\newtheorem{theorem}{Theorem}[section]
\newtheorem{proposition}[theorem]{Proposition}
\newtheorem{lemma}[theorem]{Lemma}
\newtheorem{corollary}[theorem]{Corollary}
\newtheorem{conjecture}[theorem]{Conjecture}
\theoremstyle{definition}
\newtheorem{example}[theorem]{Example}
\newtheorem{definition}[theorem]{Definition}
\newtheoremstyle{notation}
{\topsep}
{\topsep}
{}
{}
{\bfseries}
{.}
{.5em} 
{} 
\theoremstyle{notation}
\newcommand{\Des}{\text{Des}}
\newcommand{\Asc}{\text{Asc}}
\newcommand{\calR}{\mathcal{R}}
\newcommand{\calB}{\mathcal{B}}
\newcommand{\calC}{\mathcal{C}}
\newcommand{\Sym}{\mathfrak{S}}
\newcommand{\N}{\mathbb{N}}
\newcommand{\JEem}{\textit}
\begin{document}

\title[On Graphs of Sets of Reduced Words]{On Graphs of Sets of Reduced Words}
\author{Jennifer Elder} 
\address{Department of Computer Science, Math \& Physics. Missouri Western State University, 4525 Downs Drive, St. Joseph, MO 64507 }
\email{\textcolor{blue}{\href{mailto:jelder8@missouriwestern.edu}{jelder8@missouriwestern.edu}}}

\subjclass{Primary: 05A05, Secondary, 05C12}
\keywords{Permutation, reduced word, graph, the weak order lattice, recursive formula}

\begin{abstract}
Any permutation in the finite symmetric group can be written as a product of simple transpositions $s_i = (i~i+1)$. For a fixed permutation $\sigma \in \mathfrak{S}_n$ the products of minimal length are called reduced decompositions or reduced words, and the collection of all such reduced words is denoted $\mathcal{R}(\sigma)$. Any reduced word of $\sigma$ can be transformed into any other by a sequence of commutation moves or long braid moves. One area of interest in these sets are the congruence classes defined by using only braid or only commutation relations. The set $\mathcal{R}(\sigma)$ can be drawn as a graph, $G(\sigma)$, where the vertices are the reduced words, and the edges denote the presence of a commutation or braid move between the words. This paper presents new work on subgraph structures in $G(\sigma)$, as well as new formulas to count the number of braid edges and commutation edges in $G(\sigma)$. We also include work on bounds for the number of braid and commutation classes in $\mathcal{R}(\sigma)$.
\end{abstract}

\maketitle

\section{Introduction}

Permutations are extremely useful and complex combinatorial objects, which has led to extensive study of their properties. This study spans general permutation statistics \cite{elder2023homomesies, FindStat}, which include patterns \cite{TENNER2007888, T1} or descent and ascent properties \cite{Billey_Permu, DIAZLOPEZ2021112375}, to decomposition into cycles and related structures \cite{R, S1, T2}. Specifically, the decomposition of permutations into products of cycles called \emph{simple transpositions}. 

\begin{definition}\label{def:sym_group}
    Let $[n] := \{1,2, \ldots n \}$, and denote the symmetric group on this set as $\mathfrak{S}_n$. A permutation $\sigma\in \Sym_n$ is written in \JEem{one line notation} as $\sigma = [\sigma_1 ~\sigma_2 ~ \ldots ~\sigma_n]$, where $\sigma(i) = \sigma_i$.

    We define the \JEem{descent set of} $\sigma$ as $\Des(\sigma ) = \{ i\in [n] \mid \sigma_i>\sigma_{i+1}\}$
    
    The group $\Sym_n$ can be generated by a set of functions called \textit{simple transpositions}: $s_i = (i~i+1)$ for $1\leq i\leq n-1$. 
  
These generators also have specific relationships:
\begin{align}
s_i^2 &= e, \\
s_i s_j &= s_j s_i, \label{eq:1} \mbox{ for } |i-j|>1 ,\\
s_i s_{i+1} s_i &= s_{i+1} s_i s_{i+1}, \label{eq:2} \mbox{ for } |i-j|=1.
\end{align}
\end{definition}

The second relation, labeled \eqref{eq:1}, is called a \JEem{commutation relation}, while the third relation, labeled \eqref{eq:2}, is called a \JEem{braid relation}. 

A minimal sequence of generators that produces $\sigma \in \mathfrak{S}_n$ is called a \JEem{reduced decomposition}. 
For example, in $\mathfrak{S}_5$, the permutation $\sigma =[42315]$ can be written as the product of length 5: $s_1s_2s_3s_2s_1= (1~2)(2~3)(3~4)(2~3)(1~2)$. For a permutation $\sigma$, we say that $s_j$ is in the \JEem{support of $\sigma$} if $s_j$ is a present in a reduced decomposition of $\sigma$. For example, $s_1$ is in the support of $[42315]$.

For simplicity, we often write $\sigma$ as a \textit{reduced word} created from the indices of a reduced decomposition of $\sigma$. In the previous example, we would produce the reduced word 12321. 

Once we have a reduced word, we wish to find all reduced words for $\sigma$. We denote the set of all reduced words as $\calR(\sigma)$
The letters of these words obey commutation relations and braid relations. These relations between reduced words, structures found in the sets $\calR (\sigma)$ and the congruence classes they produce, have been studied extensively \cite{RR, S1, T2}. As the sets become large, they become harder to study simply as sets. In the previous example, there are six reduced words for $\sigma$. This is why we create \JEem{graphs of sets of reduced words}. For example,

Figure~\ref{fig:graphofsigma} shows the graph $G([42315])$. 

\begin{figure}[H]
\begin{center}
\begin{tikzpicture}[node distance=2cm]
\node(v1)	 			            	{$12321$};
\node(v2)       [right of=v1] 		    {$13231$};
\node(v3)      [below right of=v2]  	{$31231$};
\node(v4)      [above right of=v2]  	{$13213$};
\node(v5)      [below right of=v4]  	{$31213$};
\node(v6)      [right of=v5]  	        {$32123$};
\node(v0)      [left of =v1]            {$G([42315])=$};

\draw[dashed] (v1) --(v2);
\draw (v2) --(v3);
\draw (v2) --(v4);
\draw (v5) --(v4);
\draw (v5) --(v3);
\draw[dashed] (v6) --(v5);
\end{tikzpicture}
\vspace{0.1in}
\caption{The graph $G([42315])$, where solid edges denote commutation relations, and dashed edges denote braid relations.}
\label{fig:graphofsigma}
\end{center}
\end{figure}

Studying properties of graphs of sets of reduced words is a way to further understand the structure of $\calR(\sigma)$. For example, if you are interested in better understanding the congruence classes in $\calR(\sigma)$ produced by the braid and commutation relations, the graphs are a good place to start. In Figure \ref{fig:graphofsigma}, you can see the braid classes, $\calB(\sigma)$, by deleting the solid edges and studying the remaining connected components.

Our motivating question is as follows: is there an upper bound on the ratio $|\calB(\sigma)|/|\calR(\sigma)|$ for arbitrary $\sigma$? 

As we pursued an answer to this question, we looked for results that would allow us to break our graphs into smaller pieces. This led to the development of new families of subgraphs for $G(\sigma)$, new methods of breaking $\sigma$ into smaller pieces based on its descent set, and bounds for the ratio for certain classes of permutations. 

In Section \ref{sec:background}, we give the standard definitions related to $\calR(\sigma)$, the weak order lattice, existing work on the size of $\calR(\sigma)$, and properties of $G(\sigma)$ that we use throughout the paper. In Section \ref{sec:notation}, we provide new definitions for when we will call two permutations $\sigma$ and $\tau$ equivalent, as well as new notation for specific families of equivalent permutations. Our results include the following:

\begin{enumerate}
    \item In Section \ref{sec:subgraphs_des}:
    \begin{enumerate}
        \item We create a subgraph partition of $G(\sigma)$ into the subgraphs related to $\sigma s_i$, for $i\in~\Des(\sigma)$, (Theorem \ref{graph_partition}).
        \item We produce new recursive formulas for the number of braid and commutation edges in $G(\sigma)$, (Corollaries \ref{recDEG} and \ref{coro_edge2}).
    \end{enumerate}
    \item In Section \ref{sec:shuffle_sub}:
    \begin{enumerate}
        \item We provide results on writing reduced decompositions of $\sigma$ given consecutive elements in $\Des(\sigma)$ (Lemma \ref{breakingPerms}).
        \item We characterization for the permutations with a fixed descent set with minimal length (Theorem \ref{thm:descentsetbreak}, Corollary \ref{cor:minimalperm}).
    \end{enumerate}
    \item In Section \ref{subsec:shuffles}:
    \begin{enumerate}
        \item We create a partition of $V(G(\sigma))$ into subsets related to $\tau^{-1}\sigma$, where $\tau \leq_W \sigma$, and $\tau^{-1}\sigma$ has a descent set containing only consecutive elements, (Theorem \ref{thm:spliting_perms}).
        \item We provide characterizations for when these graphs have ``nice" structures (Corollary \ref{cor:nice_shuffle_graphs}).
    \end{enumerate}
    \item In Section \ref{sec:motivation}:
    \begin{enumerate}
    \item We discuss the motivating problem that led to these families of subgraphs.
    \item We prove bounds for the ratio $|\calB(\sigma)|/|\calR(\sigma)|$ in the special case where $\sigma$ is the longest permutation, $w_0$ (Theorem \ref{weakerV}).
        \item We prove cases of the motivating problem using the subgraph structures (Lemma \ref{casesproved}, Theorem \ref{mainresult}).
    \end{enumerate}
\end{enumerate}
We conclude in Section \ref{sec:future} with a discussion of future directions for this research.

\section{Preliminaries} \label{sec:background}

In Definition \ref{def:sym_group}, we defined the symmetric group, descents, and the simple transpositions.  

\begin{definition}
    An \JEem{inversion} is any pair in the one line notation of $\sigma$ such that $\sigma_i>\sigma_j$ where $i<j$. The number of inversions of $\sigma$ is the length of $\sigma$, denoted $\ell(\sigma)$.

    The identity $e=[1 ~2~\ldots ~n]$ has the shortest length with zero inversions, while the permutation $w_0~=~[n~n-1~\ldots 2~1]$ has the maximum length with $\frac{n(n-1)}{2}$ inversions.
\end{definition}

In fact, it is known that the length of any reduced decomposition of $\sigma$ be equal to $\ell(\sigma)$. For example, $\sigma = [4~2~3~1~5]$ has five inversions, and as we saw earlier, the length of a reduced decomposition of $\sigma$ was also five.

\begin{definition}
    Let $\sigma = [\sigma_1\cdots \sigma_n] \in \mathfrak{S}_n$. A \emph{descent} occurs in position $i$ whenever $\sigma_i>\sigma_{i+1}$. We denote the set of all such indices $i$ as $\Des(\sigma)$.
    An \emph{ascent} occurs in position $j$ whenever $\sigma_j<\sigma_{j+1}$. We denote the set of all such indices $j$ as $\Asc(\sigma)$
\end{definition}

\subsection{The weak order}
The generating relations for $\mathfrak{S}_n$ can also be used to produce the lattice for the weak order poset of $\mathfrak{S}_n$.
\begin{definition}
The \JEem{right weak order poset} is a partial order defined on $\mathfrak{S}_n$, denoted $W(\mathfrak{S}_n)$. $W(\mathfrak{S}_n)$ is a bounded lattice for all $n\geq 2$. The minimal element in this lattice is the identity, while the maximal element is $w_0 = [n~n-1~\ldots 2~1]$.
\end{definition}

The cover relations are defined on the addition or removal of a single simple transposition on the right hand side. That is, $\tau \lessdot \sigma$ if and only if $\tau s_i =\sigma$ for some $i\in \Des(\sigma)$. Then $\ell (\tau) = \ell (\sigma) -1$.

In general, if $\tau \leq \sigma$, then there exists a collection of simple transpositions such that $\tau s_{i_1}\ldots s_{i_k} = \sigma$, where $\ell (\tau) = \ell(\sigma) - k$. 

For $\sigma \in \mathfrak{S}_n$, the set $\Des(\sigma)$ gives us all the elements that $\sigma$ covers in $W(\mathfrak{S}_n)$. That is, the set of all elements covered by a permutation $\sigma$ is $\{ \sigma s_i \mid i\in \Des(\sigma)\}$.


Much work has been done on subposet structures in this lattice. For example, the number of 4-cycles in $W(\mathfrak{S}_n)$ is known (OEIS \href{https://oeis.org/A317487}{A317487}). In Figure \ref{fig:S4}, we have the weak order of $\mathfrak{S}_4$, with one such 4-cycle highlighted.

\begin{figure} \label{fig:WS}
\begin{center}
\includegraphics[height=3.5in]{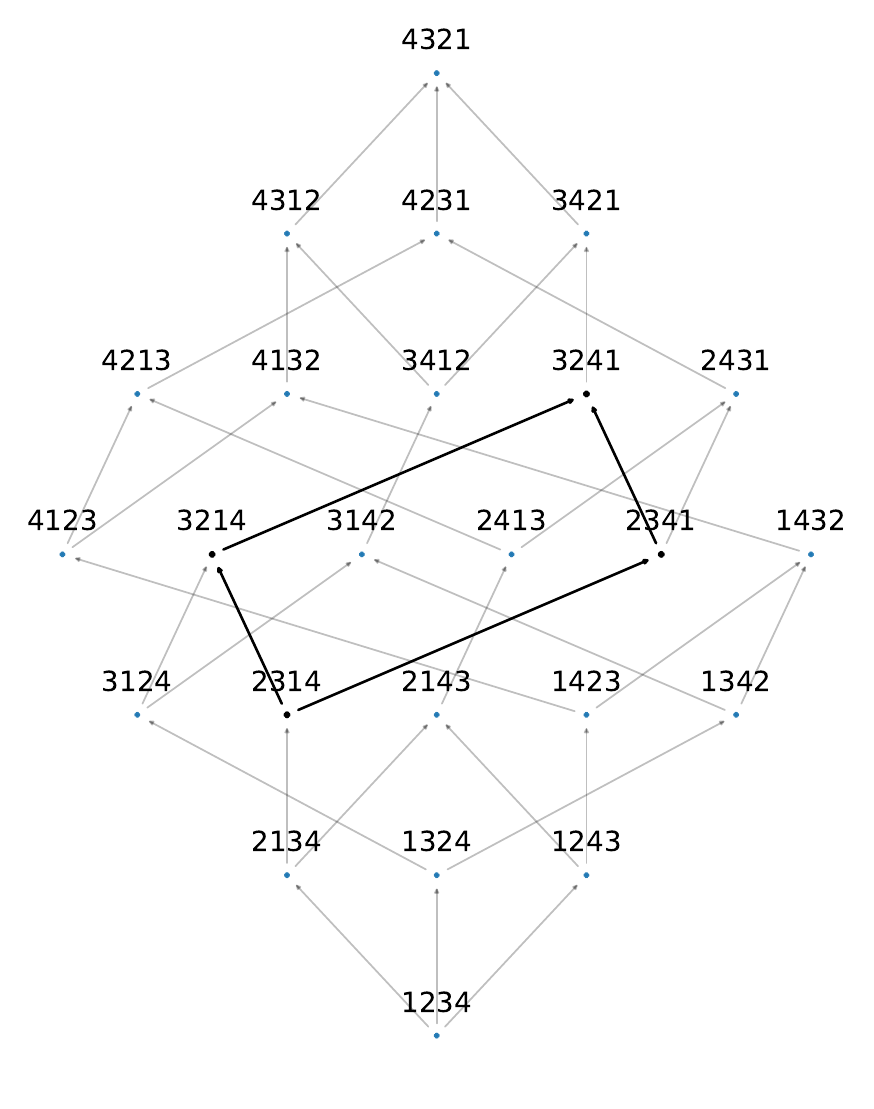}
  \caption{ The right weak order lattice for $\mathfrak{S}_4$.}
\label{fig:S4}
\end{center}
\end{figure}

Similarly, the number of 6-cycles in $W(\mathfrak{S}_n)$ is well known (OEIS \href{https://oeis.org/A317486}{A317486}). These results are unsurprising, as a 4-cycle is produced whenever $i,j\in \Des (\sigma)$ are such that $|i-j|>$, and a 6-cycle is produced whenever $i,i+1\in \Des(\sigma)$.

In \cite{Tenner_Intervals}, Tenner uses poset intervals to discuss properties of permutations and the related subposets in $W(\Sym_n)$. For example,
\begin{theorem}[\cite{Tenner_Intervals}, Corollary 4.4]
    Let $[v,w]$ be in interval in the weak order. Then $[v,w]$ is a Boolean poset if and only if $v^{-1}w$ is equal to a product of commuting generators.
\end{theorem}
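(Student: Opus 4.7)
The plan is to first reduce to the case $v=e$ using the standard fact that the right weak order is invariant under left multiplication by $v^{-1}$, so that the map $u \mapsto v^{-1}u$ gives an isomorphism of posets $[v,w] \cong [e, v^{-1}w]$. This is a short check: if $v \le u \le w$, then $\ell(v^{-1}u) = \ell(u)-\ell(v)$ and $(v^{-1}u)^{-1}(v^{-1}w) = u^{-1}w$ has length $\ell(w)-\ell(u)$, which is exactly what is needed for $v^{-1}u \le v^{-1}w$, and order is preserved on both sides. Setting $\sigma := v^{-1}w$, it then suffices to show that $[e,\sigma]$ is Boolean if and only if $\sigma$ is a product of pairwise commuting simple transpositions.

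For the direction ($\Leftarrow$), suppose $\sigma = s_{i_1}\cdots s_{i_k}$ with the $s_{i_j}$ pairwise commuting (so in particular $|i_a-i_b|>1$ for $a \ne b$ and $\ell(\sigma)=k$). For each subset $S \subseteq [k]$, the product $\pi_S := \prod_{j \in S} s_{i_j}$ is well-defined and reduced, and a direct check shows $\pi_S \le \sigma$ via the chain obtained by adjoining one factor at a time (each step increases length by $1$ because the new index is distinct from those already used and may be moved into a descent position by commutation). Conversely, every $u \le \sigma$ must be of this form, since every reduced word of $\sigma$ is a permutation of $i_1,\dots,i_k$ and hence every prefix yields some $\pi_S$. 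The assignment $S \mapsto \pi_S$ is then an isomorphism $2^{[k]} \cong [e,\sigma]$.

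For the direction ($\Rightarrow$), assume $[e,\sigma]$ is Boolean of rank $k=\ell(\sigma)$. Then $[e,\sigma]$ has exactly $k$ atoms, and every atom of $[e,\sigma]$ has the form $s_i$ for some $i$ with $s_i \le \sigma$; call these atoms $s_{i_1},\dots,s_{i_k}$. Because $\sigma$ is the top element of a Boolean poset of rank $k$, it is the join of all atoms: $\sigma = s_{i_1} \vee \cdots \vee s_{i_k}$. The crux is that in a Boolean poset of rank $\ge 2$, the join of any two distinct atoms is an element of rank exactly $2$. So for each pair $a \ne b$, the join $s_{i_a} \vee s_{i_b}$ in $W(\Sym_n)$ must have length $2$. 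The key computation is the explicit join of two simple transpositions in the weak order: if $|i_a - i_b| > 1$ then $s_{i_a}$ and $s_{i_b}$ commute and their join is $s_{i_a}s_{i_b}$ of length $2$; if $|i_a-i_b|=1$ then one verifies, using the braid relation, that their join is $s_{i_a}s_{i_b}s_{i_a}=s_{i_b}s_{i_a}s_{i_b}$, of length $3$. Boolean-ness therefore forces $|i_a - i_b| > 1$ for every pair, i.e.\ the atoms pairwise commute. Since commuting generators join to their product, we conclude $\sigma = s_{i_1}\cdots s_{i_k}$, a product of commuting generators.

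The main technical obstacle is the join computation in the last paragraph: one must know that, in $W(\Sym_n)$, the join of $s_i$ and $s_{i+1}$ is $s_is_{i+1}s_i$ (equivalently, that no element of length $2$ dominates both $s_i$ and $s_{i+1}$). This is a standard consequence of the two cover relations available from $s_is_{i+1}$ and $s_{i+1}s_i$ together with the braid relation, but it is the step that genuinely uses the Coxeter structure; everything else is bookkeeping with lengths and the bijection $[v,w] \cong [e,v^{-1}w]$. A minor additional subtlety is to ensure that the atoms $s_{i_1},\dots,s_{i_k}$ of $[e,\sigma]$ are distinct simple transpositions, which is automatic since simple transpositions are distinct for distinct indices and $[e,\sigma]$ has $k$ distinct atoms.
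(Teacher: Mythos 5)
This statement is quoted in the paper from Tenner's work (cited as Corollary 4.4 of \cite{Tenner_Intervals}) and is not proved there, so there is no in-paper argument to compare yours against; I can only assess your proof on its own terms, and it is essentially correct and is the standard argument. The reduction $[v,w]\cong[e,v^{-1}w]$ via $u\mapsto v^{-1}u$ is valid for the right weak order (the surjectivity onto $[e,v^{-1}w]$, which you pass over quickly, follows from the subadditivity of length exactly as in your forward check), and you correctly isolate the one step that uses the Coxeter structure: no length-two element lies above both $s_i$ and $s_{i+1}$, so adjacent atoms would force a rank-two join of length three, contradicting Booleanness. Two points deserve a sentence more than you give them. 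First, in the forward direction you should say why the join of two atoms of the abstract Boolean poset $[e,\sigma]$ is witnessed by a length-two element of $W(\Sym_n)$; your parenthetical reformulation (``no element of length $2$ dominates both'') is the right way to phrase it, since any rank-two element of the interval above both atoms is already a length-two common upper bound in the ambient lattice, so you never need to identify the two joins. Likewise ``$\sigma$ is the join of all atoms, and commuting generators join to their product'' is most cleanly closed off by the length count $\ell(\sigma)=k=\ell(s_{i_1}\cdots s_{i_k})$ together with $\sigma\leq s_{i_1}\cdots s_{i_k}$. Second, in the backward direction the bijection $S\mapsto\pi_S$ must also be checked to be order-reflecting ($\pi_S\leq\pi_T$ forces $S\subseteq T$, e.g.\ because $\pi_S^{-1}\pi_T=\pi_{S\triangle T}$ and lengths must add); as written you only establish a bijection that preserves order in one direction. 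Both are routine, but they are the difference between a sketch and a proof.
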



\subsection{Sets of reduced words} The following recursions are well known results for sets of reduced words and braid classes. 

\begin{theorem}\label{redrec}
For $\sigma\in \mathfrak{S}_n$, 
\[
|\calR(\sigma)| = \sum_{i\in \Des(\sigma)} |\calR(\sigma s_i)|. 
\]
\end{theorem}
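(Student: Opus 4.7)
The plan is to prove this by a straightforward bijection based on the last letter of a reduced word. Specifically, every reduced decomposition $s_{a_1} s_{a_2} \cdots s_{a_\ell}$ of $\sigma$ (with $\ell = \ell(\sigma)$) has a well-defined final letter $s_{a_\ell}$, and I will use this final letter to sort $\calR(\sigma)$ into the pieces indexed by $\Des(\sigma)$.

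First I would recall (or prove in a short lemma) the standard exchange fact: for any simple transposition $s_i$, the length $\ell(\sigma s_i)$ equals $\ell(\sigma) - 1$ when $i \in \Des(\sigma)$ and $\ell(\sigma) + 1$ otherwise. This is the key ingredient: if $s_{a_1} \cdots s_{a_\ell}$ is a reduced decomposition of $\sigma$, then setting $\tau = \sigma s_{a_\ell}$ forces $\ell(\tau) = \ell - 1$, so $a_\ell \in \Des(\sigma)$, and moreover $s_{a_1} \cdots s_{a_{\ell-1}}$ is necessarily a reduced decomposition of $\tau = \sigma s_{a_\ell}$.

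Next I would define the map
\[
\Phi \co \calR(\sigma) \to \bigsqcup_{i \in \Des(\sigma)} \calR(\sigma s_i), \qquad a_1 a_2 \cdots a_\ell \mapsto (a_\ell,\; a_1 a_2 \cdots a_{\ell-1}),
\]
which is well-defined by the preceding paragraph. In the reverse direction, for each $i \in \Des(\sigma)$ and each reduced word $b_1 \cdots b_{\ell-1} \in \calR(\sigma s_i)$, appending $i$ yields a word $b_1 \cdots b_{\ell-1} i$ of length $\ell(\sigma)$ that spells out $\sigma s_i \cdot s_i = \sigma$, hence lies in $\calR(\sigma)$. This gives an explicit inverse $\Psi$ to $\Phi$, and $\Phi \circ \Psi$ and $\Psi \circ \Phi$ are clearly the identity, so $\Phi$ is a bijection. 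Taking cardinalities and using disjointness of the right-hand union (since the last letter determines which summand a word belongs to) yields the claimed identity.

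The proof is essentially bookkeeping once the length/descent exchange lemma is in hand, so I do not expect any serious obstacle; the only point that deserves care is justifying that removing the last letter of a reduced decomposition still leaves a reduced decomposition, which is exactly what the exchange characterization of descents provides.
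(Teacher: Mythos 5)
Your argument is correct: the last-letter bijection $\Phi$, together with the exchange/length fact that $\ell(\sigma s_i)=\ell(\sigma)-1$ exactly when $i\in\Des(\sigma)$, is the standard and complete proof of this recursion. Note that the paper itself states Theorem \ref{redrec} without proof, citing it as a well-known result, so there is no in-paper argument to compare against; your proposal supplies exactly the classical justification one would expect, and the only step needing care --- that truncating a reduced word leaves a reduced word of $\sigma s_{a_\ell}$ --- is handled correctly by the length count.
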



\begin{theorem}\label{Braidrec}
For $\sigma \in \mathfrak{S}_n$,
\[
|\calB(\sigma)| =\left ( \sum_{i\in \Des(\sigma)}|\calB(\sigma s_i)| \right )-  \sum_{i, i+1 \in \Des(\sigma)} |\calB(\sigma s_is_{i+1}s_i)|.
\]
\end{theorem}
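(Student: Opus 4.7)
The plan is to mirror the proof of Theorem~\ref{redrec}, which groups reduced words by their final letter and uses the bijection between reduced words of $\sigma$ ending in $i$ and $\calR(\sigma s_i)$. For braid classes, the complication is that a single braid class of $\calB(\sigma)$ can contain words ending in several different letters, and the correction term $\sum_{i,i+1\in\Des(\sigma)}|\calB(\sigma s_i s_{i+1} s_i)|$ should record exactly this overcount.

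The structural input is that a braid move acts on three consecutive positions, so any such move whose active window does not include the final position fixes the last letter of the word; only the ``end braid'' applied to positions $\ell(\sigma)-2,\ell(\sigma)-1,\ell(\sigma)$ can change the last letter, and when it does it toggles between some $a$ and $a+1$. For each $B\in \calB(\sigma)$ set $L(B):=\{i\in\Des(\sigma):\text{some word of }B\text{ ends in }i\}$. Any chain of braid moves joining two words of $B$ traces a walk on $\Z$ in final letters with steps in $\{-1,0,+1\}$, so $L(B)$ is always a set of consecutive integers. For each $i\in\Des(\sigma)$ I would define $\phi_i\co \calB(\sigma s_i)\to\calB(\sigma)$ by sending $b$ to the braid class containing $b\cdot i := \{w\cdot i:w\in b\}$; this is well defined because any braid move on $w\in b$ lifts to one on $w\cdot i$, and the image of $\phi_i$ equals $\{B:i\in L(B)\}$. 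The key step is to show each $\phi_i$ is injective, equivalently: if $u i$ and $v i$ are braid-equivalent in $\calR(\sigma)$, then $u,v$ are braid-equivalent in $\calR(\sigma s_i)$. I would prove this by induction on the number of end-braids in a chain of moves joining $u i$ and $v i$, exploiting the fact that during any excursion through words ending in $i\pm 1,i\pm 2,\dots$, the requirement that all intermediate words remain reduced (forbidding factors such as $i(i{+}1)i(i{+}1)$) forces each braid move near the tail to act entirely on the prefix, and hence to descend to a braid move on the stripped word.

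Once $\phi_i$ is known to be a bijection onto its image, one has $\sum_{i\in\Des(\sigma)}|\calB(\sigma s_i)| = \sum_B |L(B)|$. The analogous construction at the triple $s_i s_{i+1}s_i$ (well defined because $i,i+1\in\Des(\sigma)$ forces $\sigma_i>\sigma_{i+1}>\sigma_{i+2}$) provides a bijection between $\calB(\sigma s_i s_{i+1} s_i)$ and $\{B:\{i,i+1\}\subseteq L(B)\}$: a braid class $b$ maps to the class containing $b\cdot i(i{+}1)i$, which via a single end-braid also contains $b\cdot (i{+}1)i(i{+}1)$, so both $i$ and $i+1$ lie in the $L$-set of this class. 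Summing gives $\sum_{i,i+1\in\Des(\sigma)}|\calB(\sigma s_i s_{i+1} s_i)|=\sum_B|\{i:i,i+1\in L(B)\}|=\sum_B(|L(B)|-1)$, where the last equality uses the consecutiveness of $L(B)$; subtracting the two identities yields $\sum_B 1=|\calB(\sigma)|$. The hard part is the injectivity of $\phi_i$ (and the parallel statement for the triple), which reduces to a careful reducedness-based case analysis of how end-braids can interact with moves at positions $\ell(\sigma)-3$ and $\ell(\sigma)-4$; the rest is bookkeeping.
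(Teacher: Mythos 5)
You should first be aware that the paper offers no proof of this statement at all: it is quoted as a well-known recursion (it is established in the Fishel--Mili\'cevi\'c--Patrias--Tenner paper cited here as \cite{F}), so there is no in-paper argument to compare against. Judged on its own terms, your counting framework is correct and is essentially the standard one: last letters of reduced words of $\sigma$ lie in $\Des(\sigma)$; only a braid move occupying the final three positions can change the last letter, and then only by $\pm 1$; hence the set $L(B)$ of last letters occurring in a braid class $B$ is a nonempty interval contained in $\Des(\sigma)$. Granting that $\phi_i$ and the triple-suffix map are bijections onto $\{B: i\in L(B)\}$ and $\{B:\{i,i+1\}\subseteq L(B)\}$ respectively, the identity $\sum_B |L(B)|-\sum_B\bigl(|L(B)|-1\bigr)=|\calB(\sigma)|$ gives exactly the claimed recursion, and the well-definedness and surjectivity of both maps are as routine as you indicate.

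The gap is that the entire content of the theorem is concentrated in the injectivity of $\phi_i$ (and of the analogous map for the suffix $i(i{+}1)i$), and you have only gestured at it. Note that without injectivity you do not even recover an inequality: the argument only yields $\sum_i|\calB(\sigma s_i)|\geq \sum_B|L(B)|$ and $\sum_{i,i+1}|\calB(\sigma s_is_{i+1}s_i)|\geq \sum_B\bigl(|L(B)|-1\bigr)$, and subtracting two inequalities pointing the same way gives nothing. The proposed induction on the number of end-braids must actually be carried out: you need to show that any excursion $w_1i\to\cdots\to w_2i$ through words whose last letter differs from $i$ collapses to a braid chain from $w_1$ to $w_2$ inside $\calR(\sigma s_i)$. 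That case analysis genuinely uses the reducedness constraints you mention (for instance, a word ending in $(i{+}1)\,i\,(i{+}1)$ cannot have $i$ in position $\ell(\sigma)-3$, since $i(i{+}1)i(i{+}1)$ is not reduced), and it must also handle nested excursions in which the last letter wanders beyond $i\pm1$ before returning. Until that lemma is proved, what you have is a correct and clean reduction of the theorem to a single combinatorial statement, not a complete proof.
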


There is a similar recursion for $|\calC(\sigma)|$ due to Elnitsky \cite{E}, but we do not use that result in this paper.

Additional work has been done by Stanley \cite{S1} which relates the size of $R(\sigma)$ to the number of standard Young tableaux of a certain size. 

\begin{theorem}[\cite{S1}, Corollary 3.1]\label{vexTab}
Let $\sigma \in \mathfrak{S}_n$. Then there exist integers $a_\lambda \geq 0$ such that 
\[
|R(\sigma )| = \sum_{\lambda \vdash \ell(\sigma)} a_\lambda f^\lambda
\]

If $\sigma$ is \emph{vexillary}, that is, 2143 pattern avoiding, there is a $\lambda \vdash \ell(\sigma)$ such that 
\[
|R(\sigma)| = f^\lambda
\]
\end{theorem}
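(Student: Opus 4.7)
The plan is to prove this via Stanley's symmetric function $F_\sigma$ in infinitely many variables. I would define
\[
F_\sigma := \sum_{a \in \calR(\sigma)} \sum_{b} x_{b_1} x_{b_2} \cdots x_{b_N},
\]
where $N = \ell(\sigma)$ and the inner sum runs over weakly increasing positive integer sequences $b = (b_1 \leq b_2 \leq \cdots \leq b_N)$ that are \emph{compatible} with $a = (a_1, \ldots, a_N)$, meaning $b_i < b_{i+1}$ whenever $a_i < a_{i+1}$. The target identity is $F_\sigma = \sum_{\lambda \vdash N} a_\lambda s_\lambda$ with $a_\lambda \in \Z_{\geq 0}$, from which both claims fall out by extracting a single monomial coefficient.

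First, I would prove that $F_\sigma$ lies in the ring of symmetric functions. Following Stanley's original strategy, this is done by producing an involution on pairs $(a,b)$ that interchanges the multiplicities of $k$ and $k+1$ in $b$; the nontrivial part of the involution modifies $a$ locally by a braid or commutation move so that the modified word still lies in $\calR(\sigma)$. Next, I would establish Schur positivity via the Edelman--Greene insertion, which furnishes a bijection between pairs $(a,b)$ and pairs $(P,Q)$ where $P$ is an EG tableau recording the insertion and $Q$ is a semistandard Young tableau of the same shape with content prescribed by $b$. Grouping over the common shape yields the expansion and identifies $a_\lambda$ as the number of EG tableaux of shape $\lambda$ arising from reduced words of $\sigma$.

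With symmetry and Schur positivity in hand, the first claim follows by reading off the coefficient of $x_1 x_2 \cdots x_N$ on both sides of $F_\sigma = \sum_\lambda a_\lambda s_\lambda$. On the left, the only weakly increasing sequence $b$ that yields this monomial is $b=(1,2,\ldots,N)$, and every reduced word is compatible with a strictly increasing $b$, so $[x_1 \cdots x_N]\, F_\sigma = |\calR(\sigma)|$. On the right, the classical identity $[x_1 \cdots x_N]\, s_\lambda = f^\lambda$ (SSYT of shape $\lambda$ and content $(1^N)$ are exactly SYT) gives the desired sum. For the vexillary case, I would invoke the sharper statement that when $\sigma$ avoids the pattern $2143$, every EG tableau produced from $\calR(\sigma)$ has the same shape $\lambda = \lambda(\sigma)$, collapsing the sum to $F_\sigma = s_{\lambda(\sigma)}$; the same coefficient extraction then yields $|\calR(\sigma)| = f^{\lambda(\sigma)}$.

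The main obstacle is the combined symmetry/Schur-positivity step. Proving $F_\sigma$ symmetric requires a carefully designed local involution whose correctness rests on casework for the interaction of adjacent letters of $a$ through commutation versus braid moves, and Schur positivity requires constructing the Edelman--Greene insertion together with its invariance under Coxeter--Knuth equivalence on $\calR(\sigma)$. The vexillary refinement adds the further hurdle of showing that $2143$-avoidance is \emph{exactly} the condition forcing a constant EG shape on $\calR(\sigma)$; this is typically established by relating $\lambda(\sigma)$ to the sorted code of $\sigma$ and tracking how occurrences of the forbidden pattern create EG tableaux of distinct shapes.
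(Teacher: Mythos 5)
This statement is quoted in the paper as a known result of Stanley (\cite{S1}, Corollary 3.1) and is not proved there, so there is no in-paper argument to compare against; your outline is the standard proof from the literature. The plan is correct: symmetry of $F_\sigma$ via Stanley's local involution, Schur positivity via Edelman--Greene insertion with $a_\lambda$ counting EG tableaux of shape $\lambda$, extraction of the coefficient of $x_1x_2\cdots x_{\ell(\sigma)}$ using $[x_1\cdots x_N]\,s_\lambda = f^\lambda$, and the collapse to a single Schur function in the vexillary case. Be aware, though, that each of the three pillars you name (the symmetry involution, the Edelman--Greene correspondence together with its invariance under Coxeter--Knuth equivalence, and the constancy of the EG shape for $2143$-avoiding permutations) is itself a substantial theorem that your write-up defers rather than proves, so what you have is an accurate reduction to known results rather than a self-contained argument. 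Note also that the statement only requires the forward implication in the vexillary case, so you need not show that $2143$-avoidance is \emph{exactly} the condition forcing a constant EG shape.
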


The following result from Fishel, Milićević, Patrias, and Tenner is used repeatedly in Section \ref{sec:motivation}, as it gives a relationship between $\calR(\sigma)$ and the sets of congruence classes.

\begin{theorem}[\cite{F}, Theorem 3.6]\label{bound1}
For any permutation $\sigma$,
\[
|\calB(\sigma)|+|\calC(\sigma)|-1\leq |\calR(\sigma)|\leq |\calB(\sigma)|\cdot |\calC(\sigma)|.
\]
\end{theorem}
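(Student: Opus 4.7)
The plan is to encode the counts in an auxiliary bipartite multigraph $H$ with vertex set $\calB(\sigma) \sqcup \calC(\sigma)$, in which each $w \in \calR(\sigma)$ contributes one edge joining $[w]_B$ to $[w]_C$. Then $|V(H)| = |\calB(\sigma)| + |\calC(\sigma)|$ and $|E(H)| = |\calR(\sigma)|$, so the two inequalities amount to the graph-theoretic claims that $H$ is connected (giving $|E(H)| \geq |V(H)| - 1$) and that $H$ has no multi-edges (giving $|E(H)| \leq |\calB(\sigma)| \cdot |\calC(\sigma)|$).

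Connectedness of $H$ I would deduce from connectedness of $G(\sigma)$. Given vertices $X, Y$ of $H$, choose representative reduced words $u \in X$ and $u' \in Y$ and a path $u = w_0, w_1, \ldots, w_\ell = u'$ in $G(\sigma)$. Each braid edge $w_{i-1} \sim w_i$ forces the edges of $H$ corresponding to $w_{i-1}$ and $w_i$ to share a braid-class endpoint; each commutation edge forces them to share a commutation-class endpoint. Hence consecutive edges of $H$ lie in the same component, so $X$ and $Y$ do too, and the lower bound drops out of the standard fact that a connected graph has at least $|V|-1$ edges.

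For the upper bound, I would show that the map $\phi \colon \calR(\sigma) \to \calB(\sigma) \times \calC(\sigma)$, $\phi(w) = ([w]_B, [w]_C)$, is injective. The guiding heuristic is that commutation moves preserve a reduced word's letter multiset, while each braid move swaps one $i$ for an $i+1$ (or vice versa); so two reduced words in the same commutation class have identical letter multisets, and if they are also in the same braid class then the connecting sequence of braid moves must have its letter-changes cancel out. The main obstacle is upgrading this multiset cancellation to the stronger conclusion that the braid path is actually trivial (so $w = w'$), rather than merely returning to the same multiset. I would approach this by induction on $\ell(\sigma)$, using Theorem~\ref{redrec} to pass to $\sigma s_i$ for some $i \in \Des(\sigma)$ and analyzing how an interleaved sequence of braid and commutation moves must reconcile at a chosen letter position. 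Granted injectivity, the simple bipartite graph $H$ has at most $|\calB(\sigma)| \cdot |\calC(\sigma)|$ edges, completing the theorem.
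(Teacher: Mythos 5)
The paper does not prove this statement; it is quoted from \cite{F} (Theorem 3.6), so there is no internal argument to compare yours against, and your proposal has to stand on its own. Your treatment of the lower bound does: the bipartite multigraph $H$ on $\calB(\sigma)\sqcup\calC(\sigma)$ with one edge per reduced word is connected because $G(\sigma)$ is connected (Matsumoto/Tits), since a braid edge of $G(\sigma)$ forces the two corresponding edges of $H$ to share their braid-class endpoint and a commutation edge forces them to share their commutation-class endpoint; a connected multigraph on $|\calB(\sigma)|+|\calC(\sigma)|$ vertices has at least $|\calB(\sigma)|+|\calC(\sigma)|-1$ edges. That half is correct.

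The upper bound, however, contains a genuine gap. The inequality $|\calR(\sigma)|\leq |\calB(\sigma)|\cdot|\calC(\sigma)|$ is exactly the statement that $\phi(w)=([w]_B,[w]_C)$ is injective, i.e., that a braid class and a commutation class meet in at most one reduced word, and this is precisely the step you defer with ``granted injectivity.'' The multiset heuristic cannot be upgraded along the lines you sketch: knowing that a chain of braid moves returns to the original letter multiset is very far from knowing the chain is trivial (for instance, an $i(i{+}1)i \to (i{+}1)i(i{+}1)$ move at one location followed by a move in the opposite direction on the same value pair at another location preserves the multiset while changing the word), and membership in a common commutation class is a strictly finer constraint than having equal multisets. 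What a real proof needs is a finer commutation-class invariant --- e.g., that for every $i$ the subword of letters with values in $\{i,i+1\}$ is constant on a commutation class (equivalently, the heap/poset encoding of the class) --- together with an argument that a nontrivial braid-move path cannot restore all of these invariants simultaneously. Your proposed induction on $\ell(\sigma)$ via Theorem \ref{redrec} is not developed enough to see that it closes this: passing to $\sigma s_i$ strips the last letter, but the two words you are comparing need not end in the same letter, and controlling how interleaved braid and commutation moves ``reconcile at a chosen letter position'' is exactly the hard content of the theorem, not a routine reduction. As written, the upper half of the statement is assumed rather than proved.
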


This theorem is what allows us to form Conjectures \ref{braidclassratio} and \ref{commclassratio}, and is used throughout the proof of Lemma \ref{casesproved} and Theorem \ref{mainresult}.

\subsection{Graphs of sets of reduced words} First, we give the formal definition of the graph $G(\sigma)$.

\begin{definition} 
Let $\sigma\in \mathfrak{S}_n$. We produce an undirected graph whose vertex set is $\calR(\sigma)$. If two words are associated by commutation moves, we give a solid edge in the graph, and if they are associated by braid moves, we give a dashed edge. We call this graph $G(\sigma)$.
\end{definition}

These edges give us straight forward methods of seeing the commutation and braid classes in the set $\calR(\sigma)$.

\begin{definition} 
Let $r_1, r_2 \in \calR(\sigma )$ for some $\sigma \in \mathfrak{S}_n$. We say that $r_1$ and $r_2$  are in the same commutation class if we can perform a series of commutation moves on $r_1$ to produce $r_2$, and vice versa. We denote the collection of commutation classes of $\sigma$ as $\calC(\sigma)$. Similarly, we say that $r_1$ and $r_2$  are in the same braid class if we can perform a series of braid moves on $r_1$ to produce $r_2$, and vice versa.  We denote the collection of braid classes of $\sigma$ as $\calB(\sigma)$.
\end{definition}

We are also interested in graphs constructed from these congruence classes, so we need some notation for those as well.

\begin{definition}\label{def:com_and_br_graphs}
    Let $\sigma\in \mathfrak{S}_n$, and consider $G(\sigma)$. 
    \begin{enumerate}
        \item The graph $G'_\calB(\sigma)$ is defined on the vertex set $V(G(\sigma))$ and the edge set, denoted $E_\calB$, is formed from the braid edges of $G(\sigma)$.
        \item The graph $G'_\calC(\sigma)$ is defined on the vertex set $V(G(\sigma))$ and the edge set, denoted $E_\calC$, is formed from the commutation edges of $G(\sigma)$.
    \end{enumerate}
\end{definition}

Note that these are not the same as the traditional congruence class graphs, where the vertices are the classes, which are formed from graph theoretic contraction and deletion procedures. 

The following result from Reiner gives a relationship between the number of braid edges in $G(\sigma)$ and the size of the set $\calR(\sigma)$.

\begin{theorem}[\cite{R}, Theorem 1]\label{ReinerBraid}
Let $d_\calB(v)$ be the number of braid edges incident to the vertex $v$ in $G(\sigma)$. For $\sigma =w_0\in \mathfrak{S}_n$,
\[
\sum_{v\in G(\sigma)}d_\calB(v) = |\calR(\sigma)|.
\]
\end{theorem}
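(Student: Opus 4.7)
The identity says that $2|E_{\calB}(G(w_0))| = |\calR(w_0)|$, so on average each reduced word of $w_0$ is the endpoint of exactly one braid edge. The plan is to reinterpret the left-hand side as the number of marked pairs $(r,i)$, where $r = r_1 \cdots r_\ell \in \calR(w_0)$ and $(r_i, r_{i+1}, r_{i+2})$ is a braid triple of the form $j(j+1)j$ or $(j+1)j(j+1)$, and then exhibit a bijection from $\calR(w_0)$ to this set of marked pairs.

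The natural tool is the rotation map $\rho \co \calR(w_0) \to \calR(w_0)$ defined by $\rho(r_1, \ldots, r_\ell) = (r_2, \ldots, r_\ell, n - r_1)$; this is well-defined because the relation $w_0 s_{n-j} = s_j w_0$ implies that whenever $s_{r_1} s_{r_2} \cdots s_{r_\ell} = w_0$, we also have $s_{r_2} \cdots s_{r_\ell} s_{n - r_1} = w_0$. Under $\rho$, any braid triple at an internal position $i \geq 2$ of $r$ is carried to a braid triple at position $i - 1$ of $\rho(r)$, while braid triples at the extreme positions $1$ and $\ell - 2$ can be created or destroyed. I would track the total contribution to $\sum_v d_{\calB}(v)$ within each $\rho$-orbit by walking around the orbit and counting the number of times a braid triple appears at a fixed position (say position $1$), using the Dynkin-diagram involution $s_i \leftrightarrow s_{n-i}$, which is an automorphism of $\calR(w_0)$ since $w_0^{\#} = w_0$, to symmetrize the count.

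A cleaner alternative, which I suspect gives the most transparent proof, is to pass to Edelman--Greene insertion. Since $w_0$ is vexillary, Theorem \ref{vexTab} gives a bijection $\calR(w_0) \leftrightarrow \mathrm{SYT}(\delta_n)$, where $\delta_n = (n-1, n-2, \ldots, 1)$ is the staircase shape, so $|\calR(w_0)| = f^{\delta_n}$. Under this bijection, a braid triple at position $i$ of $r$ corresponds to a specific local configuration of the cells labeled $i$, $i+1$, $i+2$ in the recording tableau $Q(r)$, and $\sum_v d_{\calB}(v)$ becomes the sum, over $Q \in \mathrm{SYT}(\delta_n)$, of the number of indices $i$ where the cells $i, i+1, i+2$ of $Q$ sit in that configuration. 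The desired identity then reduces to showing that this statistic has total value $f^{\delta_n}$ over all $Q$.

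The main obstacle in either approach is the local bookkeeping. In the rotation approach, the boundary cases where braid triples appear or disappear at positions $1$ and $\ell - 2$ need careful handling; small-$n$ computations show that the net count of braid triples is not constant on individual $\rho$-orbits, so the cancellation must be orchestrated globally rather than orbit-by-orbit. In the Edelman--Greene approach, one must pin down the exact tableau configurations of the labels $i, i+1, i+2$ that encode a braid triple (as opposed to a commutation) and then perform a hook-length-type count on $\mathrm{SYT}(\delta_n)$; this step is where the distinctive shape of $\delta_n$ enters in an essential way, which is why the identity holds for $w_0$ rather than for general $\sigma$.
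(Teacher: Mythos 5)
First, note that the paper does not prove this statement at all: it is quoted as Theorem~1 of Reiner's paper \cite{R} and used as a black box, so there is no internal proof to compare against. Your proposal therefore has to stand on its own as a proof of Reiner's theorem, and as written it does not: it is an outline of two strategies, each of which stops exactly at the step that constitutes the entire content of the result. In the rotation approach, you correctly define the cyclic shift $\rho(r_1,\ldots,r_\ell)=(r_2,\ldots,r_\ell,n-r_1)$ and correctly observe that it preserves $\calR(w_0)$ because $w_0 s_j w_0 = s_{n-j}$; but you then concede that the number of braid triples is not constant on $\rho$-orbits and that the "cancellation must be orchestrated globally," without saying how. That is not a fixable bookkeeping issue deferred to the reader --- it is an admission that the proposed invariant fails and that you do not have a replacement. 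As it stands, this branch of the argument would not go through.

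The second branch is closer to a real proof, and is in fact essentially the route Reiner takes: pass through Edelman--Greene insertion, use that $w_0$ is vexillary of staircase shape $\delta_n=(n-1,n-2,\ldots,1)$ so that $|\calR(w_0)|=f^{\delta_n}$, and convert "braid move at position $i$" into a local condition on the cells labelled $i,i+1,i+2$ of the recording tableau. But the two steps you defer --- (i) identifying precisely which configurations of three consecutive labels in a staircase SYT correspond to a braid triple rather than a commutation or nothing, and (ii) proving that the total number of such configurations, summed over all positions and all $Q$, equals $f^{\delta_n}$ --- are the theorem. Step (ii) in particular requires an actual enumeration (Reiner does it by reducing to counts of tableaux of related shapes and a hook-length computation specific to the staircase), and nothing in your write-up indicates how that count would be carried out or why it collapses to exactly $f^{\delta_n}$. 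So the proposal correctly locates the right machinery but contains a genuine gap: the decisive counting argument is absent in both approaches.
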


Theorem \ref{ReinerBraid} was used by Tenner in \cite{T2} to find the average number of commutation edges incident to any vertex in $G(w_0)$. And Schilling et al. \cite{Sch} used this result to find the average number of braid edges that are incident to the connected components in $G(w_0)$ once the braid edges are deleted. We use this result to provide bounds on the number of braid edges in $G(\sigma)$ for more general classes of permutations.

\section{New notation related to equivalence of graphs}\label{sec:notation}

Throughout the paper, we look at isomorphic graphs that are produced by distinct permutations. We characterize these permutations as being \JEem{equivalent}. 

\begin{definition}
Let $\sigma, \tau \in 
\mathfrak{S}_n$ be such that $\ell(\sigma)=\ell(\tau)=l$. Let $s_{i_1}s_{i_2}\ldots s_{i_l}$ be a reduced decomposition of $\sigma$. We say that $\sigma$ and $\tau$ are \JEem{equivalent} if there exists a reduced decomposition of $\tau$, $s_{j_1}s_{j_2}\ldots s_{j_l}$ such that $i_a-i_{a+1} = j_a-j_{a+1}$ for all $1\leq a<l$. The existence of one such pair of matched decompositions means that there is a way to match both sets of reduced words with each other, and that this equivalence is on the sets of reduced words for the two permutations.
\end{definition}

For example, $\tau =[321456]$ and $\sigma = [123654]$ are equivalent permutations because we can match 5 with 2 and 4 with 1 in order to get a matching between the reduced decompositions $s_2s_1s_2$ and $s_5s_4s_5$. This also extends to the other elements in $\calR(\tau)$ and $\calR(\sigma)$. Alternately, we can see that they are equivalent permutations by viewing their graphs in Figure \ref{fig:equiv_graphs}.
\begin{figure}[H]
\begin{center}
\begin{tikzpicture}[node distance=2cm]
\node(v1)	 			            	{$121$};
\node(v2)       [right of=v1] 		    {$212$};
\node(v3)      [right of=v2]  	{$454$};
\node(v4)      [right of=v3]  	{ $545$};

\draw[dashed] (v1) --(v2);
\draw[dashed] (v3) --(v4);
\end{tikzpicture}
\vspace{0.1in}
\caption{The graphs of $G([321456])$ and $G([123654])$.}
\label{fig:equiv_graphs}
\end{center}
\end{figure}

We are particularly interested in the longest element in $\Sym_n$, and the family of permutations that are equivalent to the longest elements in some $\Sym_k$. We define this family of permutations next.

\begin{definition}\label{w0equiv}
Let $w_0^{(k,i)}$ be defined as follows
\[ 
w_0^{(k,i)}=\left ( \begin{array}{cccccccccc} 1 & \ldots & i-1 & i & i+1 & \ldots & i+k-1 & i+k & \ldots n \\
1 & \ldots &i-1 & i+k-1 & i+k-2 & \ldots & i & i+k & \ldots n
\end{array}\right )
\] 
Note that $w_0^{(k,i)}\in \mathfrak{S}_n$ is equivalent to $w_0$ in $\mathfrak{S}_k$, where $k\leq n$. We are allowed to say these are equivalent in $\mathfrak{S}_n$ because $\mathfrak{S}_k$ is a subgroup of $\mathfrak{S}_n$ for all $2\leq k \leq n$. We also note that $s_i$ is the transposition with the smallest index in the reduced decomposition for  $w_0^{(k,i)}$. This tells us that the descent set is $\{ i,i+1\ldots ,i+k-2\}$, which we sometimes refer to using the interval notation $[i,i+k-2]$.
\end{definition}

For example, $s_1s_2s_1\in \mathfrak{S}_6$ and $s_4s_5s_4\in \mathfrak{S}_6$ are both equivalent to $w_0\in \mathfrak{S}_3$, but we would write them as 
\[
w_0^{(3,1)}  = [321456],
 \mbox{ and } w_0^{(3,4)}  = [123654] .
\]

Much of our work relies on $\sigma$ and $\sigma s_i$ for $i\in \Des(\sigma)$. To conclude this section, we discuss when one of these permutations is covered by, or equivalent to, $w_0^{(k,i)}$.

\begin{proposition}\label{w0andchildren}
Let $\sigma\in \mathfrak{S}_n$ be such that $\sigma$ is not equivalent to a $w_0^{(k,i)}$, for $k>2$. Further suppose that it is not covered by a permutation equivalent to $w_0^{(k,i)}$ for any $i$. The permutations covered by $\sigma$ are $\{\sigma s_j\mid j\in \Des(\sigma) \}$.  
There exists $k,i$ such that $\sigma s_j$ is equivalent to $w_0^{(k,i)}$ for at most one $j$.
\end{proposition}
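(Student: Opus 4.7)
The first statement, that the permutations covered by $\sigma$ in the right weak order are exactly $\{\sigma s_j \mid j \in \Des(\sigma)\}$, is the standard description of the cover relation recalled in Section~\ref{sec:background}, so no additional work is required.

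For the second statement I plan to argue by contradiction. Suppose there are two distinct indices $j_1, j_2 \in \Des(\sigma)$ such that $\sigma s_{j_1}$ is equivalent to some $w_0^{(k_1, i_1)}$ and $\sigma s_{j_2}$ is equivalent to some $w_0^{(k_2, i_2)}$. Since $\ell(\sigma s_{j_1}) = \ell(\sigma s_{j_2}) = \ell(\sigma) - 1$ and $\ell(w_0^{(k, i)}) = \binom{k}{2}$, we get $\binom{k_1}{2} = \binom{k_2}{2}$, hence $k_1 = k_2 =: k$. Choosing the representatives of the two equivalence classes appropriately, I may assume $\sigma s_{j_1} = w_0^{(k, i_1)}$ and $\sigma s_{j_2} = w_0^{(k, i_2)}$ with $i_1 < i_2$.

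The crucial step is a support calculation. The support of $w_0^{(k, i)}$ is the block of consecutive generators $\{s_i, s_{i+1}, \ldots, s_{i+k-2}\}$, and since each product $w_0^{(k, i_\alpha)} s_{j_\alpha}$ is a reduced expression for $\sigma$ (because $j_\alpha \in \Asc(w_0^{(k, i_\alpha)})$, so $j_\alpha \notin [i_\alpha, i_\alpha + k - 2]$), the support of $\sigma$ is the $k$-element set
\[
\{s_{i_1}, \ldots, s_{i_1 + k - 2}\} \cup \{s_{j_1}\} = \{s_{i_2}, \ldots, s_{i_2 + k - 2}\} \cup \{s_{j_2}\}.
\]
A short combinatorial check shows that for $k \geq 3$, a $k$-element set of simple transpositions admits two different decompositions of the form (consecutive block of length $k - 1$) $\cup$ (singleton) only when the set itself is a consecutive block of length $k$; this forces $i_2 = i_1 + 1$, $j_1 = i_1 + k - 1$, and $j_2 = i_1$.

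In this sole surviving configuration I would write out the two expressions $w_0^{(k, i_1)} s_{i_1 + k - 1}$ and $w_0^{(k, i_1 + 1)} s_{i_1}$ in one-line notation and observe directly that they disagree at position $i_1$: the first gives value $i_1 + k - 1$ (the top of the reversed block of $w_0^{(k, i_1)}$), while the second gives value $i_1 + k$ (pulled in by the swap at position $i_1$ from $w_0^{(k, i_1 + 1)}$). This contradicts both products equalling $\sigma$ and completes the argument. The main obstacle is the support case analysis; once we are reduced to the consecutive-block configuration, the one-line mismatch is immediate. The hypotheses on $\sigma$ serve to rule out the $k = 2$ degenerate situation (e.g.\ $\sigma = s_is_j$ with $|i - j| > 1$ really does have two children in the $w_0^{(2, \cdot)}$ class) and the boundary cases where $\sigma$ might itself be a longest element of a parabolic subgroup or be covered by one.
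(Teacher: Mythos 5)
Your proof is correct and follows essentially the same route as the paper's: both arguments use the common length to force $k_1=k_2$, then pin down the unique possible configuration ($i_2=i_1+1$, $j_1=i_1+k-1$, $j_2=i_1$) --- the paper via the constraint that each $\Des(\sigma s_{j_\alpha})$ is a consecutive interval, you via the support of $\sigma$ admitting two block-plus-singleton decompositions, which is equivalent bookkeeping since the support and descent set of $w_0^{(k,i)}$ are the same interval --- and both finish with the identical contradiction that the two resulting one-line notations disagree. No gaps; your closing remark that the hypotheses exist to exclude the genuinely degenerate $k=2$ case matches the paper's own treatment.
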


\begin{proof}
This comes from the descent sets for each $\sigma s_j$, and the lattice structure of $W(\mathfrak{S}_n)$. We consider $\sigma s_x$ and $\sigma s_y$ for $x,y\in \Des(\sigma)$. Let $\ell(\sigma )=l$. 

Suppose that $\sigma s_x = w_0^{(k_x,i)}$ and $\sigma s_y = w_0^{(k_y,j)}$. Since these permutations each have the same length, $l-1$, $k_x=k_y$ while the minimum index $i$ and $j$ could be distinct.  

Consider $\sigma s_x$ first. We can draw part of the weak order lattice containing $\sigma$ and $\sigma s_x$ in order to get an idea of what the descent sets must look like.

\begin{figure}[H]
\begin{center}
\begin{tikzpicture}[node distance=2cm]
\node(sigma)	 				{$\sigma$};
\node(sx)       [below left of=sigma] {$\sigma s_x = w_0^{(k,i)}$};
\node(sy)      [below right of=sigma]  {$\sigma s_y = w_0^{(k,j)}$};
\node(sik)      [below left of=sx]       {$w_0^{(k,i)} s_{i+k-2}$};
\node(dots1)   [left of=sik] {$\ldots$};
\node(si)      [left of=dots1]       {$w_0^{(k,i)} s_i$};

\draw(sigma)       -- (sx);
\draw(sigma)       -- (sy);
\draw(sx)       -- (si);
\draw(sx)       -- (sik);

\end{tikzpicture}
\vspace{0.1in}
\caption{Proposition \ref{w0andchildren} subposet}
\label{fig:Weakorder1}
\end{center}
\end{figure}

We write $\Des(\sigma s_x) = \{ i, i+1, \ldots , i+k-2\}$. Let $A\subset [n]$ such that $x\in A$, $A\cap [i,i+k-2] =\emptyset$ and $\{i-1,i+k-1\}\subset A$. We note that $x$ could be $i-1$ or $i+k-1$, and we could end up with $\Des(\sigma) = [i,i+k-3]\cup \{x\}$, or $\Des(\sigma) = [i+1, i+k-2] \cup \{x\}$. However, in both of these cases, $y\in \Des(\sigma)$ would be such that $\Des(\sigma s_y) \neq \Des(w_0^{(k,i)})$ for any $i,k$. 

Since $\Des(\sigma s_y)$ only contains consecutive elements, this tells us two things: (i) $x$ must be either $i-1$ or $i+k-1$, and (ii) $\Des(\sigma) = \Des(\sigma s_x) \cup \{x\} $.

Similar reasoning tells us that in order for $\Des(\sigma s_x)$ to only contain consecutive elements then $y$ must either equal $x$, or one of the following: if $x=i+k-1$, then $y=i$ and if $x=i-1$, then $y=i+k-2$. Without loss of generality, consider $\Des(\sigma ) = \{x=i-1, i, i+1, \ldots, i+(k-2)=y\}$. 

Then since we claim both $\sigma s_x$ and $\sigma s_y$ are equivalent to $w_0^{(k,j)}$ for some $j$, we have that $\sigma s_x = w_0^{(k,i)}$ and $\sigma s_y = w_0^{(k,i-1)}$. This also means we know two different reduced decompositions for $\sigma$ using decompositions for the $w_0^{(k,j)}$'s that we discussed in Chapter 1: $(s_xs_i\ldots s_{i+k-3}\ldots s_xs_is_x)s_y$ and $(s_is_{i+1}\ldots s_y\ldots s_is_{i+1} s_i)s_x$.

But this means
\[ 
\sigma = \left ( \begin{array}{cccccccccc} 1 & \ldots &i-1 & i & i+1 & \ldots & i+k-2 & i+k -1& \ldots n \\
1 & \ldots &  i+k-1 & i+k-3 & i+k-4 & \ldots &  i-1& i+k-2 & \ldots n
\end{array}\right ),
\] 
and
\[ 
\sigma =\left ( \begin{array}{cccccccccc} 1 & \ldots &i-1 & i & i+1 & \ldots & i+k-2 & i+k -1 & \ldots n \\
1 & \ldots & i&   i+k-1 & i+k-2 & \ldots & i+1 &  i-1 & \ldots n
\end{array}\right ),
\] 
which are not equivalent permutations. In fact, the only way we could have both reduced decompositions for $\sigma$ is if $k=2$, and $\sigma = s_x s_y$. Since we assume $k>2$, $\sigma$ can only cover at most one permutation equivalent to $w_0^{(k,i)}$ for some $i$.
\end{proof}


\section{Subgraphs using the Descent set of a permutation}\label{sec:subgraphs_des}
The main motivation for this section is the graph seen in Figure \ref{fig:w0_nocolor}.

\begin{figure}[H]
\begin{center}

\begin{tikzpicture}[node distance=2cm]
\node(v1)	 				{$(12132)1$};
\node(v2)       [right of=v1] 		{$(12312)1$};
\node(v3)      [below right of=v2]  	{$(12321)2$};
\node(v4)      [below right of=v3]  	{$(13231)2$};
\node(v5)      [below right of=v4]  	{$(13213)2$};
\node(v6)      [below left of=v4]  	{$(31231)2$};
\node(v7)      [below right of=v6]  	{$(31213)2$};
\node(v8)      [below left of=v7]  	{$(32123)2$};
\node(v9)      [below left of=v8]  	{$(32132)3$};
\node(v10)      [ left of=v9]  		{$(32312)3$};
\node(v11)      [below left of=v1]  	{$(21232)1$};
\node(v12)      [below left of=v11]  	{$(21323)1$};
\node(v13)      [below left of=v12]  	{$(23123)1$};
\node(v14)      [below right of=v12]  	{$(21321)3$};
\node(v15)      [below right of=v13]  	{$(23121)3$};
\node(v16)      [below right of=v15]  	{$(23212)3$};

\draw (v1) --(v2);
\draw[dashed] (v2) --(v3);
\draw[dashed] (v3) --(v4);
\draw (v4) --(v5);
\draw (v4) --(v6);
\draw (v5) --(v7);
\draw (v6) --(v7);
\draw[dashed] (v7) --(v8);
\draw[dashed] (v8) --(v9);
\draw (v9) --(v10);
\draw[dashed] (v16) --(v15);
\draw[dashed] (v10) --(v16);
\draw (v12) --(v13);
\draw (v12) --(v14);
\draw (v13) --(v15);
\draw (v14) --(v15);
\draw[dashed] (v1) --(v11);
\draw[dashed] (v11) --(v12);
\end{tikzpicture}
\vspace{0.1in}
\caption{The graph $G(w_0)$ for $w_0\in \mathfrak{S}_4$.}
\label{fig:w0_nocolor}
\end{center}
\end{figure}

This graph is large, and this is only in $\Sym_4$. In this section, we produce a family of induced subgraphs that allows us to partition $G(w_0)$ into smaller parts, which is useful when $n>4$.

\begin{definition}
An \JEem{induced subgraph} $H$ of $G$ is such that $V(H)\subset V(G)$ and $E(H)$ is defined as the subset of $E(G)$ that have both end points in $V(H)$. We denote this subgraph relation in the standard way: $H\leq G$. 
\end{definition}

\begin{proposition}\label{graph1}
Let $\sigma, \tau\in \mathfrak{S}_n$, $n\geq 2$ such that $\tau \lessdot \sigma$ in the weak order lattice. Then $G(\tau)$ is an induced subgraph of $G(\sigma)$. 
\end{proposition}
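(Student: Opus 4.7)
The plan is to exhibit an explicit injection $\phi\colon \calR(\tau)\to \calR(\sigma)$ whose image induces the required subgraph. Because $\tau \lessdot \sigma$ in the weak order, there is some $i\in \Des(\sigma)$ with $\sigma = \tau s_i$ and $\ell(\sigma)=\ell(\tau)+1$. First I would define $\phi$ by appending the letter $i$: for $r=r_1 r_2 \cdots r_{\ell(\tau)}\in\calR(\tau)$, set $\phi(r) = r_1 r_2\cdots r_{\ell(\tau)}\, i$. The resulting word multiplies to $\tau s_i = \sigma$ and has length exactly $\ell(\sigma)$, so it lies in $\calR(\sigma)$, and $\phi$ is clearly injective. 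Writing $V' = \phi(\calR(\tau)) \subseteq V(G(\sigma))$, it then suffices to prove that $\phi$ is a color-preserving graph isomorphism from $G(\tau)$ onto the induced subgraph $G(\sigma)[V']$.

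The forward direction is routine: any commutation or braid move witnessing an edge $r\sim r'$ in $G(\tau)$ acts on positions contained in $\{1,\ldots,\ell(\tau)\}$, and carrying out the same move at the same positions on $\phi(r)=r\,i$ produces $r'\,i=\phi(r')$ with the edge type unchanged. Thus every edge of $G(\tau)$ is exhibited as an edge of $G(\sigma)[V']$ under $\phi$, and the map is color-preserving.

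The substance of the argument is the converse: I need to rule out any edge of $G(\sigma)[V']$ whose associated move touches position $\ell(\sigma)$, for otherwise new edges could appear in the induced subgraph that do not come from $G(\tau)$. Here I would do a short case analysis on the two edge types. If a commutation swaps positions $\ell(\tau)$ and $\ell(\sigma)$ of $\phi(r)$, then the resulting word ends in $r_{\ell(\tau)}$; membership in $V'$ forces $r_{\ell(\tau)}=i$, but then the letters at those positions are equal and no commutation is legal. If a braid move uses the last three positions, then by the pattern $aba\mapsto bab$ the final letter changes from $i$ to $i\pm 1$, so the result leaves $V'$. Hence every edge in $G(\sigma)[V']$ is supported on the first $\ell(\tau)$ letters of its endpoints and descends via $\phi^{-1}$ to a commutation or braid edge of $G(\tau)$. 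This last case analysis is the main obstacle of the proof, but it is short once one observes that any move involving the appended letter $i$ alters the final letter of the word and therefore escapes $V'$.
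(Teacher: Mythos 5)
Your proof is correct and follows essentially the same route as the paper: append the letter $i$ to each word of $\calR(\tau)$, transport commutation and braid edges forward, and verify that no additional edges arise among the image vertices. Your explicit case analysis ruling out moves that touch the final position (a commutation would force two equal adjacent letters, a braid would change the last letter to $i\pm 1$) is in fact more careful than the paper's corresponding step, which simply asserts that any edge between two vertices ending in $i$ must come from $G(\tau)$.
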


\begin{proof}
If $\tau\lessdot \sigma$ in the weak order lattice, then there is some $i\in \Des(\sigma)$ such that $\tau s_i = \sigma$. Let $t \in \calR(\tau)$. Then $ti\in \calR(\sigma)$. This is how we consider $\calR(\tau)$ as a subset of $\calR(\sigma)$. 

In fact, this is also how we can consider $G(\tau)$ as a subgraph of $G(\sigma)$. If there is a commutation edge from $t_1$ to $t_2$ in $G(\tau)$, then there is a commutation edge from $t_1i$ to $t_2i$ in $G(\sigma)$. The same applies for braid edges in $G(\tau)$. Thus we can map edges from $G(\tau)$ into $G(\sigma)$. 

Additionally, we can look at $V(G(\tau))$ as a subset of $V(G(\sigma))$, and consider an edge in $G(\sigma)$ incident to two vertices contained in $V(G(\tau))$. This edge must already be contained in $G(\tau)$, because any commutation or braid move is happening between words in $\calR(\tau)$, since these vertices are of the form $\tau_1s_i$ and $\tau_2 s_i$.

Therefore, $G(\tau)$ is an induced subgraph of $G(\sigma)$.
\end{proof}

\begin{corollary}
Let $\sigma, \tau\in \mathfrak{S}_n$, $n\geq 2$ such that $\tau \leq \sigma$ in the weak order lattice. Then,
\begin{enumerate}
    \item $G(\tau)$ is an induced subgraph of $G(\sigma)$,
    \item $G(\tau)$ is an induced subgraph of $G(w_0)$.
\end{enumerate}
\end{corollary}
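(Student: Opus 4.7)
The plan is to derive both statements as direct consequences of Proposition \ref{graph1} combined with the transitivity of the induced-subgraph relation. Since $\tau \leq \sigma$ in the weak order lattice, there exists a saturated chain
\[
\tau = \sigma_0 \lessdot \sigma_1 \lessdot \sigma_2 \lessdot \cdots \lessdot \sigma_k = \sigma,
\]
where $k = \ell(\sigma) - \ell(\tau)$. This chain exists because the weak order is a graded poset with rank function $\ell$.

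For part (1), I would apply Proposition \ref{graph1} to each cover relation $\sigma_j \lessdot \sigma_{j+1}$, which yields $G(\sigma_j) \leq G(\sigma_{j+1})$ for each $0 \leq j \leq k-1$. Then I would invoke transitivity of the induced-subgraph relation: if $H_1 \leq H_2$ and $H_2 \leq H_3$, then $H_1 \leq H_3$, since $V(H_1) \subseteq V(H_2) \subseteq V(H_3)$ and the edge sets of $H_1, H_2$ are each induced by the appropriate vertex subsets. Chaining these relations along the saturated chain gives $G(\tau) \leq G(\sigma)$.

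For part (2), I would use the fact that $w_0$ is the maximal element of $W(\mathfrak{S}_n)$, so in particular $\sigma \leq w_0$. By part (1) applied to the pair $\sigma \leq w_0$, we have $G(\sigma) \leq G(w_0)$, and by part (1) applied to $\tau \leq \sigma$, we have $G(\tau) \leq G(\sigma)$. Transitivity of the induced-subgraph relation once more yields $G(\tau) \leq G(w_0)$.

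There is no genuine obstacle here beyond carefully invoking transitivity; the substantive content is all contained in Proposition \ref{graph1}. The one subtlety worth noting is that the identification of $G(\sigma_j)$ as a subgraph of $G(\sigma_{j+1})$ depends on a choice of $i_j \in \Des(\sigma_{j+1})$ such that $\sigma_j s_{i_j} = \sigma_{j+1}$, which then determines the embedding $t \mapsto t\, i_j$ on reduced words. Composing these embeddings along the chain gives the explicit embedding of $\calR(\tau)$ into $\calR(\sigma)$ (and ultimately into $\calR(w_0)$), but since the statement only asserts the existence of the induced-subgraph relation, this composition does not need to be analyzed further.
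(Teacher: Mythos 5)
Your proof is correct and follows essentially the same route as the paper: take a saturated chain from $\tau$ to $\sigma$, apply Proposition \ref{graph1} to each cover relation, and chain the resulting induced-subgraph relations by transitivity, with part (2) following from $\sigma \leq w_0$. Your added remark on the composed embeddings $t \mapsto t\,i_j$ is a nice explicit touch but not needed beyond what the paper does.
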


\begin{proof}
Consider the chain in the weak order lattice:
\[
\tau \lessdot \tau_1 \lessdot \tau_2 \lessdot \ldots \lessdot \tau_k \lessdot \sigma,
\]
where each element in the chain covers the previous element. We know such a chain exists because $\tau \leq \sigma$. Note that every element in $W(\Sym_n)$ is comparable to $w_0$, so for any $\tau$, we could set $\sigma = w_0$.

From the previous proposition, we can construct the following chain of induced subgraphs:
\[
G(\tau) \leq G(\tau_1) \leq G(\tau_2) \leq \ldots \leq G(\tau_k) \leq G(\sigma).
\]
So we have that $G(\tau) \leq G(\sigma)$, as desired.
\end{proof}

We wish to combine our previous results into a method of partitioning the vertex set of $G(\sigma)$.

\begin{theorem}\label{graph_partition}
     Let $\sigma \in \Sym_n$. Then the set $V(G(\sigma))$ may be partitioned into disjoint subsets of the form $V(G(\sigma s_i))$ for $i\in \Des(\sigma s_i)$.
\end{theorem}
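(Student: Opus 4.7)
The plan is to realize the partition by classifying each reduced word according to its final letter. For each $i \in \Des(\sigma)$, I would define
\[
V_i := \{\, r = r_1 r_2 \cdots r_{\ell(\sigma)} \in \calR(\sigma) \;:\; r_{\ell(\sigma)} = i \,\}.
\]
The claim will be that $\{V_i : i \in \Des(\sigma)\}$ is the desired partition, where each $V_i$ is identified with $V(G(\sigma s_i))$ via the bijection $r'i \leftrightarrow r'$ already used in Proposition \ref{graph1}.

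The key preliminary step is to verify that every reduced word for $\sigma$ ends in a letter from $\Des(\sigma)$. Given $r = r_1 \cdots r_{\ell(\sigma)} \in \calR(\sigma)$, the prefix $r_1 \cdots r_{\ell(\sigma)-1}$ is a reduced word for the permutation $\sigma s_{r_{\ell(\sigma)}}$, so $\ell(\sigma s_{r_{\ell(\sigma)}}) = \ell(\sigma) - 1$, which forces $r_{\ell(\sigma)} \in \Des(\sigma)$ by definition of the descent set in terms of cover relations in $W(\Sym_n)$. Hence $V(G(\sigma)) = \bigcup_{i \in \Des(\sigma)} V_i$.

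Disjointness is immediate: a word has a unique last letter, so $V_i \cap V_j = \emptyset$ whenever $i \neq j$. For the identification $V_i \cong V(G(\sigma s_i))$, I would use the map $\phi_i : \calR(\sigma s_i) \to V_i$ defined by $\phi_i(r') = r'i$. This map is well-defined because $i \in \Des(\sigma)$ means $(\sigma s_i)s_i = \sigma$, so appending $i$ to any reduced word for $\sigma s_i$ produces a reduced word for $\sigma$. It is manifestly injective and, by the preceding paragraph, surjective onto $V_i$. This is exactly the vertex-level embedding from Proposition \ref{graph1}.

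The main potential obstacle is purely bookkeeping: since the theorem asserts a partition of \emph{vertex sets} (not of graphs), I must resist the temptation to also split the edges, because an edge of $G(\sigma)$ between two words ending in different letters may cross two parts $V_i, V_j$. So the subtle point is that the statement only concerns the vertex set, and the argument avoids any claim about edges between the parts. Once this is made clear, the result follows directly from the bijection above and is in fact a structural refinement of the recursion in Theorem \ref{redrec}, since taking cardinalities recovers $|\calR(\sigma)| = \sum_{i \in \Des(\sigma)} |\calR(\sigma s_i)|$.
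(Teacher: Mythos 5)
Your proposal is correct and follows essentially the same route as the paper: the paper's proof simply cites the partition of $\calR(\sigma)$ by last letter underlying Theorem \ref{redrec} and the vertex identification $r' \mapsto r'i$ from Proposition \ref{graph1}, which are exactly the two ingredients you spell out in detail. Your explicit verification that every reduced word ends in a descent, and your remark that only the vertex set (not the edge set) is being partitioned, are worthwhile elaborations but not a different argument.
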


\begin{proof}
    As noted in Theorem \ref{redrec}, the set $\calR(\sigma)$ can be partitioned into $\calR(\sigma s_i)$ for $i\in \Des(\sigma)$. We simply apply the labeling of the subgraphs $G(\sigma s_i)$ to the vertex set of $G(\sigma)$ as in Proposition \ref{graph1}, and we have our partition of the vertex set.
\end{proof}

Using this partition of subgraphs, we wish to study the degree of vertices in $G(\sigma)$. 

\begin{definition}
Let $\sigma \in \mathfrak{S}_n$. For $G(\sigma)$, we define $d_\calB(v)$ to be the braid edge degree of a vertex $v$. That is, $d_\calB(v)$ counts the number of braid edges incident to $v$ in the graph $G(\sigma)$.

Similarly, we define $d_\calC(v)$ to be the commutation edge degree of a vertex $v$. That is, $d_\calC(v)$ counts the number of commutation edges incident to $v$ in the graph $G(\sigma)$.
\end{definition}

\begin{example} 
Consider again $w_0\in \mathfrak{S}_4$. The graph $G(w_0)$ has been drawn in Figure \ref{fig:w0}, and the vertices of $G(w_0)$ have been partitioned.

\begin{figure}[H]
\begin{center}

\begin{tikzpicture}[node distance=2cm]
\node(v1)	 				{$(12132)1$};
\node(v2)       [right of=v1] 		{$(12312)1$};
\node(v3)      [below right of=v2]  	{\color{blue}{$(12321)2$}};
\node(v4)      [below right of=v3]  	{\color{blue}{$(13231)2$}};
\node(v5)      [below right of=v4]  	{\color{blue}{$(13213)2$}};
\node(v6)      [below left of=v4]  	{\color{blue}{$(31231)2$}};
\node(v7)      [below right of=v6]  	{\color{blue}{$(31213)2$}};
\node(v8)      [below left of=v7]  	{\color{blue}{$(32123)2$}};
\node(v9)      [below left of=v8]  	{\color{red}{$(32132)3$}};
\node(v10)      [ left of=v9]  		{\color{red}{$(32312)3$}};
\node(v11)      [below left of=v1]  	{$(21232)1$};
\node(v12)      [below left of=v11]  	{$(21323)1$};
\node(v13)      [below left of=v12]  	{$(23123)1$};
\node(v14)      [below right of=v12]  	{\color{red}{$(21321)3$}};
\node(v15)      [below right of=v13]  	{\color{red}{$(23121)3$}};
\node(v16)      [below right of=v15]  	{\color{red}{$(23212)3$}};

\draw (v1) --(v2);
\draw[dashed,green] (v2) --(v3);
\draw[dashed,blue] (v3) --(v4);
\draw[blue](v4) --(v5);
\draw[blue] (v4) --(v6);
\draw[blue] (v5) --(v7);
\draw[blue] (v6) --(v7);
\draw[dashed,blue] (v7) --(v8);
\draw[dashed,green] (v8) --(v9);
\draw[red] (v9) --(v10);
\draw[dashed,red] (v16) --(v15);
\draw[dashed,red] (v10) --(v16);
\draw (v12) --(v13);
\draw[green] (v12) --(v14);
\draw[green] (v13) --(v15);
\draw[red] (v14) --(v15);
\draw[dashed] (v1) --(v11);
\draw[dashed] (v11) --(v12);
\end{tikzpicture}
\vspace{0.1in}
\caption{The graph $G(w_0)$ for $w_0\in \mathfrak{S}_4$, with induced subgraphs highlighted.}
\label{fig:w0}
\end{center}
\end{figure}

The induced subgraphs highlighted in different colors: $G(w_0s_1)$ is drawn black with the 1 in the right most position in the reduced words, $G(w_0s_2)$ in blue with a 2 in the right most position, and $G(w_0s_3)$ in red with a three in the right most position. There are also four edges in green that only appear in $G(w_0)$ connecting those subgraphs.
\end{example}

The edges between subgraphs are what we are particularly interested in. Note that the braid edge degree of the vertex $\overline{u} = (32132)3$ is different depending on whether we view it as $u\in V(G([4312]))$ or $\overline{u} = u3\in V(G(w_0))$. From this observation, we can now produce results for the braid edge degree of a vertex in $G(\sigma)$.

\begin{proposition}\label{newEDGE}
Let $\sigma \in \mathfrak{S}_n$, and $i,j\in \Des(\sigma)$. Suppose that $ws_i, us_j \in \calR(\sigma)$, where $w\in \calR(\alpha)$, $u\in \calR(\beta)$ and $\alpha, \beta \lessdot \sigma$. Suppose that $(ws_i)- (us_j)$ is an edge in the graph $G(\sigma)$. 
\begin{enumerate}
\item If $u\neq w$, but $i=j$, then $\alpha = \beta$ and the edge appears in the induced subgraph $G(\alpha)$. 
\item If $i\neq j$, then the vertices appear in two disjoint subgraphs of $G(\sigma)$, and so this edge is not properly contained in any induced subgraph.
\end{enumerate}
\end{proposition}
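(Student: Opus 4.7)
The plan is to split the analysis into the two cases and exploit the partition of $V(G(\sigma))$ from Theorem \ref{graph_partition}.

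For part (1), assume $i = j$ with $u \neq w$, so that $ws_i$ and $us_i$ are distinct vertices that share the final letter $i$. I would argue that the commutation or braid move realizing the edge cannot touch the final position. A commutation swap of positions $\ell - 1$ and $\ell$ of $ws_i$ would exchange the terminal $i$ with the last letter $a$ of $w$; since commutation requires $|a - i| > 1$, the resulting word ends in $a \neq i$, contradicting the form $us_i$. A braid move occupying positions $\ell - 2, \ell - 1, \ell$ converts the pattern $k, k+1, k$ to $k+1, k, k+1$ (or vice versa), so the third entry switches between $k$ and $k+1$; again the final letter changes. Hence the move is confined to positions $1, \ldots, \ell - 1$, which means $w$ and $u$ themselves are related by a single commutation or braid move. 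This forces $w, u \in \calR(\alpha)$ for the same $\alpha$, and by Proposition \ref{graph1} the edge lives in the induced subgraph $G(\alpha) \leq G(\sigma)$.

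Part (2) follows directly from Theorem \ref{graph_partition}: when $i \neq j$, the blocks $V(G(\sigma s_i))$ and $V(G(\sigma s_j))$ of the partition of $V(G(\sigma))$ are disjoint, and $ws_i$, $us_j$ lie in these two distinct blocks respectively. The edge therefore spans two subgraphs of the partition and is not properly contained in any single $G(\sigma s_k)$.

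The only step of real substance is the case analysis at the end of the reduced word in part (1); it is brief but must be carried out in full so that neither a commutation (ruled out by distance $>1$) nor a braid move (ruled out because $k,k+1,k$ and $k+1,k,k+1$ differ in the final entry) can act on the last position while fixing that letter. Everything else is an immediate application of either the partition from Theorem \ref{graph_partition} or the subgraph inclusion from Proposition \ref{graph1}.
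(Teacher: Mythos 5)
Your proposal is correct and follows essentially the same route as the paper: part (2) is the same disjointness argument via the partition of $\calR(\sigma)$ into the sets $\calR(\sigma s_i)$, and part (1) is the induced-subgraph property of Proposition \ref{graph1}. Your explicit check that neither a commutation nor a braid move can involve the final position while preserving the terminal letter $i$ is a welcome filling-in of the step that the paper delegates to (and states only briefly within) Proposition \ref{graph1}.
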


\begin{proof}
Case 1 was already proven in Proposition \ref{graph1}. 
For Case 2, if $i\neq j$, then $ws_i$ appears in $\calR(\sigma s_i)$, which is disjoint from $\calR(\sigma s_j)$ which contains $us_j$. This means that the two vertices appear in disjoint induced subgraphs. Thus the edge $(ws_i)- (us_j)$ connects the two subgraphs in $G(\sigma)$, but is not properly contained in either one. 
\end{proof}

Now that we have a better idea of what edges appear only in $G(\sigma)$ and not in any of the induced subgraphs, we have the following proposition:

\begin{proposition}\label{braidrec11}
Let $\sigma, \tau \in \mathfrak{S}_n$, $n\geq 2$ such that $\tau \lessdot \sigma $ in the weak order lattice. Suppose that $\tau s_i = \sigma$, where $i\in \Des(\sigma)$.
\begin{enumerate}
\item If $i-1,~i,~i+1\in \Des(\sigma)$, then there are $|\calR(\sigma s_is_{i+1}s_i)|+|\calR(\sigma s_is_{i-1}s_i)|$ braid edges incident to the vertices of $G(\tau)$ that are not contained in the induced subgraph $G(\tau)$. 
\item If only $i,~i+1\in \Des(\sigma)$, (or $i-1,i\in \Des(\sigma)$), then there are $|\calR(\sigma s_is_{i+1}s_i)|$ braid edges incident to the vertices of $G(\tau)$ that are not contained in the induced subgraph $G(\tau)$. 
\item If neither of these cases is true, then the only braid edges incident to the vertices of $G(\tau)$ are the braid edges that are incident to two vertices from $G(\tau)$.  
\end{enumerate}
\end{proposition}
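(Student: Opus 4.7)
The approach is to classify braid edges incident to vertices of the induced subgraph $G(\tau)$ according to whether they remain inside $G(\tau)$ or leave it, and to count the latter directly. A vertex of $G(\tau)$ is a reduced word of $\sigma$ whose last letter is $i$, and by Proposition \ref{newEDGE} such a vertex is connected by a braid edge to a vertex outside $G(\tau)$ precisely when the braid move changes the final letter; that is, when it acts on the last three positions of the word. All other braid moves leave the final letter $i$ intact, hence stay within $V(G(\tau))$, and contribute only to edges already tallied by the induced subgraph.

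For a braid move to act on the last three letters, those letters must form a pattern $a\,b\,a$ with $|a-b|=1$, and since $a=i$ the only options are $b=i+1$ or $b=i-1$. After the move, the last three letters become $(i\pm 1)\,i\,(i\pm 1)$, so the other endpoint of the edge lies in $V(G(\sigma s_{i\pm 1}))$, which is a legitimate piece of $G(\sigma)$ only if $i\pm 1\in\Des(\sigma)$. Reduced words of $\sigma$ whose last three letters spell $i(i+1)i$ then biject with $\calR(\sigma s_i s_{i+1} s_i)$ by truncation of the last three letters; the requisite length identity $\ell(\sigma s_i s_{i+1} s_i)=\ell(\sigma)-3$ under $i,i+1\in\Des(\sigma)$ follows from a direct check in one-line notation, using the chain $\sigma_i>\sigma_{i+1}>\sigma_{i+2}$ to verify that each of the three successive right multiplications by $s_i$, $s_{i+1}$, $s_i$ drops the length by one. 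A symmetric argument produces the bijection between words ending in $i(i-1)i$ and $\calR(\sigma s_i s_{i-1} s_i)$ under the hypothesis $i-1,i\in\Des(\sigma)$.

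Assembling these counts yields the three cases of the proposition. When both $i-1$ and $i+1$ lie in $\Des(\sigma)$, both tail patterns are admissible and their contributions add to $|\calR(\sigma s_i s_{i+1} s_i)|+|\calR(\sigma s_i s_{i-1} s_i)|$; when only one of $i\pm 1$ lies in $\Des(\sigma)$, only the corresponding term survives; and when neither does, no reduced word of $\sigma$ can end in either braid pattern, so every braid edge at a vertex of $G(\tau)$ is internal, giving Case~3. The main subtlety is avoiding double counting: each external edge has a unique endpoint in $G(\tau)$ (its word ends in $i$ and the other endpoint's does not), and the tail patterns $i(i+1)i$ and $i(i-1)i$ are mutually exclusive since they prescribe incompatible values at position $\ell(\sigma)-1$, so each edge is counted exactly once.
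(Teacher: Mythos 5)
Your proof is correct and follows essentially the same route as the paper: both identify the braid edges leaving $G(\tau)=G(\sigma s_i)$ with reduced words of $\sigma$ ending in the pattern $i\,(i\pm 1)\,i$, and count these by truncation as $|\calR(\sigma s_i s_{i\pm 1} s_i)|$. Your treatment is if anything slightly more careful than the paper's, since you explicitly verify that moves not touching the last letter stay inside the induced subgraph, that each external edge has a unique endpoint in $G(\tau)$, and that the two tail patterns are mutually exclusive.
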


\begin{proof}
Consider $\sigma, \tau \in \mathfrak{S}_n$ such that $\tau \lessdot \sigma $, and suppose that $ i, i+1\in \Des(\sigma)$ and $\sigma s_i=\tau $. 

Note that since $i,i+1\in \Des(\sigma)$, then $\sigma = [a_1\ldots a_n]$ is such that $a_i>a_{i+1}>a_{i+2}$. We can consider $\sigma s_i= [\ldots  a_{i+1}a_ia_{i+2}\ldots] $, and $\sigma s_{i+1} = [\ldots  a_ia_{i+2}a_{i+1}\ldots]$. We note that $i\in \Des(\sigma s_{i+1})$ and $i+1\in \Des(\sigma s_i)$. We can continue in this manner to get the portion of weak order lattice in Figure \ref{fig:i+1lattice}.

\begin{figure}[H]
\begin{center}
\begin{tikzpicture}[node distance=1.2cm]
\node(sigma)	 	{$\sigma$};
\node(sx)       [below left of=sigma] {$\sigma s_i$};
\node(sy)      [below right of=sigma]  {$ \sigma s_{i+1}$};
\node(sik)      [below of=sx]       {$\sigma s_is_{i+1} $};
\node(dots1)   [below right of =sik] {$\sigma s_is_{i+1}s_i $};
\node(si)      [below of=sy]       {$\sigma s_{i+1} s_i$};

\draw(sigma)       -- (sx);
\draw(sigma)       -- (sy);
\draw(sx)       -- (sik);
\draw(sy)       -- (si);
\draw(si) --(dots1);
\draw(sik) --(dots1);

\end{tikzpicture}
\vspace{0.1in}
\caption{When $i,i+1\in \Des(\sigma)$, we have this subposet in $W(\mathfrak{S}_n)$}
\label{fig:i+1lattice}
\end{center}
\end{figure}

Let $\alpha= \sigma s_is_{i+1}s_i$ so that $\sigma = \alpha s_is_{i+1}s_i$. Equivalently, we have that $\sigma=\alpha s_{i+1}s_is_{i+1}$. 

We want to be able to count the number of reduced words of $\sigma$ that have a braid move in those last three positions that use the letters $i$ and $i+1$. This means we really need to count the number of ways we can write $\alpha$. And in fact, there are $|\calR(\sigma s_is_{i+1}s_i)|$ ways to write $\alpha$.

We note that all vertices that end in the letters $i,~i+1,~i$ are contained in the subgraph $G(\sigma s_i)$, where $\tau = \sigma s_i$. All vertices that end in the letters $i+1,~i,~i+1$ are contained in the subgraph $G(\sigma s_{i+1})$. Then the number of braid edges between these subgraphs are equal to $|\calR(\sigma s_is_{i+1}s_i)|$.

The same reasoning allows us to conclude that if $i-1,~i,~i+1\in \Des(\sigma)$, there are $|\calR(\sigma s_is_{i+1}s_i)|+|\calR(\sigma s_is_{i-1}s_i)|$ ``new" braid edges incident to vertices in $G(\tau)$. 

Similarly, if we cannot write $\sigma =\alpha s_i s_{i\pm 1}s_i$ for this particular $i$, then the only possible braid edges that can be incident to the vertices of $G(\tau)$ must be fully contained in the induced subgraph.
\end{proof}

The following recursive formula is an immediate consequence of Proposition \ref{braidrec11}.

\begin{corollary}\label{recDEG}
Let $\sigma \in \mathfrak{S}_n$. Then
\[
\sum_{v\in G(\sigma)} d_\calB(v) =\left ( \sum_{i\in \Des(\sigma)}\sum_{u\in G(\sigma s_i)}d_\calB(u) \right )+ 2 \cdot \sum_{i,i+1\in \Des(\sigma)}|\calR(\sigma s_is_{i+1}s_i)|,
\]
where we consider $d_\calB(u)$ in $G(\sigma s_i)$ on the right hand side, and $\overline{u}=ui \in G(\sigma)$ on the left hand side.
\end{corollary}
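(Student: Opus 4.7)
The plan is to derive the recursive identity by combining the vertex partition of Theorem \ref{graph_partition} with the crossing-edge count of Proposition \ref{braidrec11} via a handshake-style double count. First I would partition $V(G(\sigma))$ into the disjoint blocks $V(G(\sigma s_i))$ for $i\in \Des(\sigma)$ as in Theorem \ref{graph_partition}. For each vertex $\overline{u}=ui$ lying in the block indexed by $i$, I would split the braid edges of $G(\sigma)$ incident to $\overline{u}$ into two types: \emph{internal} edges whose other endpoint also lies in the same block $V(G(\sigma s_i))$, and \emph{crossing} edges whose other endpoint lies in some other block $V(G(\sigma s_j))$ with $j\neq i$.

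By Proposition \ref{graph1}, the subgraph of $G(\sigma)$ induced on $V(G(\sigma s_i))$ is exactly $G(\sigma s_i)$. Consequently, the internal braid degree of $\overline{u}$ in $G(\sigma)$ equals $d_\calB(u)$ measured in $G(\sigma s_i)$. Summing over all $\overline{u}\in V(G(\sigma))$ and regrouping by the block containing $\overline{u}$ yields precisely the first term $\sum_{i\in \Des(\sigma)}\sum_{u\in G(\sigma s_i)} d_\calB(u)$ on the right-hand side.

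For the crossing edges, I would invoke Proposition \ref{braidrec11}. Any braid edge connecting two distinct blocks $V(G(\sigma s_i))$ and $V(G(\sigma s_j))$ must have $|i-j|=1$, because changing the final letter of a reduced word by a single braid move forces the move to involve that terminal letter, and the two participating generators in a braid move differ by $1$. Proposition \ref{braidrec11} tells us that when $i,i+1\in \Des(\sigma)$, the number of crossing braid edges between $V(G(\sigma s_i))$ and $V(G(\sigma s_{i+1}))$ is $|\calR(\sigma s_is_{i+1}s_i)|$; the braid relation gives $\sigma s_is_{i+1}s_i=\sigma s_{i+1}s_is_{i+1}$, so this count is symmetric in $i$ and $i+1$. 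Each such crossing edge contributes $2$ to $\sum_{v\in G(\sigma)} d_\calB(v)$, one at each endpoint, so summing over all consecutive descent pairs $i,i+1\in \Des(\sigma)$ produces the second term $2\cdot \sum_{i,i+1\in \Des(\sigma)}|\calR(\sigma s_is_{i+1}s_i)|$.

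The main obstacle is nothing beyond careful bookkeeping: one must verify that the outer index set $\{i:\, i,i+1\in\Des(\sigma)\}$ visits each consecutive descent pair exactly once, so that the factor of $2$ correctly accounts for the two endpoints of each crossing edge, and that the three cases of Proposition \ref{braidrec11} are organized so that every crossing braid edge incident to $V(G(\sigma s_i))$ is counted in precisely one pair $\{i-1,i\}$ or $\{i,i+1\}$. Once this is checked, adding the internal and crossing contributions gives the claimed identity.
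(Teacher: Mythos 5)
Your proposal is correct and matches the paper's route: the paper presents this corollary as an immediate consequence of Proposition \ref{braidrec11} together with the block partition of $V(G(\sigma))$ into the induced subgraphs $G(\sigma s_i)$, which is exactly the internal-versus-crossing handshake count you carry out. Your added observations --- that a braid edge changing the final letter forces $|i-j|=1$, and that $\sigma s_is_{i+1}s_i=\sigma s_{i+1}s_is_{i+1}$ makes the crossing count symmetric so the factor of $2$ is just the two endpoints of each crossing edge --- are precisely the bookkeeping the paper leaves implicit.
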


This is a brand new way to count the number of braid edges in a graph of a set of reduced words. Now we have methods of breaking $G(\sigma)$ into subgraphs, and a way to count the braid edges in $G(\sigma)$. Since we want to understand the congruence classes in $\calB(\sigma)$, this is a step closer to that goal.

Because the arguments are very similar, we also prove a result that allows us to count the number of commutation edges in $G(\sigma)$. 

\begin{proposition}\label{braidedge10}
Let $\sigma, \tau \in \mathfrak{S}_n$, $n\geq 2$ such that $\tau \lessdot \sigma $ in the weak order lattice. Suppose that $\tau s_i = \sigma$, where $i\in \Des(\sigma)$. Let $I_i = \{j\mid j\in \Des(\sigma) \mbox{ and } |j-i|>1 \} $.
\begin{enumerate}
\item If $I_i \neq \emptyset$,  then there are $\sum_{j\in I_i}|\calR(\sigma s_is_j)|$ commutation edges incident to the vertices of $G(\tau)$ that are not contained in the induced subgraph. 
\item If $I_i = \emptyset$, then the only commutation edges incident to the vertices of $G(\tau)$ are the commutation edges that are incident to two vertices from $G(\tau)$.  
\end{enumerate}
\end{proposition}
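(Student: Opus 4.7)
The plan is to mirror the strategy of Proposition \ref{braidrec11}, but with commutation relations playing the role that braid relations played there. The central observation is that by Theorem \ref{graph_partition}, the set $V(G(\sigma))$ partitions along the last letter of each reduced word: a vertex of $V(G(\sigma s_k))$ is exactly a reduced word of $\sigma$ ending in the letter $k$, for $k \in \Des(\sigma)$. A commutation edge of $G(\sigma)$ therefore either stays inside one block of this partition (when the swapped letters do not include the final one) or crosses between two blocks (when the swap involves the final letter). So I would begin by classifying commutation edges incident to $V(G(\tau)) = V(G(\sigma s_i))$ into "internal" and "cross-subgraph" edges, noting that internal edges are, by Proposition \ref{graph1}, already accounted for by $G(\tau)$ as an induced subgraph of $G(\sigma)$.

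Next I would characterize the cross-subgraph commutation edges. A vertex $v = w' \cdot j \cdot i \in V(G(\sigma s_i))$ admits a commutation of its last two letters precisely when $|i-j|>1$, and the resulting word $w' \cdot i \cdot j$ lies in $V(G(\sigma s_j))$, which forces $j \in \Des(\sigma)$. Hence the subgraphs $V(G(\sigma s_j))$ reachable from $V(G(\sigma s_i))$ via a single cross-subgraph commutation edge are indexed exactly by $j \in I_i$. For each such $j$, the map sending the edge $\{w' ji,\, w' ij\}$ to the prefix $w' \in \calR(\sigma s_i s_j)$ is a bijection; here I use that $s_i$ and $s_j$ commute, so $\sigma s_i s_j = \sigma s_j s_i$, and both orderings agree on the parent permutation. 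This yields $|\calR(\sigma s_i s_j)|$ cross-edges between $V(G(\sigma s_i))$ and $V(G(\sigma s_j))$.

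Summing over $j \in I_i$ produces the formula in part (1), and when $I_i = \emptyset$ no terminal-letter commutation is possible at any word ending in $i$, so every commutation edge incident to $V(G(\tau))$ stays in the induced subgraph, giving part (2). The main subtlety — the analogue of the subtlety in the braid case — is verifying that the counts for distinct $j \in I_i$ do not overlap: since each cross-edge has a unique "other endpoint" with a unique terminal letter, the blocks of the partition of $V(G(\sigma))$ are pairwise disjoint, and so the sum over $j \in I_i$ correctly enumerates each cross-edge incident to $V(G(\tau))$ exactly once. I expect this bookkeeping, together with the reduction of arbitrary cross-edges to terminal-letter commutations, to be the only nontrivial step; everything else is parallel to the braid argument.
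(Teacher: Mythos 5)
Your proposal is correct and follows essentially the same route as the paper's proof: classify commutation edges incident to $V(G(\tau))$ as internal or cross-subgraph, observe that a cross-edge must commute the terminal letter $i$ with some $j\in\Des(\sigma)$ satisfying $|i-j|>1$, and count such edges by the common prefix in $\calR(\sigma s_i s_j)$. Your explicit attention to the disjointness of the blocks and to reducing arbitrary cross-edges to terminal-letter swaps is slightly more careful bookkeeping than the paper records, but it is the same argument.
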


\begin{proof}
Consider $\sigma, \tau \in \mathfrak{S}_n$ such that $\tau \lessdot \sigma $. Suppose that $ i, j\in \Des(\sigma)$ are such that $|i-j|>1$, and $\sigma s_i=\tau $. 

Since $ i, j\in \Des(\sigma)$, and letters $i$ and $j$ commute with each other, there exist reduced decompositions of $\sigma$ such that $\sigma = \alpha s_is_j $ and $\sigma=\alpha s_js_i$. We want to be able to count the number of reduced words of $\sigma$ that have a commutation move in those last two positions that use the letters $i$ and $j$. This means we really need to count the number of ways we can write $\alpha$. There are $|\calR(\sigma s_is_j)|$ ways to write $\alpha$.

We note that all vertices that end in the letters $j~i$ are contained in the subgraph $G(\sigma s_i)$, where $\tau = \sigma s_i$. All vertices that end in the letters $i~j$ are contained in the subgraph $G(\sigma s_{j})$. Then the number of commutation edges between these subgraphs are equal to $|\calR(\sigma s_is_j)|$.

The same reasoning allows us to conclude that for all $k\in I_i$, then there are $|\calR(\sigma s_k s_i)|$ ``new" commutation edges incident to vertices in $G(\tau)$. 

Similarly, if  $I_i=\emptyset$ then we cannot write $\sigma =\beta s_j s_i = \beta s_i s_j$ for this particular $i$, then the only possible commutation edges that can be incident to $G(\tau)$ must be fully contained in the induced subgraph.
\end{proof}

The following is an immediate consequence of Proposition \ref{braidedge10}.

\begin{corollary}\label{coro_edge2}
Let $\sigma \in \mathfrak{S}_n$. Then if $d_\calC(v)$ counts the number of commutation edges incident to $v$ in $G(\sigma)$,
\[
\sum_{v\in G(\sigma)} d_\calC(v) =\left ( \sum_{i\in \Des(\sigma)}\sum_{u\in G(\sigma s_i)}d_\calC(u) \right )+ 2 \cdot \sum_{i,j\in \Des(\sigma) ~i-j>1}|\calR(\sigma s_j s_i)|,
\]
where we consider $d_\calB(u)$ in $G(\sigma s_i)$ on the right hand side, and $\overline{u}=ui \in G(\sigma)$ on the left hand side.
\end{corollary}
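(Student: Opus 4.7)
The plan is to derive Corollary~\ref{coro_edge2} by summing the local count from Proposition~\ref{braidedge10} over all descents, and then carefully accounting for the factor of two that arises in passing from edge counts to degree sums. The underlying partition $V(G(\sigma))=\bigsqcup_{i\in\Des(\sigma)} V(G(\sigma s_i))$ from Theorem~\ref{graph_partition} will let me classify each commutation edge of $G(\sigma)$ as either \emph{internal} to some $G(\sigma s_i)$ (Case~1 of Proposition~\ref{newEDGE}) or \emph{crossing} between two distinct pieces $G(\sigma s_i)$ and $G(\sigma s_j)$ (Case~2).

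First I would rewrite $\sum_{v\in G(\sigma)} d_\calC(v)$ as $2\cdot |E_\calC(G(\sigma))|$, i.e.\ twice the number of commutation edges of $G(\sigma)$, and split this count along the internal/crossing dichotomy. By Proposition~\ref{graph1}, any commutation edge whose endpoints both lie in $V(G(\sigma s_i))$ is already an edge of the induced subgraph $G(\sigma s_i)$, so the contribution of the internal edges to the total degree sum is exactly $\sum_{i\in\Des(\sigma)}\sum_{u\in G(\sigma s_i)} d_\calC(u)$, where $d_\calC(u)$ on the right is measured inside $G(\sigma s_i)$.

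For the crossing contribution, I would invoke Proposition~\ref{braidedge10}: for each pair $i,j\in \Des(\sigma)$ with $|i-j|>1$, the number of commutation edges joining $V(G(\sigma s_i))$ to $V(G(\sigma s_j))$ equals $|\calR(\sigma s_is_j)|=|\calR(\sigma s_js_i)|$. Summing over ordered pairs with $i-j>1$ counts each such unordered pair exactly once, and each crossing edge contributes $2$ to the degree sum (once at each endpoint, which lie in different blocks of the partition). This yields the term $2\cdot \sum_{i,j\in\Des(\sigma),\,i-j>1}|\calR(\sigma s_js_i)|$.

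The only subtlety, and the main bookkeeping obstacle, is making sure the crossing edges are not double-counted against the internal edges and that the indexing convention $i-j>1$ matches the unordered-pair enumeration used in Proposition~\ref{braidedge10}. Once this is checked, combining the two contributions gives the stated recursion, and the corollary follows immediately.
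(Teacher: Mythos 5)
Your proposal is correct and is essentially the argument the paper intends: the paper states the corollary as an immediate consequence of Proposition~\ref{braidedge10}, and your internal/crossing decomposition with the factor of two from passing between edge counts and degree sums is exactly the bookkeeping that makes that implication precise.
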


Note that the subgraphs in this section have been, in many ways, the most natural method of partitioning our graphs into subgraphs. The related edge degree recursions are certainly useful, but as we see in Section \ref{sec:motivation}, they only get us so far when we consider our main motivating problem. 

However, as the results in this section are brand new tools, we are interested in what applications they can have to other problems related to $\calR(\sigma)$ and $W(\Sym_n)$.

\section{Writing reduced words based on the descent set}\label{sec:shuffle_sub}

As we worked on various examples proofs in the previous section, we began asking questions about how we were generating the set $\calR(\sigma)$. If we had a single reduced word for $\sigma$, what shortcuts we were taking to generate these sets quickly, and what other types of sub-structures did we notice?

\begin{example}\label{blockededges}
Consider the permutation $\sigma = [2431]=s_1s_2s_3s_2$ with descent set $\Des(\sigma) = \{ 2,3\}$.
\begin{figure}[H]
\begin{center}
\begin{tikzpicture}[node distance=2cm]
\node(v1)	 			            	{$1232$};
\node(v2)       [above of=v1] 		    {$1323$};
\node(v3)      [right of =v2]  	{$3123$};

\draw[dashed] (v1) --(v2);
\draw (v2) --(v3);
\end{tikzpicture}
\vspace{0.1in}
\caption{The graph $G([2431])$}
\label{fig:C4E1}
\end{center}
\end{figure}

We note that the braid edge itself is a copy of $G([1432])$, which is $w_0^{(3,2)}$, and has the same descent set as $\sigma$. Starting with the words 1323 and 1232, we attempt to move the letter 1 to the right. We note that after moving the letter 1, we do not have a second full copy of $G([1432])$. We would need both the vertices 2132 and 3123, as well as a braid edge between them. There are two problems with this: there is no braid move between those reduced words, and 2132 is not a reduced word of $\sigma$.

Next we consider $\sigma = [32154]=s_4s_1s_2s_1$ with descent set $\Des(\sigma) = \{1,2,4 \}$. We have 

\begin{figure}[H]
\begin{center}
\begin{tikzpicture}[node distance=2cm]
\node(v1)	 			            	{$4121$};
\node(v2)       [above of=v1] 		    {$4212$};
\node(v3)      [right of=v2]        	{$2412$};
\node(v4)      [right of=v1]        	{$1421$};
\node(v5)      [right of=v3]        	{$2142$};
\node(v6)      [right of=v4]  	        {$1241$};
\node(v7)      [right of=v5]        	{$2124$};
\node(v8)      [right of=v6]  	        {$1214$};

\draw[dashed] (v1) --(v2);
\draw (v2) --(v3);
\draw (v1) --(v4);
\draw (v5) --(v3);
\draw (v6) --(v4);
\draw (v5) --(v7);
\draw (v6) --(v8);
\draw[dashed] (v7) --(v8);
\end{tikzpicture}
\vspace{0.1in}
\caption{The graph $G([32154])$}
\label{fig:C4E0}
\end{center}
\end{figure}

In this picture, we have four copies of the vertex set of $G([321])$, with 4 sitting in different spots through the reduced words for $121$. We focus on this permutation because the longest string of consecutive elements in $\Des(\sigma)$ is the descent set for $w_0^{(3,1)}$. As the letter 4 moves through the word from left to right, we have effectively blocked braid edges from appearing in the middle portion of the graph. 
\end{example}

The example in Figure \ref{fig:C4E0}, and notation found in  \cite{Tenner_Intervals} leads us to the following idea: rather than focus on intervals $[\sigma s_i, \sigma]$ in the weak order lattice, we can focus on $[\tau, \sigma]$ such that $\tau^{-1}\sigma = w_0^{(k,i)}$, for some $i,k\in \N$.  

\begin{lemma}\label{breakingPerms}
Let $\sigma \in \mathfrak{S}_n$. Suppose that $\Des(\sigma)$ contains a string of $m$ consecutive elements, with smallest element in the string $i$: $\{i,i+1,\ldots, i+m-1\}$. Then there exists a $u\in \Sym_n$ such that $u\leq \sigma$, and $u^{-1}\sigma = w_0^{(k,i)}$, for $k=m+1$.
\end{lemma}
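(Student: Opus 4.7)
The plan is to produce $u$ explicitly by reversing a block of $\sigma$'s one-line notation. Set $u := \sigma \cdot w_0^{(m+1,i)}$. Because $w_0^{(m+1,i)}$ is an involution, this choice already gives $u^{-1}\sigma = w_0^{(m+1,i)}$, so the only substantive thing to verify is that $u \leq \sigma$ in the right weak order, i.e., that $\ell(\sigma) = \ell(u) + \ell(w_0^{(m+1,i)}) = \ell(u) + \binom{m+1}{2}$. Unpacking the right multiplication: position $j$ of $u$ equals $\sigma_{w_0^{(m+1,i)}(j)}$, so $u$ agrees with $\sigma$ off $[i, i+m]$ and has the block at positions $i, i+1, \ldots, i+m$ reversed relative to $\sigma$. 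The hypothesis $\{i, i+1, \ldots, i+m-1\} \subseteq \Des(\sigma)$ forces $\sigma_i > \sigma_{i+1} > \cdots > \sigma_{i+m}$, so the corresponding block of $u$ is strictly increasing.

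The heart of the proof is then a length comparison by partitioning inversions. For pairs $(j,k)$ with $j<k$, I separate into both-inside-$[i,i+m]$, both-outside, and mixed. The both-outside contributions to $\ell(\sigma)$ and $\ell(u)$ agree because the one-line notations match off the block. The both-inside contributions are $\binom{m+1}{2}$ for $\sigma$ (all pairs invert, since the block is strictly decreasing) and $0$ for $u$ (strictly increasing block). For the mixed case, the critical observation is that the multiset of values occupying the block positions is the same in $\sigma$ and $u$; hence for any fixed outside position $j$ with common value $\sigma_j = u_j$, the number of block entries that form an inversion with position $j$ depends only on that value and on whether $j$ lies to the left or right of the block, not on the internal order of the block. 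So the mixed contributions coincide, and subtracting gives $\ell(\sigma) - \ell(u) = \binom{m+1}{2}$, which is exactly $\ell(w_0^{(m+1,i)})$.

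The main subtlety is simply the mixed-case inversion bookkeeping; there is no deeper obstacle once the right multiplication is set up carefully. An alternative route would be to concatenate a reduced word for $u$ with a standard reduced word for $w_0^{(m+1,i)}$ and verify directly that the concatenation is reduced for $\sigma$, but the length-counting approach is cleaner and delivers $u \leq \sigma$ and $u^{-1}\sigma = w_0^{(m+1,i)}$ simultaneously.
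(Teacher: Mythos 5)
Your proposal is correct, and it takes a genuinely different route from the paper. The paper walks down the weak order explicitly: it repeatedly right-multiplies $\sigma$ by blocks $s_is_{i+1}\cdots s_{i+m-1}$, then $s_is_{i+1}\cdots s_{i+m-2}$, and so on, tracking at each stage which descents survive and how the length drops, until it reaches a permutation $u = \sigma^{(m)}$ with $\ell(u) = \ell(\sigma) - \binom{m+1}{2}$; as a byproduct it records the explicit word $v = i(i+1)i\,(i+2)(i+1)i\cdots$ in $\calR(w_0^{(m+1,i)})$ with $uv \in \calR(\sigma)$, which is the form of the statement that later definitions (e.g., the shuffle construction) actually invoke. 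You instead define $u := \sigma\, w_0^{(m+1,i)}$ outright, observe that $u^{-1}\sigma = w_0^{(m+1,i)}$ is automatic, and reduce everything to the single identity $\ell(\sigma) - \ell(u) = \binom{m+1}{2}$, proved by partitioning inversion pairs into block--block, outside--outside, and mixed, with the mixed count depending only on the multiset of block values. This is cleaner and less error-prone than the paper's iterated-index bookkeeping, and it loses nothing: once the lengths are shown to be additive, concatenating any reduced word of $u$ with any reduced word of $w_0^{(m+1,i)}$ yields a reduced word of $\sigma$, so the explicit $uv \in \calR(\sigma)$ used downstream follows immediately. One tiny presentational point: you should state explicitly that you are using the standard characterization $u \leq \sigma$ in the right weak order if and only if $\ell(\sigma) = \ell(u) + \ell(u^{-1}\sigma)$, since the paper only defines the order via chains of covers; the two are equivalent, but the equivalence is the hinge of your argument.
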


\begin{proof}
Let $\sigma = [a_1 a_2\ldots a_n ] \in \mathfrak{S}_n$. We recall that $i\in \Des(\sigma)$ if and only if $a_i>a_{i+1}$. We also recall that for any $j$, $\sigma s_j = [a_1 a_2\ldots a_{j+1}a_j \ldots a_n]$. 

For any element $i\in \Des(\sigma)$, we recall that $s_i = w_0^{1,i}$. Suppose that $\{i,i+1,\ldots i+m-1 \} \subset \Des(\sigma)$ is a string of consecutive elements where $m\geq 2$. This means that 
\[
a_i>a_{i+1}> \ldots>a_{i+m-1}.
\]

Let $\tau_{\sigma}^{(j,k)}$ be defined as follows: 
\[
\tau_{\sigma}^{(j,k)} := \sigma s_js_{j+1}\ldots s_{j+k-1} = [a_1 \ldots a_{j-1}a_{j+1}\ldots a_{j+k}a_j\ldots a_n],
\]
where $\tau_{\sigma}^{(0,0)}=\sigma$, and  $\tau_{\sigma}^{(j,1)} = \sigma s_j $.

We note that for the set $\{i,i+1,\ldots i+m-1 \} \subset \Des(\sigma)$, if $j=i$ and $k=m$, we have
\[
\tau_{\sigma}^{(i,m)}=\sigma s_is_{i+1}\ldots s_{i+m-1} = [a_1\ldots a_{i-1}a_{i+1}a_{i+2}\ldots a_{i+m}a_i\ldots a_n].
\]
We see that $\{i,i+1,\ldots i+m-2 \} \subset \Des(\tau_{\sigma}^{(i,m)})$, while $i+m-1\notin \Des(\tau_{\sigma}^{(i,m)})$. 

Furthermore, $\ell(\tau_{\sigma}^{(i,m)})= \ell(\sigma) -m$. Since $i\in \Des(\sigma)$, $\ell(\tau_{\sigma}^{(i,1)})=\ell(\sigma)-1$. We ~ still have $i+1\in \Des(\tau_{\sigma}^{(i,1)})$, and we can see that $\ell(\tau_{\sigma}^{(i,2)}) = \ell(\sigma)-2$. Inductively, this process continues until we have $\ell(\tau_{\sigma}^{(i,m)}) = \ell(\sigma)-m$. 

Let $\sigma^{(1)} =\tau_{\sigma}^{(i,m)}$. We now consider 
\[
\tau_{\sigma^{(1)}}^{(i,m-1)}= \sigma^{(1)}s_is_{i+1}\ldots s_{i+m-2}  = [a_1\ldots a_{i-1}a_{i+2}\ldots a_{i+m}a_{i+1}a_i\ldots a_n].
\]
We note that 
\[i+m-1, i+m-2 \notin \Des(\tau_{\sigma^{(1)}}^{(i,m-1)}) \mbox{ and }\{i,i+1,\ldots i+m-3 \} \subset \Des(\tau_{\sigma^{(1)}}^{(i,m-1)}).
\]
We also have $\ell(\tau_{\sigma^{(1)}}^{(i,m-1)}) = \ell(\sigma^{(1)}) - (m-1) = \ell(\sigma) - m -(m-1)$ using the same argument on descents as before. Let $\sigma^{(2)} = \tau_{\sigma^{(1)}}^{(i,m-1)}$.

Inductively, for $1\leq x\leq m-3$, we have that the permutation $\sigma^{(x+1)} = \tau_{\sigma^{(x)}}^{(i,m-x)}$ is such that
\[
\sigma^{(x+1)} =  \sigma^{(x)}s_is_{i+1}\ldots s_{i+m-(x+1)}= [a_1\ldots a_{i-1}a_{i+x}\ldots a_{i+m}a_{i+x-1}\ldots a_{i+1}a_i\ldots a_n],
\]
where $\{i,\ldots, i+m-(x+2) \}  \subset \Des(\sigma^{(x+1)} )$, $\{i+m-(x+1), \ldots , i+m-1 \} \cap \Des(\sigma) = \emptyset$, and $\ell(\sigma^{(x+1)} ) = \ell(\sigma) - \sum_{b=0}^{m-x}m-b$. 

Now consider $\sigma^{(m-2)} = \tau_{\sigma^{(m-3)}}^{(i,m-(m-3))}$:
\[
\sigma^{(m-2)} = [a_1\ldots a_{i-1}a_{i+m-2}a_{i+m-1}a_{i+m}a_{i+m-3}\ldots a_{i+1}a_i\ldots a_n].
\]
From the above one line notation, we see that $\sigma^{(m)}:=\sigma^{(m-2)} s_i s_{i+1}s_i$ such that $\{i,i+1,\ldots i+m-2 \} \cap \Des(\sigma^{(m)}) = \emptyset$. Additionally, 
\[
\ell(\sigma^{(m)}) =\left ( \ell(\sigma) - \sum_{b=0}^{m-3}m-b\right ) -3 =  \ell(\sigma) - \binom{m+1}{2}.
\]

Let $u\in \calR(\sigma^{(m)})$. Note that if $\sigma^{(m)}$ is the identity, then $u$ is simply the empty word. In either case, by tracing our products from $\sigma^{(m)}$ back to $\sigma^{(1)}$, let $v$ be the word
\[
i(i+1)i~(i+2)(i+1)i\ldots (i+k-1)(i+k-2)\ldots (i+1)i.
\]
We see that $\ell(v) = \binom{m+1}{2}$, and that we can verify that $v\in \calR(w_0^{(m+1,i)})$. 
By construction, we have $uv\in \calR(\sigma)$ as desired.
\end{proof}

This result leads us to the following, more general theorem.

\begin{theorem}\label{thm:descentsetbreak}
    Let $\sigma\in \mathfrak{S}_n$. Suppose that $\Des(\sigma)$ can be partitioned into maximal blocks of consecutive entries, $B_1, B_2,\ldots B_m$. Let $\min(B_j) = b_j$ for all $1\leq j\leq m$. Then
    \[
    \sigma = \tau \prod_{i=1}^m w_0^{(|b_j|+1,b_j)}
    \]
    for some $\tau \leq \sigma$.
\end{theorem}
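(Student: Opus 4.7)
The plan is to peel the blocks off of $\sigma$ one at a time by repeatedly invoking Lemma \ref{breakingPerms}, and then to reorganize the resulting product using the commutation relation \eqref{eq:1}. Set $\sigma_0 := \sigma$ and, for each $j = 1, 2, \ldots, m$, apply Lemma \ref{breakingPerms} to the consecutive descent string $B_j$ inside $\sigma_{j-1}$, producing $\sigma_j$ with $\sigma_{j-1} = \sigma_j \cdot w_j$ and $\ell(\sigma_j) = \ell(\sigma_{j-1}) - \binom{|B_j|+1}{2}$, where $w_j := w_0^{(|B_j|+1,\,b_j)}$.

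For this to be legal at each stage I must verify that $B_j \subseteq \Des(\sigma_{j-1})$. The key bookkeeping observation is that the construction inside the proof of Lemma \ref{breakingPerms} applied to a block $B_k$ uses only the generators $s_{b_k}, \ldots, s_{b_k + |B_k| - 1}$, and therefore rearranges only the entries of the one-line notation within the positions $[b_k,\, b_k + |B_k|]$. Because $B_1, \ldots, B_m$ are the \emph{maximal} blocks of consecutive entries of $\Des(\sigma)$, the inequality $b_{k+1} \geq b_k + |B_k| + 1$ holds and the intervals $[b_k,\, b_k + |B_k|]$ are pairwise disjoint. Consequently the entries of $\sigma_{j-1}$ in positions $b_j, \ldots, b_j + |B_j|$ still coincide with those of $\sigma$, giving the same strictly decreasing sequence $a_{b_j} > a_{b_j+1} > \cdots > a_{b_j+|B_j|}$ and therefore $B_j \subseteq \Des(\sigma_{j-1})$.

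After all $m$ iterations one has $\sigma = \sigma_m \cdot w_m w_{m-1} \cdots w_1$. To match the order in the theorem, I would note that whenever $j < k$ every generator appearing in $w_j$ has index at most $b_j + |B_j| - 1$ while every generator in $w_k$ has index at least $b_k \geq b_j + |B_j| + 1$, so the indices differ by at least $2$ and relation \eqref{eq:1} forces $w_j$ and $w_k$ to commute. Pairwise commutation lets me reverse the order and write $\sigma = \tau \cdot w_1 w_2 \cdots w_m$ with $\tau := \sigma_m$, which is the claimed factorization. Summing the length identities from each iteration gives $\ell(\sigma) = \ell(\tau) + \sum_{j=1}^m \ell(w_j) = \ell(\tau) + \ell(w_1 \cdots w_m)$, which certifies $\tau \leq \sigma$ in the weak order via the characterization of the cover relations recalled in Section \ref{sec:background}.

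The main obstacle is the disjoint-support bookkeeping: certifying that successive invocations of Lemma \ref{breakingPerms} never disturb a future block. Maximality of the $B_j$'s is precisely what provides the required non-descent gap between consecutive blocks, and once that is pinned down the rest of the argument is a clean length count combined with the commutation rearrangement.
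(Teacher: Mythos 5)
Your proof is correct and follows essentially the same route as the paper: iterate Lemma \ref{breakingPerms} over the maximal blocks and use the disjoint supports of the resulting $w_0^{(|B_j|+1,\,b_j)}$ factors to commute them into the desired order. You supply more detail than the paper does---in particular the verification that each later block survives as a string of descents of the intermediate permutations, and the length count certifying $\tau \leq \sigma$ in the weak order---but the underlying argument is the same.
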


\begin{proof}
    For each block of consecutive descents, we apply Lemma \ref{breakingPerms} to produce \[\sigma = \tau_1 w_0^{(|b_1|+1,b_1)}.\]
    We apply  Lemma \ref{breakingPerms} again to arrive at 
    \[
    \sigma = \tau_2 w_0^{(|b_2|+1,b_2)}w_0^{(|b_1|+1,b_1)}.\] 
    We note that as the descent sets are disjoint, then
    \[
    w_0^{(|b_2|+1,b_2)}w_0^{(|b_1|+1,b_1)} = w_0^{(|b_1|+1,b_1)}w_0^{(|b_2|+1,b_2)}
    \]
    so the order does not matter.
    
    We repeat this process inductively until we have worked our way down the lattice to $\tau$, and the product 
     \[
    \sigma = \tau \prod_{i=1}^m w_0^{(|b_j|+1,b_j)}
    \]
    as desired.
\end{proof}

Using Theorem \ref{thm:descentsetbreak}, we have the following classification of the shortest permutation with a fixed descent set.

\begin{corollary}\label{cor:minimalperm}
    Let $\sigma\in \mathfrak{S}_n$. Suppose that $\Des(\sigma)$ can be partitioned into maximal blocks of consecutive entries, $B_1, B_2,\ldots B_m$. Then
    \[
    \sigma' = \prod_{i=1}^m w_0^{(|b_j|+1,b_j)}
    \]
    is the permutation with minimal length such that $\Des(\sigma') = \Des(\sigma)$.
\end{corollary}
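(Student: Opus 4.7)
The plan is to combine Theorem \ref{thm:descentsetbreak} (to obtain a uniform lower bound on the length of any permutation with the prescribed descent set) with a direct check on the one-line notation that the candidate $\sigma'$ saturates that bound and has the correct descent set. Given any $\tilde\sigma \in \Sym_n$ with $\Des(\tilde\sigma) = \bigcup_{j=1}^m B_j$, Theorem \ref{thm:descentsetbreak} produces a factorization $\tilde\sigma = \tau \prod_{j=1}^m w_0^{(|B_j|+1,\, b_j)}$ for some $\tau \leq_W \tilde\sigma$. The construction in the proof of Lemma \ref{breakingPerms} realizes this via a descending chain of weak-order covers, so the factorization is length-additive, yielding
\[
\ell(\tilde\sigma) \;=\; \ell(\tau) + \sum_{j=1}^m \binom{|B_j|+1}{2} \;\geq\; \sum_{j=1}^m \binom{|B_j|+1}{2}.
\]

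Next I would show that taking $\tau = e$ recovers exactly the given $\sigma'$ and saturates this bound. Since the $B_j$ are maximal runs of consecutive descents, successive blocks satisfy $\min(B_{j+1}) \geq \max(B_j) + 2$, so the active positional windows $[b_j,\, b_j + |B_j|]$ of the factors $w_0^{(|B_j|+1,\, b_j)}$ are pairwise disjoint. All simple transpositions appearing in reduced words of distinct factors therefore commute, so the factors themselves commute and $\ell(\sigma') = \sum_j \binom{|B_j|+1}{2}$. By the same disjointness, the one-line notation of $\sigma'$ is obtained by independently reversing each window $[b_j,\, b_j + |B_j|]$. Inside each reversed window the descents occur at exactly the positions of $B_j$, and at each boundary (between a reversed window and either the identity region or an adjacent reversed window) a quick comparison of the smallest reversed value $b_j$ against the value immediately to its right forces an ascent. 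Hence $\Des(\sigma') = \bigcup_j B_j = \Des(\sigma)$, and $\sigma'$ realizes the lower bound.

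The main obstacle is the boundary analysis in the descent-set verification: one must handle the adjacent case $b_{j+1} = \max(B_j) + 2$ separately, since then the last position of the $j$-th reversed window sits immediately left of the first position of the $(j+1)$-st, and the inequality $b_j < b_{j+1} + |B_{j+1}|$ is needed to rule out a spurious descent at that boundary. Everything else is routine bookkeeping from the explicit one-line notation of $w_0^{(k,i)}$ given in Definition \ref{w0equiv}.
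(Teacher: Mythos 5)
Your proposal is correct and takes essentially the same route as the paper: both arguments rest on the factorization $\tilde\sigma = \tau \prod_j w_0^{(|B_j|+1,\,b_j)}$ from Theorem \ref{thm:descentsetbreak} and then set $\tau = e$. You additionally make explicit the length-additivity of that factorization and the verification that $\Des(\sigma')=\Des(\sigma)$ from the one-line notation (including the adjacent-window boundary case), details the paper's two-line proof leaves implicit.
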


\begin{proof}
    From Theorem \ref{thm:descentsetbreak}, we note that for $\Des(\sigma)$, we must have 
    \[
    \prod_{i=1}^m w_0^{(|b_j|+1,b_j)}
    \]
    as part of a reduced decomposition of $\sigma$. Then $\tau = e$ produces the permutation with minimal length with this descent set.
\end{proof}

\section{Shuffles Subgraphs}\label{subsec:shuffles}

Using ideas from the previous section, we now discuss how to consider subgraphs in $G(\sigma )$ that may not be of the form $G(\sigma s_i)$, $i\in \Des(\sigma )$.

\begin{definition}\label{shuffledef}
Let $\sigma \in \mathfrak{S}_n$ be such that $[i,i+k-2]\subset \Des(\sigma)$ is a set of consecutive elements. From Lemma \ref{breakingPerms}, we note that there are permutations $\alpha, \beta \in \mathfrak{S}_n$ such that $\beta =w_0^{(k,i)}$, and $u=u_1u_2\ldots u_{l(\alpha)}\in \calR(\alpha)$ and $v=v_1v_2\ldots v_{l(\beta)}\in \calR(w_0^{(k,i)})$ can be concatenated to produce $uv\in \calR(\sigma)$. That is, $\sigma = \alpha \beta$. In order to distinguish the letters, we color the letters of $u$ blue, and the letters of $v$ red.

A \JEem{shuffle} of the letters of $uv$ is defined in the following way:
\begin{enumerate}
    \item A commutation or braid move using only the blue letters of $u$, or using only the red letters of $v$.
    \item A commutation move using one blue letter $u_i$ of $u$ and one red letter $v_j$ of $v$.
    \item A braid move that uses two blue letters $u_i u_{i+1}$ of $u$ and one red letter $v_j$ of $v$, or vice versa.
\end{enumerate}
\end{definition}

None of the above shuffle types change the color of the letters. After any finite sequence of shuffles of any type, we no longer have the word $uv$. We still refer to each new shuffle as a shuffle of the letters of $uv$. 

\begin{example}
Consider $\sigma = [165324]=s_4s_5 s_2s_3s_4s_2 s_3 s_2$. We can use Lemma \ref{breakingPerms} to write $45\in \calR(\alpha)$, $234232\in \calR(\beta)$, and ${\color{blue}45}{\color{red}234232}\in \calR(\sigma)$.
\begin{enumerate}
    \item Using a shuffle of type 1, we have ${\color{blue}45}{\color{red}434234}\in \calR(\sigma)$.
    \item Using a shuffle of type 2, we have ${\color{blue}4}{\color{red}2}{\color{blue}5}{\color{red}34232}\in \calR(\sigma)$.
    \item Using a shuffle of type 3, we have ${\color{blue}54}{\color{red}534234}\in \calR(\sigma)$
\end{enumerate}
\end{example}

We see that for the first type of shuffle, we start with $u\in \calR(\alpha)$ and shuffle letters to get $a\in \calR(\alpha)$. For a fixed pair $a\in \calR(\alpha)$ and $b\in \calR(\beta)$, the second type of shuffle commutes where the letters of $a$ and $b$ sit. The third shuffle type is a mix of blue letters forming a word $a'$, and red letters form a word $b'$. However, $a'\notin \calR(\alpha)$ and $b'\in \calR(\beta)$, which makes the words formed after a shuffle of type 3 more difficult to discuss.

We know that we can start with any word in $\calR(\sigma)$, and generate the full set by performing all possible commutation and braid moves among the letters. So we can use the letters of $uv$ to fully generate $\calR(\sigma)$ in the standard way. 

The question is whether we can keep track of the letters of $u\in \calR(\alpha)$ and $v\in \calR(\beta)$ as we perform the shuffle process?

\begin{lemma}\label{permsSuffle}
Let $\alpha, \beta, \sigma\in \mathfrak{S}_n$, with $u\in \calR(\alpha)$, $v\in \calR(\beta)$ and $uv\in \calR(\sigma)$ as described in Lemma \ref{breakingPerms} and Definition \ref{shuffledef}. We can fully construct $\calR(\sigma)$ by looking at all possible ways that the letters of $uv$ can shuffle through each other. 
\end{lemma}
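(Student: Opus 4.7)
The plan is to reduce the claim to the classical fact that any two reduced decompositions of a fixed $\sigma \in \Sym_n$ are connected by a finite sequence of commutation moves \eqref{eq:1} and braid moves \eqref{eq:2}, applied to the underlying (uncolored) words. Since Lemma \ref{breakingPerms} supplies the distinguished element $uv \in \calR(\sigma)$, this fact already says that every word in $\calR(\sigma)$ is reachable from $uv$ by such a sequence of elementary moves; the work is then to reinterpret each elementary move as a shuffle.

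The remaining step is to match these elementary moves to the shuffle types of Definition \ref{shuffledef} via a case analysis on the colors of the letters involved. A commutation move swaps two adjacent letters, whose color pattern is either monochromatic (shuffle type~1) or bichromatic (shuffle type~2). A braid move acts on an adjacent triple $s_i s_{i+1} s_i \leftrightarrow s_{i+1} s_i s_{i+1}$, whose color pattern lies in $\{B,R\}^3$; the two monochromatic triples $BBB$ and $RRR$ are shuffles of type~1, while each of the remaining six patterns contains exactly two letters of one color and one of the other, and therefore qualifies as a shuffle of type~3. No other commutation or braid move is possible, so every elementary move is a shuffle.

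Combining the two observations, any sequence of elementary moves starting from $uv$ is also a sequence of shuffles of the letters of $uv$, and conversely. Because each move preserves both the product in $\Sym_n$ and the length, the shuffle orbit of $uv$ is contained in $\calR(\sigma)$; by the classical connectivity result, it equals all of $\calR(\sigma)$.

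The main obstacle is bookkeeping rather than combinatorial content: one must be sure that after an arbitrary sequence of shuffles the resulting word can still be consistently two-colored, so that the color-based classification of the next elementary move is well-defined. The remark immediately following Definition \ref{shuffledef} asserts that no shuffle changes the color of any letter, and each of the three shuffle types acts on at most three positions without merging or creating color classes; therefore the coloring persists along every trajectory, and the case analysis above applies uniformly at each step. With that confirmed, the lemma follows at once.
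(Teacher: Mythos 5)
Your proposal is correct and follows essentially the same route as the paper: both arguments invoke the connectivity of $G(\sigma)$ under commutation and braid moves to reach an arbitrary $w\in\calR(\sigma)$ from $uv$, and then observe that every elementary move is a shuffle because the colors persist along the path. Your explicit enumeration of the color patterns (monochromatic versus bichromatic pairs, and the six mixed triples falling under type~3) is a slightly more detailed justification of the step the paper states more briefly, but it is the same argument.
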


\begin{proof}
Consider $uv\in \calR(\sigma)$, where $u = u_1\ldots u_{l(\alpha)}\in \calR(\alpha)$ and $v = v_1\ldots v_{l(\beta)}\in \calR(\beta)$. We color all the letters descended from $u$ blue, and all the letters descended from $v$ red. 

Let $w\in \calR(\sigma)$ be an arbitrary word that is distinct from $uv$. We know that there is a finite sequence of commutation and braid moves that transform $uv$ into $w$. Since $G(\sigma)$ is connected, let us consider this as a path of $m$ vertices in $G(\sigma)$.
\[
(uv) - (a_2) - \cdots -(a_{m-1}) - (w)
\]

To travel from $uv$ to $a_2$, we either perform a commutation move or a braid move of the letters of $uv$. This is a shuffle of the letters of $uv$. 

Inductively, each $a_j$ in this path has all the letters colored blue and red, since no shuffle type changes the colors of the letters. We always have the same number of blue letters and red letters, since none of the shuffles recolors the letters. Thus, the letters of $w$ is a mix of $\ell(\alpha)$ blue letters and $\ell(\beta)$ red letters.

Because $w$ was an arbitrary element of $\calR(\sigma)$, every element of the set is formed from a series of shuffles of the letters of $uv$.
\end{proof}

Our work on splitting $\sigma$ into pieces $\alpha$ and $\beta$, and shuffling the letters of their respective reduced words, is similar to permutation inflations. Permutation inflations are related to grid drawings of permutations, and use patterns in the one line notation in consecutive spots to write reduced words. For more information on inflations, we recommend \cite{Alb} or \cite{SDYK}. 

We pursued this new shuffle method rather than the inflations because we could use our $\alpha$ and $\beta$ construction for any arbitrary permutation, rather than being restricted to a particular family. 

\begin{example}
With the knowledge from Lemma \ref{permsSuffle}, we look at a slightly more complex example of $G(\sigma)$, though we do not draw the full graph: $\sigma = [3215476]=s_4s_6 s_1 s_2 s_1$ with descent set $\Des(\sigma) = \{1,2,4,6 \}$. The longest string of consecutive descents is $\{1,2\}$. 

From Lemma \ref{breakingPerms} $\alpha = [1235476]$ and $\beta = [3214567]$. Note that as we shuffle $a\in \calR(\alpha)$ and $b\in \calR(\beta)$, we only perform shuffle moves of the first and second type. We consider ${\color{blue}46}{\color{red}121}\in \calR(\sigma)$ as our starting point.

For any reduced word of $\sigma$, we can select where $121$ or $212$ sits, and then the remaining two spots can have either $4$ or $6$. Thus we have $|\calR(\sigma)| = 2\binom{5}{3} \cdot 2 =40$.

We can begin to construct the graph as follows:
\begin{figure}[H]
\begin{center}
\begin{tikzpicture}[node distance=2cm]
\node(v1)	 			            	{${\color{blue}46}{\color{red}121}$};
\node(v2)       [above of=v1] 		    {${\color{blue}46}{\color{red}212}$};
\node(v3)      [right of=v2]        	{${\color{blue}64}{\color{red}212}$};
\node(v4)      [right of=v1]        	{${\color{blue}64}{\color{red}121}$};
\node(v5)      [below right of=v3]        	{${\color{blue}4}{\color{red}2}{\color{blue}6}{\color{red}12}$};
\node(v6)      [below right of=v4]  	        {${\color{blue}4}{\color{red}1}{\color{blue}6}{\color{red}21}$};
\node(v7)      [right of=v5]        	{${\color{blue}6}{\color{red}2}{\color{blue}4}{\color{red}12}$};
\node(v8)      [right of=v6]  	        {${\color{blue}6}{\color{red}1}{\color{blue}4}{\color{red}21}$};

\node(v9)      [below right of=v7]        	{${\color{blue}4}{\color{red}21}{\color{blue}6}{\color{red}2}$};
\node(v10)      [below right of=v8]  	        {${\color{blue}4}{\color{red}12}{\color{blue}6}{\color{red}1}$};
\node(v11)      [right of=v9]        	{${\color{blue}6}{\color{red}21}{\color{blue}4}{\color{red}2}$};
\node(v12)      [right of=v10]  	        {${\color{blue}6}{\color{red}12}{\color{blue}4}{\color{red}1}$};

\draw[dashed] (v1) --(v2);
\draw (v2) --(v3);
\draw (v1) --(v4);
\draw [dashed] (v3) --(v4);
\draw (v5) --(v2);
\draw (v6) --(v1);
\draw (v7) --(v3);
\draw (v8) --(v4);
\draw (v5) --(v9);
\draw (v6) --(v10);
\draw (v7) --(v11);
\draw (v8) --(v12);
\end{tikzpicture}
\vspace{0.1in}
\caption{Part of the graph $G([3215476])$}
\label{fig:C4E2}
\end{center}
\end{figure}

The square of vertices to the far left can be viewed as a copy of the standard product graph $G([1235476]) \times G([3214567])$. The commutation edges in that square are copies of the single edge from $G([1235476])$, while the braids are copies of the single braid edge in $G([3214567])$. As we shuffle the elements 4 and 6 to the right, notice that we do not have edges inherited from $G([1235476])$ or $G([3214567])$ anymore. We have the vertex set of $G([1235476]) \times G([3214567])$, but no internal edges.

We would continue moving forward in this manner, sometimes with those internal edges present, but most of the time they are not.
\end{example}

There is no reason that we need to stop at splitting $\sigma$ into two pieces to work with shuffles, as we see in the following theorem:

\begin{theorem}\label{thm:spliting_perms}
    Let $\sigma \in \Sym_n$, such that $\Des(\sigma) = \cup_{i=1}^m D_i$ where the $D_i$'s are the maximal subsets of $\Des(\sigma)$ containing only consecutive elements. Then
    \begin{enumerate}
        \item There exist elements $\beta_i$ such that $\Des(\beta_i) = D_i$ for $1\leq i \leq m$, $\beta_i = w_0^{(k_i,a_i)}$ for some $k_i,a_i\in\N $, and $\sigma = \alpha \beta_1 \ldots \beta_m$.
        \item For any $u\in \calR(\alpha)$ and selection $v_i\in \calR(\beta_i)$, there exists a reduced word of $\sigma$ of the form $uv_1\ldots v_m$. 
        \item We can fully construct $\calR(\sigma)$ by looking at all the possible ways that the letters of $uv_1\ldots v_m$ can shuffle through each other.
    \end{enumerate}
\end{theorem}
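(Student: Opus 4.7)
The plan is to chain together Theorem \ref{thm:descentsetbreak}, a commutativity observation on the $\beta_i$'s, and an inductive extension of Lemma \ref{permsSuffle}. For part (1), I would apply Theorem \ref{thm:descentsetbreak} directly: that theorem yields $\sigma = \tau \prod_{j=1}^{m} w_0^{(|B_j|+1,\,b_j)}$ where $B_j = D_j$ are the maximal consecutive blocks of $\Des(\sigma)$ and $b_j = \min(D_j)$. Setting $\alpha := \tau$, $\beta_j := w_0^{(k_j,a_j)}$ with $k_j = |D_j|+1$ and $a_j = b_j$, Definition \ref{w0equiv} gives $\Des(\beta_j) = [a_j, a_j + k_j - 2] = D_j$ as required.

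For part (2), the essential observation is that the factors $\beta_1, \ldots, \beta_m$ pairwise commute. The support of $\beta_j$ consists of the generators $\{s_p : p \in D_j\}$, and since the $D_j$'s are \emph{maximal} consecutive blocks of $\Des(\sigma)$, any integer in $D_i$ and any integer in $D_j$ (for $i \neq j$) differ by at least $2$. Hence $s_p s_q = s_q s_p$ for all $p \in D_i$, $q \in D_j$. Consequently $\beta_1 \beta_2 \cdots \beta_m$ is independent of the order of the factors, and for any choice $v_j \in \calR(\beta_j)$ the concatenation $v_1 v_2 \cdots v_m$ is a reduced word of $\beta_1 \cdots \beta_m$. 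Because the factorization of Theorem \ref{thm:descentsetbreak} is length-additive, $\ell(\sigma) = \ell(\alpha) + \sum_j \ell(\beta_j)$, so for any $u \in \calR(\alpha)$ the word $uv_1 \cdots v_m$ is a reduced word of $\sigma$.

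For part (3), I would extend Lemma \ref{permsSuffle} from two colors to $m+1$ colors: color the letters of $u$ with color $0$ and the letters of each $v_j$ with color $j$. The definition of shuffle extends in the obvious way: a commutation or braid move now counts as a shuffle regardless of which colors are involved, with the color labels attached positionally to the letters after the move (a braid move $iji \to jij$ turns a two-$X$/one-$Y$ colored triple into a one-$X$/two-$Y$ triple). The key structural point is that no braid move can ever involve letters of two different $v_j$'s, because letters in $v_i$ and $v_j$ have indices differing by at least $2$; all braid interactions between color classes therefore occur between color $0$ and one of the colors $1,\ldots,m$, which fits Definition \ref{shuffledef} after a trivial relabeling. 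Given an arbitrary $w \in \calR(\sigma)$, connectedness of $G(\sigma)$ provides a path of commutation/braid moves from $uv_1 \cdots v_m$ to $w$, and each move is a shuffle in this extended sense, so $w$ is reachable by shuffles.

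The only real obstacle is the bookkeeping for braid moves with multiple colors, and this is defused by the non-adjacency of the $D_j$'s: once we know color $j$ letters (for $j \geq 1$) can only braid against color $0$ letters and other color-$j$ letters, the argument of Lemma \ref{permsSuffle} applies essentially verbatim (one could alternately give a clean induction on $m$, applying Lemma \ref{permsSuffle} to $\alpha \cdot (\beta_1 \cdots \beta_{m-1})$ and $\beta_m$, then splitting the first factor recursively).
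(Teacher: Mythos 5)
Your proposal is correct and follows essentially the same route as the paper, which proves (1) and (2) as an inductive extension of Lemma \ref{breakingPerms} (i.e., via Theorem \ref{thm:descentsetbreak}) and (3) as an inductive extension of Lemma \ref{permsSuffle}. In fact you supply more detail than the paper's two-sentence proof does — in particular the observation that the maximality of the blocks $D_j$ forces the $\beta_j$'s to have pairwise commuting supports, which is the fact that makes both the length-additivity in (2) and the multi-color bookkeeping in (3) go through.
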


\begin{proof}
Note that in the case where $\Des(\sigma) = D_1$ or $\Des(\sigma)= D_1\cup D_2$, we simply recover the results in Lemma \ref{breakingPerms} and  Lemma \ref{permsSuffle}. 

In the case of larger unions, (1) and (2) are inductive extension of the result in Lemma \ref{breakingPerms}, and (3) is an inductive extension of the result in Lemma \ref{permsSuffle}.
\end{proof}

\subsection{Characterizations of simple shuffle graphs}

There were distinct advantages to the subgraphs in Section \ref{sec:subgraphs_des}. Every edge is accounted for, and the partitions of $V(G(\sigma))$ produce induced subgraphs that are isomorphic to $G(\tau)$ for $\tau \leq \sigma$. As we saw in the first part of Section \ref{subsec:shuffles}, partitions of $V(G(\sigma))$ into subsets related to shuffles often missing the edges present in $G(\alpha)$. 

Our motivating question has to do with braid edges in graphs $G(\sigma)$. As we see in Section \ref{sec:motivation}, we can make definitive statements about braid edges if $\sigma = w_0^{(k,i)}$ for some $k,i\in \N$. The shuffle graphs help us divide up $G(\sigma)$ into more parts related to the $w_0^{(k,i)}$'s. This helps us make certain arguments, but also leads to some mess.

\begin{definition}\label{config}
Let $\alpha, \beta_1, \ldots \beta_m, \sigma\in \mathfrak{S}_n$, with $a\in \calR(\alpha)$, $b_i\in \calR(\beta_i)$ for all $1\leq i \leq m$, and $ab_1\ldots b_m\in \calR(\sigma)$ as described in Theorem \ref{thm:spliting_perms}. 

Let $C_I$ denote all the words in $\calR(\sigma)$ where the letters from $a\in \calR(\alpha)$ sit in the positions contained in $I$, where $I \subset [\ell(\sigma)]$ and $|I|=\ell(\alpha)$. We call this a \JEem{configuration}, $C_I$.

In the case that $\alpha = e$, we consider $C_I$ to be formed from all words where the letters from $b_1\in \calR(\beta_1)$ sit in the positions contained in $I$.
\end{definition}

After a shuffle of type 3, we would consider sets labeled $C_I'$. We would still be looking at where the red letters descended from $a$ (or $b_1$) would sit, but we would want to be able to differentiate this set from $C_I$. 

\begin{definition}\label{shufflesubgraph}
Let $\alpha, \beta_1, \ldots \beta_m, \sigma\in \mathfrak{S}_n$, with $u\in \calR(\alpha)$, $v_i\in \calR(\beta_i)$ for all $1\leq i \leq m$, and $uv_1\ldots v_m\in \calR(\sigma)$ as described in Theorem \ref{thm:spliting_perms}. We define $H_I$ to be the induced subgraph of $G$ with the vertex set $C_I$. 
\end{definition}

If there is a commutation move or braid move between letters of the words $a_1, a_2 \in \calR(\alpha)$, then there is an edge between all vertices of the form $a_1(b_1\ldots b_m)$ and $a_2(b_1\ldots b_m)$, for any selection of words $b_i\in \calR(\beta_i)$. Similarly, in a collection or reduced words $C_I$, any time there is a commutation or braid move between $a_1, a_2 \in \calR(\alpha)$, and the letters used in these particular moves are sitting in consecutive spots in the words contained in $C_I$, there is an edge between those vertices for any selection $b_i\in \calR(\beta_i)$.
 
Note that not all configurations result in a subgraph with $|\calR(\alpha)||\calR(\beta)|$ vertices. This is because not all letters of a reduced word of $\alpha$ need to commute with all letters of a word of $\beta$. See Example \ref{blockededges}, Figure \ref{fig:C4E1}. Also note that  we rarely have subgraphs that are isomorphic to $G(\alpha) \times G(\beta)$, as the edges are missing in many cases. See Example \ref{blockededges}, Figure \ref{fig:C4E0}.

For the moment let us consider permutations $\sigma = \alpha \beta \in \mathfrak{S}_n$ where $uv\in \calR(\sigma)$ is such that we only have shuffles of type 1 and 2 between reduced words of $\alpha$ and $\beta$.

\begin{lemma}\label{filteringlemma}
Let $\alpha, \beta, \sigma\in \mathfrak{S}_n$, with $u\in \calR(\alpha)$, $v\in \calR(\beta)$ and $uv\in \calR(\sigma)$ as described in Lemma \ref{breakingPerms} and Definition \ref{shuffledef}. Further suppose that for all $s_i$ in the support of $\alpha$ and all $s_j$ in the support of $\beta$, $|i-j|\geq 1$. That is, there are not any shuffles of the type 3 from Definition \ref{shuffledef}. Then
\[
|\calR(\sigma)|\leq |\calR(\alpha)||\calR(\beta)|\binom{\ell(\sigma)}{\ell(\alpha)}.
\]
\end{lemma}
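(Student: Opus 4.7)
The plan is to exhibit an injection from $\calR(\sigma)$ into the set of triples $(a,b,I)$ with $a \in \calR(\alpha)$, $b \in \calR(\beta)$, and $I \subset [\ell(\sigma)]$ with $|I| = \ell(\alpha)$, since the number of such triples is exactly $|\calR(\alpha)|\,|\calR(\beta)|\,\binom{\ell(\sigma)}{\ell(\alpha)}$. The triple associated with a given $w \in \calR(\sigma)$ will record, respectively, the subword of letters descended from $u$, the subword descended from $v$, and the positions that the $u$-descendants occupy.

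First I would fix the coloring set-up of Definition~\ref{shuffledef}: start from the word $uv \in \calR(\sigma)$ with the letters of $u$ colored blue and the letters of $v$ colored red. By Lemma~\ref{permsSuffle}, every $w \in \calR(\sigma)$ can be obtained from $uv$ by a finite sequence of shuffles, and under the standing hypothesis that $|i - j| \geq 1$ (really $\geq 2$ for a braid) for every $s_i$ in the support of $\alpha$ and $s_j$ in the support of $\beta$, no shuffle of type~3 is available; only shuffles of type~1 and~2 can occur.

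Next I would argue, by induction on the length of the shuffle sequence, that after any such sequence the blue letters read left-to-right form some $a \in \calR(\alpha)$ and the red letters read left-to-right form some $b \in \calR(\beta)$. Type~1 shuffles perform a commutation or braid move within a single color, which transforms $a$ to another element of $\calR(\alpha)$ (or $b$ to another element of $\calR(\beta)$) while keeping the other color's subword unchanged. A type~2 shuffle is a commutation move between one blue letter and one red letter; this swaps their positions but leaves the blue subword and the red subword individually unchanged. Hence the coloring-to-reduced-word correspondence is preserved at every step.

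From this invariant I would define the map $\Phi: \calR(\sigma) \to \calR(\alpha) \times \calR(\beta) \times \binom{[\ell(\sigma)]}{\ell(\alpha)}$ sending $w$ to $(a,b,I)$, where $a$ is the blue subword of $w$, $b$ is the red subword of $w$, and $I$ is the set of positions occupied by blue letters. The map $\Phi$ is injective because from the triple $(a,b,I)$ one can uniquely reconstruct $w$ by placing the letters of $a$ (in order) in the positions of $I$ and the letters of $b$ (in order) in the positions of $[\ell(\sigma)] \setminus I$. The desired inequality then follows by counting the codomain. The main obstacle is the bookkeeping in the inductive step verifying that the two colored subwords remain reduced decompositions of $\alpha$ and $\beta$ after each shuffle; once the non-existence of type~3 shuffles is invoked, this reduces to checking the three allowed shuffle types individually, which is routine.
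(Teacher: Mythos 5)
Your proof is correct and takes essentially the same approach as the paper: the paper partitions $\calR(\sigma)$ into the configuration sets $C_I$ and bounds each by the set $A_I$ of all placements of a pair $(a,b)\in\calR(\alpha)\times\calR(\beta)$ into positions $I$ and its complement, which is exactly your injection $w\mapsto(a,b,I)$. Your explicit induction showing the blue and red subwords remain reduced words of $\alpha$ and $\beta$ under type~1 and type~2 shuffles makes precise the containment $C_I\subseteq A_I$ that the paper asserts more briefly.
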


\begin{proof}
We assume that there are not any shuffles of type 3 in $\calR(\sigma)$. So we only need to consider whether letters from $u\in \calR(\alpha)$ commute with letters in $v\in \calR(\beta)$, or not.

Let $C_I$ be defined as in Definition \ref{config}, where we have chosen the set $I$ such that $|I|=\ell(\alpha)$ and $I\subset \ell(\sigma)$. There are $\binom{\ell(\sigma)}{\ell(\alpha)}$ choices for the set $I$. Furthermore, because there are no shuffles of type 3, we have
\[
\calR(\sigma)= \bigcup_{I\subset [\ell(\sigma)], ~|I|=\ell(\alpha)}C_I.
\]
This is a disjoint union, and some sets $C_I$ could be empty.

Let us define new sets $A_I$ as follows: 
\[
A_I = \{w\mid a\in \calR(\alpha) \mbox{ in positions in }I, b\in \calR(\beta) \mbox{ in the remaining positions} \}.
\]
Each of the sets $A_I$ has size $|A_I|=|\calR(\alpha)||\calR(\beta)|$. 

These sets may contain words which are not in $\calR(\sigma)$. Let $a=a_1\ldots a_{\ell(\alpha)} \in \calR(\alpha)$, and $b=b_1\ldots b_{\ell(\beta)}\in \calR(\beta)$. Suppose that there are letters in these words $a_i$ and $b_j$ such that $|a_i-b_j|=1$. Then any index set $I$ that places the letter $a_i$ to the right of letter $b_j$ means that $A_I$ contains words that are not in $\calR(\sigma)$.

For any index set $I$, $C_I\subset A_I$ so that $|C_I|\leq |A_I|$. There is no way for $C_I$ to be larger than $A_I$, since $A_I$ already contains all possible words formed from $a\in \calR(\alpha)$ in positions in $I$. 

If $C_I=A_I$ for all index sets $I$, then 
\[
|\calR(\sigma)|= |\calR(\alpha)||\calR(\beta)|\binom{\ell(\sigma)}{\ell(\alpha)}.
\]

If there are letters in a word $a\in \calR(\alpha)$ that do not commute with letters in $b\in \calR(\beta)$, then there is some index set $J$ such that $C_J\neq A_J$. In which case,
\[
|\calR(\sigma)|< |\calR(\alpha)||\calR(\beta)|\binom{\ell(\sigma)}{\ell(\alpha)}.
\]
Therefore, we have the desired inequality.
\end{proof}

The shuffling process and configuration subgraphs that are described in the lemmas and definition above gives us a way to divide our graphs up into copies of $G(\alpha)$ and $G(\beta)$. 

\begin{corollary}\label{cor:nice_shuffle_graphs}
    Let $\alpha, \beta, \sigma\in \mathfrak{S}_n$, with $u\in \calR(\alpha)$, $v\in \calR(\beta)$ and $uv\in \calR(\sigma)$ as described in Lemma \ref{breakingPerms} and Definition \ref{shuffledef}. 

    If we arrive at $C_I$ using only shuffles of type 1 and 2, then $H_I$ is a subgraph of $G(\alpha)\times G(\beta)$. $H_I$ does not need to be an induced subgraph. If a shuffle of type 1 from Definition \ref{shuffledef} exists in this configuration, then each of those edges from $G(\alpha)$ is replaced with $|\calR(\beta)|$ copies of the same edge. 
\end{corollary}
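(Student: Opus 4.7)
The plan is to verify the two claims by tracking how blue and red letters evolve through shuffles of type 1 and 2, then read off the edge structure.

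First I would set up a vertex identification $V(H_I) \hookrightarrow \calR(\alpha)\times \calR(\beta)$. Since only type 1 and type 2 shuffles are used to reach $C_I$, no letter ever changes color, and type 2 shuffles only slide a blue past a red without altering the internal ordering of either color. The internal order of blue (resp.\ red) letters is modified only by type 1 shuffles, which are commutation or braid moves within a single color; by Lemma \ref{permsSuffle} these take a reduced word of $\alpha$ (resp.\ $\beta$) to another. Hence for every $w \in C_I$ the blue subword is some $a \in \calR(\alpha)$ and the red subword is some $b \in \calR(\beta)$, and since the position set $I$ is fixed, the pair $(a,b)$ uniquely recovers $w$, giving the desired injection.

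Next I would classify the edges of $H_I$. Any edge of $G(\sigma)$ whose two endpoints both lie in $C_I$ must preserve the blue position set $I$. A type 2 shuffle slides a blue letter past a red one and therefore jumps to a different configuration, and type 3 shuffles are excluded by hypothesis, so every edge of $H_I$ is a type 1 shuffle. A type 1 move on blue letters changes $a \to a'$ along an edge of $G(\alpha)$ while fixing $b$, and thus maps to an edge $(a,b)-(a',b)$ of the Cartesian product $G(\alpha)\times G(\beta)$; the red case is symmetric. This verifies the first claim.

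The subgraph need not be induced because an edge $a \sim a'$ of $G(\alpha)$ requires its underlying commutation or braid relation to act on \emph{consecutive} letters of $a$, but when $a$ and $b$ are shuffled according to $I$, the two blue letters involved may be separated by red letters occupying intermediate positions in the complement of $I$; in that case the corresponding Cartesian edge is absent from $H_I$. For the multiplicity statement: whether a type 1 shuffle on blue letters is executable in $C_I$ depends only on whether those two blue letters occupy consecutive positions of the full word, which is determined by $I$ alone and not by the choice of $b$. Consequently, whenever such a shuffle is executable, the edge $(a,b)-(a',b)$ is realized for every $b \in \calR(\beta)$ with $(a,b)\in V(H_I)$, yielding $|\calR(\beta)|$ parallel copies of the same underlying edge of $G(\alpha)$ (and symmetrically for edges arising from $G(\beta)$). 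The main delicate point is precisely this adjacency bookkeeping, which separates the edges of $G(\alpha)\times G(\beta)$ that are realized in $H_I$ from those blocked by interposing letters of the opposite color.
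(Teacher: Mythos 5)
Your proposal is correct and follows the same reasoning the paper relies on: the paper states this corollary without a formal proof, leaving it as a consequence of Definition \ref{shufflesubgraph} and the surrounding discussion (that a type~1 move on the letters of $a\in\calR(\alpha)$ sitting in consecutive positions yields an edge for every choice of $b\in\calR(\beta)$, while moves blocked by interposed letters of the other color account for the subgraph failing to be induced). Your write-up simply makes that implicit argument explicit — the vertex injection into $\calR(\alpha)\times\calR(\beta)$, the observation that edges internal to $C_I$ must be type~1 shuffles, and the adjacency bookkeeping — so it matches the paper's approach.
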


\begin{lemma}\label{shufflelemma}
Let $\alpha, \beta, \sigma\in \mathfrak{S}_n$, with $u\in \calR(\alpha)$, $v\in \calR(\beta)$ and $uv\in \calR(\sigma)$ as described in Lemma \ref{breakingPerms} and Definition \ref{shuffledef}. 

If we arrive at $C_I$ using all three types of shuffles, $H_I$ is not necessarily a subgraph of $G(\alpha)\times G(\beta)$, because a type 3 shuffle changes $\alpha$ and $\beta$. 
\end{lemma}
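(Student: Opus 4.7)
The plan is to justify this ``not necessarily'' by isolating the conceptual obstruction introduced by type 3 shuffles and then exhibiting a small explicit case where it occurs. First, I would recall that the vertex set of $G(\alpha)\times G(\beta)$ is exactly $\calR(\alpha)\times\calR(\beta)$, so any subgraph inclusion $H_I\leq G(\alpha)\times G(\beta)$ forces every word in $C_I$, read with its blue/red coloring, to split into a blue subword lying in $\calR(\alpha)$ and a red subword lying in $\calR(\beta)$. The argument of Corollary \ref{cor:nice_shuffle_graphs} rested on exactly this invariant under type 1 and type 2 shuffles: a type 1 move rewrites a reduced word of $\alpha$ (or of $\beta$) into another reduced word of the same permutation, while a type 2 move only swaps the positions occupied by letters of different colors without altering either single-color subword.

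Next, I would analyze what a type 3 shuffle actually does. It applies the braid relation $s_j s_{j+1} s_j \leftrightarrow s_{j+1} s_j s_{j+1}$ at three consecutive positions whose colors form a bi-chromatic triple. The coloring at each position is preserved by convention, but the numerical values change: the two letters of one color that were $s_j$ become $s_{j+1}$, while the singleton of the other color that was $s_{j+1}$ becomes $s_j$. Reading off the blue letters after such a shuffle therefore yields a subword $a'$ that is in general a reduced expression for a different permutation $\alpha'\neq\alpha$, so $a'\notin\calR(\alpha)$, and the corresponding vertex of $H_I$ has no counterpart among the pairs in $\calR(\alpha)\times\calR(\beta)$ that constitute $V(G(\alpha)\times G(\beta))$.

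To make the failure concrete, I would revisit the example $\sigma=[165324]$ with splitting $\alpha=s_4s_5$ and $\beta=w_0^{(4,2)}=s_2s_3s_4s_2s_3s_2$ already used in the preceding example. Starting from ${\color{blue}45}{\color{red}234232}$, type 1 shuffles inside the red subword produce ${\color{blue}45}{\color{red}434234}$, and a type 3 shuffle applied to the braid triple $s_4 s_5 s_4$ at positions $1$--$3$ (colored blue-blue-red) then produces ${\color{blue}54}{\color{red}534234}$. Both vertices lie in $C_{\{1,2\}}$, so both are vertices of $H_{\{1,2\}}$, but the blue subword of the second is $54\notin\calR(\alpha)=\{45\}$, since $s_5 s_4\neq s_4 s_5$. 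Consequently $H_{\{1,2\}}$ contains a vertex that cannot be identified with any element of $\calR(\alpha)\times\calR(\beta)$, so $H_{\{1,2\}}$ is not a subgraph of $G(\alpha)\times G(\beta)$.

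The main obstacle will be the position-versus-value bookkeeping through the braid move: I must carefully verify that the colors at positions $1$, $2$, $3$ remain blue, blue, red after the braid, that the intermediate rewriting $234232\to 434234$ inside $\calR(\beta)$ is indeed achievable by a sequence of type 1 shuffles, and that the resulting blue subword $54$ genuinely fails to equal any reduced word for $\alpha$ rather than merely differing from the representative we started with.
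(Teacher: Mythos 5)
Your proposal is correct and follows essentially the same route as the paper: the proof there consists precisely of the observation that a type 3 shuffle converts $uv$ into a word $ab$ with $a\in\calR(\alpha')$, $b\in\calR(\beta')$ for a different factorization $\sigma=\alpha'\beta'$, so $H_I$ need not sit inside $G(\alpha)\times G(\beta)$. Your version additionally pins this down with the explicit witness ${\color{blue}54}{\color{red}534234}$ from the paper's own $\sigma=[165324]$ example (where $54\notin\calR(s_4s_5)$), which makes the ``not necessarily'' fully rigorous; the only nitpick is that your generic description of a type 3 move (``the two letters of one color that were $s_j$'') assumes the two same-colored letters occupy the outer positions of the braid triple, which is not the case in your example, though the concrete computation you give is handled correctly.
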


\begin{proof}
    As soon as a shuffle of type 3 has occurred, we no longer have a reduced word of the form $uv$ for $u\in \calR(\alpha)$ and $v\in \calR(\beta)$. Instead, we now have $ab$ where $a\in \calR(\alpha')$, and $\calR(\beta')$ where $\sigma = \alpha'\beta'$. The subgraphs with the vertices $ab$ are not required to have any relationship to vertices in $G(\alpha)\times G(\beta)$.
\end{proof}

The reason we want to look at this family of subgraphs is because we know a lot about $G(w_0^{(k,i)})$. However, for an arbitrary $\sigma$, we have less information about $G(\sigma s_j)$ for $j\in \Des(\sigma)$. 

\begin{lemma}\label{fullycombraid}
Let $\alpha, \beta \in \mathfrak{S}_n$. Then
\[
\sum_{v\in G(\alpha) \times G(\beta)}d_\calB(v) = |\calR(\alpha)|\cdot \sum_{u\in G(\beta) }d_\calB(u) + |\calR(\beta)|\cdot \sum_{t\in G(\alpha) }d_\calB(t) .
\]
\end{lemma}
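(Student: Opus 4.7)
The plan is to unpack the definition of the Cartesian product $G(\alpha) \times G(\beta)$ and then simply count braid-edge contributions by coordinate. Recall that a vertex of $G(\alpha) \times G(\beta)$ is a pair $(a,b)$ with $a \in V(G(\alpha))$ and $b \in V(G(\beta))$, and an edge joins $(a_1,b_1)$ to $(a_2,b_2)$ exactly when either $a_1 = a_2$ and $b_1b_2 \in E(G(\beta))$, or $b_1 = b_2$ and $a_1a_2 \in E(G(\alpha))$. Crucially, under this construction, an edge of the product is a braid edge if and only if it inherits that label from the factor in which it originates, since commutation vs.\ braid is a property of the underlying edge in $G(\alpha)$ or $G(\beta)$.

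First I would observe that, by this edge description, the braid edges incident to a fixed vertex $(a,b)$ split cleanly into two types: those of the form $(a,b)$--$(a,b')$ with $bb' \in E_\calB(G(\beta))$, and those of the form $(a,b)$--$(a',b)$ with $aa' \in E_\calB(G(\alpha))$. Hence
\[
d_\calB\bigl((a,b)\bigr) \;=\; d_\calB(a) + d_\calB(b),
\]
where on the right the first degree is taken in $G(\alpha)$ and the second in $G(\beta)$.

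Next I would sum this identity over all vertices of the product and separate the two pieces:
\[
\sum_{(a,b)} d_\calB\bigl((a,b)\bigr) \;=\; \sum_{a\in V(G(\alpha))} \sum_{b\in V(G(\beta))} d_\calB(a) \;+\; \sum_{a\in V(G(\alpha))} \sum_{b\in V(G(\beta))} d_\calB(b).
\]
In the first double sum, $d_\calB(a)$ is independent of $b$, so the inner sum contributes a factor of $|V(G(\beta))| = |\calR(\beta)|$; symmetrically in the second. This yields exactly
\[
|\calR(\beta)|\cdot \sum_{t\in G(\alpha)} d_\calB(t) \;+\; |\calR(\alpha)|\cdot \sum_{u\in G(\beta)} d_\calB(u),
\]
as claimed.

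I do not expect a genuine obstacle here; this is essentially a bookkeeping lemma about the Cartesian product of edge-labeled graphs. The only subtlety to flag is that the identity $d_\calB((a,b)) = d_\calB(a) + d_\calB(b)$ relies on the fact that the braid/commutation labeling in the product agrees coordinate-wise with the labelings in the factors, which is immediate from the product construction and does not require any information specific to reduced words of $\sigma = \alpha\beta$.
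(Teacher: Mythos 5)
Your proposal is correct and follows essentially the same route as the paper: both establish $d_\calB((a,b)) = d_\calB(a) + d_\calB(b)$ from the coordinate-wise edge description of the Cartesian product and then split the double sum, with the inner sums contributing the factors $|\calR(\alpha)|$ and $|\calR(\beta)|$. No gaps.
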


\begin{proof}
We consider $G(\alpha)\times G(\beta)$ as follows: let $v\in G(\alpha)\times G(\beta)$ be labeled as $(a,b)$ for $a\in \calR(\alpha)$ and $b\in \calR(\beta)$.

There is a braid edge between $(a,b)$ and $(a',b')$ if and only if there is a braid move between $a$ and $a'$ and $b=b'$, or if there is a braid move between $b$ and $b'$ and $a=a'$.

Therefore, the braid degree of $v\in G(\alpha)\times G(\beta)$ depends on the braid degrees of $a\in G(\alpha)$ and $b\in G(\beta)$. That is, $v=(a,b)$ is such that $d_\calB(v) =d_b^{G(\alpha)}(a) + d_b^{G(\beta)}(b) $. In order to calculate $\sum_{v\in G(\alpha) \times G(\beta)}d_\calB(v)$, we have to be able to vary over $a\in \calR(\alpha)$ and $b\in \calR(\beta)$.

Therefore, our calculation are
\begin{eqnarray*}
\sum_{v\in G(\alpha) \times G(\beta)}d_\calB(v) &=& \sum_{t\in G(\alpha)} \sum_{u\in G(\beta) } d_\calB(t) +d_\calB(u) \\
&=& \sum_{t\in G(\alpha)} \left ( \sum_{u\in G(\beta) } d_\calB(t) + \sum_{u\in G(\beta) }d_\calB(u) \right )\\
&=&  \sum_{t\in G(\alpha)} \left ( |\calR(\beta)| d_\calB(t) + \sum_{u\in G(\beta) }d_\calB(u) \right )\\
&=& \sum_{t\in G(\alpha)}|\calR(\beta)| d_\calB(t) +  \sum_{t\in G(\alpha)}\sum_{u\in G(\beta) }d_\calB(u)\\
&=& |\calR(\beta)|\cdot \sum_{t\in G(\alpha)} d_\calB(t) +  |\calR(\alpha)| \cdot \sum_{u\in G(\beta) }d_\calB(u),
\end{eqnarray*}
as desired
\end{proof}

\section{Bounds on the number of Braid Classes}\label{sec:motivation}

In this section, we heavily rely on two results we discussed in Section \ref{sec:background}: the main result from Fishel, Milićević, Patrias, and Tenner (Theorem \ref{bound1}\cite{F}), and the result from Reiner (Theorem \ref{ReinerBraid} \cite{R}).

After extensive work with examples for $n=4,5,6,7,8$, using Sage to help find the sizes of the sets of reduced words, we arrived at the following conjectures.

\begin{conjecture}\label{braidclassratio}
For all $\sigma\in \mathfrak{S}_n$,
\[
 \dfrac{1}{2} |\calR(\sigma)| \leq |\calB(\sigma )|\leq |\calR(\sigma)|.
\]
\end{conjecture}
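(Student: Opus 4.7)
The upper bound is immediate: the braid classes form a partition of $\calR(\sigma)$, so $|\calB(\sigma)| \leq |\calR(\sigma)|$, with equality exactly when no two reduced words are related by a braid move. The work is in the lower bound, which I would attack by counting braid edges. Each braid class is a connected component of the braid-only graph $G'_\calB(\sigma)$, and a graph on $V$ vertices with $E$ edges has at least $V - E$ components, so
\[
|\calB(\sigma)| \;\geq\; |\calR(\sigma)| - |E_\calB|.
\]
Hence the conjecture would follow from $|E_\calB| \leq |\calR(\sigma)|/2$, equivalently $\sum_{v \in G(\sigma)} d_\calB(v) \leq |\calR(\sigma)|$. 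For $\sigma = w_0$ this is exactly Theorem \ref{ReinerBraid} and is in fact an equality, so the conjecture is tight at the longest element, giving both the base case and strong evidence that the constant $1/2$ is optimal.

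The natural plan is induction on $\ell(\sigma)$ using Corollary \ref{recDEG}. Assuming $\sum_{u \in G(\sigma s_i)} d_\calB(u) \leq |\calR(\sigma s_i)|$ for every $i \in \Des(\sigma)$, and combining with Theorem \ref{redrec}, one obtains
\[
\sum_{v \in G(\sigma)} d_\calB(v) \;\leq\; |\calR(\sigma)| \;+\; 2\sum_{i,\, i+1 \in \Des(\sigma)} |\calR(\sigma s_i s_{i+1} s_i)|.
\]
The correction term is strictly positive whenever $\sigma$ has two consecutive descents, so the inductive bound overshoots $|\calR(\sigma)|$ and the pure edge-counting argument breaks down on its own. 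This is the main obstacle: every hexagonal interval $[\sigma s_i s_{i+1} s_i,\, \sigma]$ in the weak order contributes a block of new braid edges that the crude bound $|\calB(\sigma)| \geq V - E$ cannot absorb.

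To close the gap I would sharpen the forest bound to $|\calB(\sigma)| \geq V - E + \mu$, where $\mu$ is the cyclomatic number of $G'_\calB(\sigma)$, and then argue that the very hexagonal intervals producing the excess edges also produce enough independent cycles in $G'_\calB(\sigma)$ to compensate, cycle for edge. To make this bookkeeping local I would invoke the shuffle decomposition of Theorem \ref{thm:spliting_perms}, writing $\sigma = \alpha\,\beta_1 \cdots \beta_m$ with each $\beta_j = w_0^{(k_j, a_j)}$; inside each factor the conjecture reduces to Reiner's identity, and Corollary \ref{cor:nice_shuffle_graphs} describes how the copies of $G(\beta_j)$ embed in $G(\sigma)$ when only type-1 and type-2 shuffles are used. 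The delicate point, and likely the true source of the conjecture's difficulty, is controlling the type-3 shuffles, where letters from $\alpha$ and $\beta_j$ braid into each other so that both the block structure and the count of compensating cycles cease to be local. I would expect a full proof to proceed by a careful case analysis along the lines of the cases handled in Lemma \ref{casesproved} and Theorem \ref{mainresult}, extended across all descent configurations.
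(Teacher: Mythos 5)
The statement you were given is stated in the paper as a conjecture, not a theorem: the paper offers no proof of it, only partial results (Theorem \ref{weakerV} for $w_0$, and Lemma \ref{casesproved} together with Theorem \ref{mainresult} for a handful of descent configurations, each yielding the slightly weaker bound $|\calB(\sigma)|\geq \frac{1}{2}|\calR(\sigma)|-1$). Your proposal likewise does not prove it, and you are candid about that. What you do establish matches the paper's own strategy closely: the upper bound is immediate since braid classes partition $\calR(\sigma)$; the lower bound reduces, via the component bound $|\calB(\sigma)|\geq |\calR(\sigma)|-|E_\calB|$, to showing $\sum_{v}d_\calB(v)\leq|\calR(\sigma)|$, which is precisely the paper's Conjecture \ref{path1} (the statement $A(\sigma)\geq 0$); and the obstruction to the naive induction on Corollary \ref{recDEG} is exactly the correction term $2\sum_{i,i+1\in\Des(\sigma)}|\calR(\sigma s_is_{i+1}s_i)|$. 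One small point in your favor: for $\sigma=w_0$ your clean forest bound combined with Reiner's identity gives $|\calB(w_0)|\geq\frac{1}{2}|\calR(w_0)|$ outright, marginally sharper than the $-1$ loss in the paper's contradiction argument for Theorem \ref{weakerV}.

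The genuine gap is the final step. The claim that the hexagonal intervals $[\sigma s_is_{i+1}s_i,\sigma]$ contribute enough independent cycles to $G'_\calB(\sigma)$ to cancel the excess edges, ``cycle for edge,'' is asserted but not argued, and it is not obvious: by Proposition \ref{newEDGE} the new braid edges created by a consecutive pair of descents join two \emph{disjoint} induced subgraphs $G(\sigma s_i)$ and $G(\sigma s_{i+1})$, so there is no a priori reason each such edge closes a cycle in the braid-only graph rather than merging two previously distinct braid classes, which is exactly the case in which the cyclomatic correction $\mu$ contributes nothing. Likewise the type-3 shuffles, which you correctly flag, destroy the block structure on which any local bookkeeping would rest; the paper's Lemma \ref{casesproved} succeeds only in cases engineered to exclude them. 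So your proposal is a sound research plan that independently rediscovers the paper's reduction and its known special cases, but it does not settle the conjecture, which remains open in the paper as well.
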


\begin{conjecture}\label{commclassratio}
For all $\sigma\in \mathfrak{S}_n$, 
\[
0\leq |\calC(\sigma)| \leq \dfrac{1}{2}|\calR(\sigma)| + 1.
\]
\end{conjecture}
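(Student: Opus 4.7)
The lower bound $0 \le |\calC(\sigma)|$ is immediate since $\calC(\sigma)$ partitions the nonempty set $\calR(\sigma)$. For the upper bound, the plan is to reduce the statement to the companion Conjecture \ref{braidclassratio}. Theorem \ref{bound1} rearranges to
\[
|\calC(\sigma)| \le |\calR(\sigma)| - |\calB(\sigma)| + 1,
\]
so $|\calC(\sigma)| \le \tfrac{1}{2}|\calR(\sigma)| + 1$ would follow at once from the lower bound $|\calB(\sigma)| \ge \tfrac{1}{2}|\calR(\sigma)|$. The two conjectures are therefore coupled, and I would prove them together.

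To attack the required lower bound on $|\calB(\sigma)|$, I would pass through the braid graph $G'_\calB(\sigma)$ from Definition \ref{def:com_and_br_graphs}. Each braid class is a connected component of $G'_\calB(\sigma)$, and a connected subgraph on $k_i$ vertices needs at least $k_i - 1$ edges, so the total number of braid edges in $G(\sigma)$ satisfies $E_\calB(\sigma) \ge |\calR(\sigma)| - |\calB(\sigma)|$. Since $2\,E_\calB(\sigma) = \sum_v d_\calB(v)$, the bound $|\calB(\sigma)| \ge \tfrac{1}{2}|\calR(\sigma)|$ is equivalent to the handshake-style inequality
\[
\sum_{v \in G(\sigma)} d_\calB(v) \;\le\; |\calR(\sigma)|.
\]
For $\sigma = w_0$ this holds with equality by Theorem \ref{ReinerBraid}, which will serve as the base case.

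For general $\sigma$, my plan is to induct on $\ell(\sigma)$ using Corollary \ref{recDEG} paired with Theorem \ref{redrec}. The inductive hypothesis controls $\sum_{u \in G(\sigma s_i)} d_\calB(u) \le |\calR(\sigma s_i)|$ for each $i \in \Des(\sigma)$, and summing handles the first term of the recursion. The main obstacle is the correction $2\sum_{i, i+1 \in \Des(\sigma)} |\calR(\sigma s_i s_{i+1} s_i)|$: a naive induction yields $\sum_v d_\calB(v) \le |\calR(\sigma)| + 2\sum |\calR(\sigma s_is_{i+1}s_i)|$, which is too weak because this correction is strictly positive whenever $\sigma$ has adjacent descents. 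To close the gap, I would strengthen the inductive hypothesis to track the signed deficit $|\calR(\sigma)| - \sum_v d_\calB(v)$ and then exploit the shuffle decomposition of Theorem \ref{thm:spliting_perms}: inside each maximal block $[i, i+k-2] \subseteq \Des(\sigma)$, the factor $w_0^{(k,i)}$ is equivalent to $w_0 \in \mathfrak{S}_k$, where Reiner's equality absorbs exactly the $|\calR(\sigma s_is_{i+1}s_i)|$ terms arising from braid triples inside that block; isolated descents $i$ with $i \pm 1 \notin \Des(\sigma)$ contribute no correction at all, by Corollary \ref{recDEG}. The delicate step, and the one I expect to be the real difficulty, is matching the deficit picked up inside each $w_0^{(k,i)}$-block against both the correction terms and the edges between configurations $H_I$ in the shuffle partition, since by Lemma \ref{shufflelemma} type~3 shuffles prevent $H_I$ from cleanly factoring as $G(\alpha)\times G(\beta)$. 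Theorem \ref{weakerV} supplies the extremal case $\sigma = w_0$ and should provide the quantitative slack needed to make this book-keeping close.
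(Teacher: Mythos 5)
The statement you are asked to prove is one the paper itself leaves open: Conjecture~\ref{commclassratio} is never proved in the paper, and the author explicitly states in Section~\ref{sec:future} that the cases in Lemma~\ref{casesproved} ``represent all of our progress.'' Your reduction is nevertheless the right one and matches the paper's intended route: Theorem~\ref{bound1} gives $|\calC(\sigma)|\leq |\calR(\sigma)|-|\calB(\sigma)|+1$, the component-versus-spanning-tree count gives $|\calB(\sigma)|\geq |\calR(\sigma)|-\tfrac12\sum_v d_\calB(v)$, and so everything reduces to the inequality $\sum_{v}d_\calB(v)\leq|\calR(\sigma)|$, which is exactly the paper's Conjecture~\ref{path1}. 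Up to this point your argument is correct (and in fact slightly sharper than the paper's Theorem~\ref{weakerV}/Theorem~\ref{mainresult}, which only reach $|\calB(\sigma)|\geq\tfrac12|\calR(\sigma)|-1$ and hence $|\calC(\sigma)|\leq\tfrac12|\calR(\sigma)|+2$).

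The genuine gap is the inductive step you yourself flag as ``the real difficulty,'' and it does not close. The recursion of Corollary~\ref{recDEG} carries the strictly positive correction $2\sum_{i,i+1\in\Des(\sigma)}|\calR(\sigma s_is_{i+1}s_i)|$, so to push the induction through you would need the \emph{quantitative} statement
\[
\sum_{i\in\Des(\sigma)}A(\sigma s_i)\;\geq\;2\sum_{i,i+1\in\Des(\sigma)}|\calR(\sigma s_is_{i+1}s_i)|,
\]
whereas the inductive hypothesis $A(\tau)\geq 0$ gives no lower bound on the deficits at all; Proposition~\ref{w0andchildren} only controls how many children can be $w_0$-equivalent, not how large the surplus of the others is. Your proposed fix --- absorbing the correction terms into Reiner's equality inside each $w_0^{(k,i)}$-block of the shuffle decomposition --- is precisely what the paper manages to carry out only under very restrictive hypotheses (isolated descents, or $\alpha$ completely/fully commutative with support commuting with the block, as in Lemma~\ref{casesproved}), because once type~3 shuffles occur the configurations $H_I$ lose the product structure (Lemma~\ref{shufflelemma}) and the edge bookkeeping breaks down. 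So what you have is a correct reduction of Conjecture~\ref{commclassratio} to Conjecture~\ref{path1}, plus a heuristic for the latter, not a proof.
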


\subsection{The weak order lattice and the longest permutation}
Much like the other research done by Tenner \cite{T2} and Schilling et al. \cite{Sch}, we start with results related to $G(w_0)$.

\begin{theorem}\label{weakerV}
For $w_0\in \mathfrak{S}_n$, $|\calB(w_0)|\geq \dfrac{1}{2}|\calR(w_0)|-1$. 
\end{theorem}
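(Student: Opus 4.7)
The plan is to count the braid edges of $G(w_0)$ exactly via Reiner's theorem and then convert the resulting edge count into a lower bound on the number of connected components of $G'_\calB(w_0)$; by Definition~\ref{def:com_and_br_graphs}, these components are precisely the braid classes of $w_0$.

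First I would apply Theorem~\ref{ReinerBraid} to get $\sum_{v\in G(w_0)}d_\calB(v)=|\calR(w_0)|$. Handshake then yields
\[
|E_\calB|\;=\;\tfrac{1}{2}\sum_{v\in G(w_0)}d_\calB(v)\;=\;\tfrac{1}{2}|\calR(w_0)|.
\]
Here I use that $G'_\calB(w_0)$ is a simple graph on $V:=|\calR(w_0)|$ vertices: two distinct reduced words can be joined by at most one braid edge, because a braid move alters exactly three consecutive letters of the word and is thus determined by the position of the triple it acts on.

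Next I would invoke the elementary graph-theoretic inequality that any simple graph on $V$ vertices with $E$ edges has at least $V-E$ connected components: each component on $k$ vertices contributes at least $k-1$ edges via a spanning-tree argument, so $E\geq V-c$ where $c$ denotes the number of components. Applied to $G'_\calB(w_0)$, whose components are exactly the braid classes by Definition~\ref{def:com_and_br_graphs}, this yields
\[
|\calB(w_0)|\;\geq\;V-|E_\calB|\;=\;|\calR(w_0)|-\tfrac{1}{2}|\calR(w_0)|\;=\;\tfrac{1}{2}|\calR(w_0)|,
\]
which already dominates the stated bound $\tfrac{1}{2}|\calR(w_0)|-1$.

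I anticipate no substantive obstacle: the whole argument is a single application of Reiner's exact count combined with a spanning-forest component estimate, and the only small verification required is the simplicity of $G'_\calB(w_0)$, which is immediate from the positional description of the braid relation. The additive slack of $-1$ in the theorem statement appears unnecessary under this strategy and may be present either for cosmetic reasons (to mirror the general form of Conjecture~\ref{braidclassratio}) or to accommodate an alternative proof path through Theorem~\ref{bound1}, which would naturally introduce an additive constant when rearranging $|\calB(w_0)|+|\calC(w_0)|-1\leq|\calR(w_0)|$ against the multiplicative bound $|\calR(w_0)|\leq|\calB(w_0)|\cdot|\calC(w_0)|$.
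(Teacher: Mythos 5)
Your proposal is correct and uses essentially the same two ingredients as the paper's own proof — Reiner's exact degree count (Theorem~\ref{ReinerBraid}) and the spanning-forest inequality that a graph with $V$ vertices and $E$ edges has at least $V-E$ components — merely arranged as a direct argument rather than a proof by contradiction. Your arrangement in fact yields the marginally stronger bound $|\calB(w_0)|\geq\tfrac{1}{2}|\calR(w_0)|$, confirming your suspicion that the additive $-1$ is slack.
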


\begin{proof}
We consider the graph $G_b'(w_0)$, as defined in Definition \ref{def:com_and_br_graphs}. Recall that the connected components of this graph are the braid classes of $G(w_0)$. We would like to show that there are at least $ \dfrac{1}{2}|\calR(w_0)|-1$ connected components. 

Suppose to the contrary that there are less than $ \dfrac{1}{2}|\calR(w_0)|-1$ connected components in $G'_\calB(w_0)$. Then if $|\calR(w_0)|$ is even, we have at most $\dfrac{1}{2}|\calR(w_0)|-2$ components, and if $ |\calR(w_0)|$ is odd, then there are at most $ \dfrac{1}{2}|\calR(w_0)| -\dfrac{3}{2}$ components.  


We can partition our vertex set over the connected components. Suppose there are $m$ connected components, and let $V_i$ be the vertex set for one of these components, for $1\leq i\leq m<\dfrac{1}{2}|\calR(w_0)|-1$. Then, 
\[
V(G_b'(w_0)) = \bigcup_{1\leq i \leq m} V_i.
\]

We note that a graph with $V$ vertices and $k$ components must have at least $V-k$ edges, since a component with $V_i$ vertices must have a spanning tree with $V_i-1$ edges. We assume there are $m$ components, where $m\leq \dfrac{1}{2}|\calR(w_0)|-i$, for $i\in \left \{ \dfrac{3}{2}, 2 \right \}$. Thus, we can subtract off a maximum of $\dfrac{1}{2}|\calR(w_0)|-i$ edges.

We also see that each of the connected components in $G_b'(w_0)$ are at minimum tree graphs, which means we have at least
\[
\sum_{v\in V(G(w_0))} d_{G'_\calB(w_0)}(v) \geq 2|\calR(w_0)|-2.
\]

Thus, for $i\in \left \{\dfrac{3}{2}, 2 \right \}$, we have
\begin{eqnarray*}
\sum_{v\in V(G(w_0))} d_{G'_\calB(w_0)}(v) &=& 2E(G'_\calB(\sigma))\\
&\geq & 2|\calR(w_0)|-2 - 2\left [ \dfrac{1}{2}|\calR(w_0)| - i \right ] \\
&>& |\calR(w_0)|,
\end{eqnarray*}
for either choice of $i$.

This contradicts Reiner's result in Theorem \ref{ReinerBraid} that states that the sum of the braid degrees exactly equals the size of the set of reduced words. 
\end{proof}

Initially, we may only note that this theorem is true for $w_0\in \mathfrak{S}_n$. However, in Section \ref{sec:background}, Definition \ref{w0equiv}, we defined the family of $w_0^{(k,i)}$ permutations. Since these are permutations that also produce graphs of the form $G(w_0)$ for $w_0\in \mathfrak{S}_k$, the result is true for more than one permutation in $\mathfrak{S}_n$ when $n>3$.

\subsection{Bounds on the number of braid classes for certain families of permutations}

We now use our results from Sections \ref{sec:subgraphs_des} and \ref{sec:shuffle_sub} to prove results similar to Theorem \ref{weakerV} for more general families of permutations.

In the following lemma, we refer to certain $\alpha\in \Sym_n$ as being \textit{completely commutative} permutations. In this case we do not mean fully commutative, we simply mean that a reduced decomposition of $\alpha$ is formed from commuting generators. For example, $\sigma = s_1s_3s_5$ would be completely commutative, while $\tau = s_4 s_1s_2s_3$ would be fully commutative.

\begin{lemma}\label{casesproved}
Suppose that $\Des(\sigma) =  [i,i+k-1]\cup J$, where $k$ is the length of the longest string of consecutive elements in $\Des(\sigma)$. Then 
\begin{enumerate}
\item If $k=1$,
\[
A(\sigma) := |\calR(\sigma)|-\left (\sum_{v\in G(\sigma)}d_\calB(v) \right )>0.
\]
\item If $J=\{ j\}$ and $\sigma =s_j w_0^{(k,i)}$, then $A(\sigma)>0$.
\item If $J \neq \emptyset$, $\alpha$ is completely commutative with $\Des(\alpha) = J$, and $\sigma = \alpha w_0^{(k,i)}$, then $A(\sigma)>0$.
\item If $J \neq \emptyset$, $\alpha$ is fully commutative with $\Des(\alpha) = J$,  every $s_i$ in the support of $\alpha$ commutes with all $s_i$'s in the support of $w_0^{(k,i)}$, and $\sigma = \alpha w_0^{(k,i)}$, then $A(\sigma)>0$.
\item If $J=\emptyset$ and $\sigma = w_0^{(k,i)}$, then $A(\sigma)=0$.
\end{enumerate}
\end{lemma}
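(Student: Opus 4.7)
The plan is to push every case through a master recursion for $A(\sigma):=|\calR(\sigma)|-\sum_{v\in G(\sigma)}d_\calB(v)$ obtained by subtracting Corollary \ref{recDEG} from Theorem \ref{redrec}:
\[
A(\sigma)\;=\;\sum_{i\in\Des(\sigma)}A(\sigma s_i)\;-\;2\sum_{i,\,i+1\in\Des(\sigma)}|\calR(\sigma s_i s_{i+1} s_i)|.
\]
Case (5) drops out at once: $w_0^{(k,i)}$ is equivalent to $w_0\in\Sym_k$ by Definition \ref{w0equiv}, so Reiner's identity in Theorem \ref{ReinerBraid} transports to give $\sum_v d_\calB(v)=|\calR(\sigma)|$, i.e.\ $A(\sigma)=0$.

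For cases (2), (3), and (4), I would argue directly from the shuffle framework of Section \ref{subsec:shuffles}. The hypothesis that the consecutive block is the \emph{longest} forces $J$ to be disjoint from and non-adjacent to it, so every simple reflection in the support of $\alpha$ commutes with every one in the support of $w_0^{(k,i)}$. Lemma \ref{filteringlemma} then gives equality
\[
|\calR(\sigma)|=|\calR(\alpha)|\cdot|\calR(w_0^{(k,i)})|\cdot\binom{\ell(\sigma)}{\ell(\alpha)}.
\]
Every braid edge in $G(\sigma)$ must use three consecutive letters drawn entirely from the $w_0^{(k,i)}$-part, since the $\alpha$-letters contribute no braids and cannot interact with $w_0^{(k,i)}$-letters. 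Collapsing the braid triple into a single block and applying Reiner to $w_0^{(k,i)}$ yields
\[
\sum_{v\in G(\sigma)}d_\calB(v)=|\calR(\alpha)|\cdot|\calR(w_0^{(k,i)})|\cdot\binom{\ell(\sigma)-2}{\ell(\alpha)}.
\]
Subtracting produces $A(\sigma)=|\calR(\alpha)|\cdot|\calR(w_0^{(k,i)})|\cdot\bigl[\binom{\ell(\sigma)}{\ell(\alpha)}-\binom{\ell(\sigma)-2}{\ell(\alpha)}\bigr]$, which is strictly positive whenever $\ell(\alpha)\geq 1$, covering (2), (3), and (4) uniformly.

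Case (1) I would handle by induction on $\ell(\sigma)$. When $k=1$ the subtractive sum in the master recursion vanishes, so $A(\sigma)=\sum_{i\in\Des(\sigma)}A(\sigma s_i)$. The base $\ell(\sigma)=1$ gives $\sigma=s_j$ and $A(\sigma)=1$. For the inductive step, Proposition \ref{w0andchildren} guarantees that at most one descendant $\sigma s_i$ can be equivalent to any $w_0^{(k',i')}$, so at most one summand vanishes while the remaining $|\Des(\sigma)|-1$ are strictly positive by the inductive hypothesis. The only delicate subcase is $\Des(\sigma)=\{j\}$: here a direct descent calculation on $\tau:=\sigma s_j$ shows $\Des(\tau)\subseteq\{j-1,j+1\}$, a set that cannot be the descent set of any $w_0^{(k',i')}$ with $k'\geq 3$ (those descent sets are intervals of length $\geq 2$), so $A(\tau)>0$ by induction.

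The main obstacle is that this induction presupposes $A(\tau)\geq 0$ on every shorter $\tau$, not merely on those fitting cases (1)--(5). I expect this requires a companion nonnegativity lemma---``$A(\tau)\geq 0$ always, with equality only when $\tau$ is equivalent to some $w_0^{(k,i)}$ with $k\geq 3$''---proved by pairing each sum $A(\sigma s_i)+A(\sigma s_{i+1})$ against the corresponding $2|\calR(\sigma s_i s_{i+1} s_i)|$ subtraction via Proposition \ref{braidrec11}, using that the interval $[\sigma s_i s_{i+1} s_i,\sigma]$ itself shuffles with $w_0^{(3,i)}$. Carrying out that bookkeeping cleanly is where I expect the bulk of the technical work to sit.
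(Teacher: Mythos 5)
Your overall architecture matches the paper's: case (5) is dispatched by Theorem \ref{ReinerBraid}, cases (2)--(4) by the shuffle/configuration machinery of Section \ref{subsec:shuffles}, and case (1) by the descent recursion combined with Proposition \ref{w0andchildren}. Within cases (2)--(4) your execution is actually sharper than the paper's: the paper estimates $\sum_{v\in H_t}d_\calB(v)$ configuration by configuration and only concludes a strict inequality, whereas you count braid incidences globally by gluing the braid triple into a single block, obtaining the exact value $\sum_{v}d_\calB(v)=|\calR(\alpha)|\,|\calR(\beta)|\binom{\ell(\sigma)-2}{\ell(\alpha)}$ and hence a closed formula for $A(\sigma)$. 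This refinement is sound: maximality of the run $[i,i+k-1]$ does force $J$ to avoid $\{i-1,i+k\}$, so the cross-commutation you need holds in all three cases (in case (3) the support of a completely commutative $\alpha$ equals $\Des(\alpha)=J$, and in case (4) it is hypothesized outright), and the single-letter/completely/fully commutative hypotheses rule out braid moves among the $\alpha$-letters, so every braid triple really does sit inside the $w_0^{(k,i)}$-part.

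On case (1), the gap you flag is real, but you should know that the paper's own proof carries exactly the same gap: it opens with ``Suppose that for all $\tau$ with $\ell(\tau)<\ell$, either $\tau=w_0^{(k,i)}$ and $A(\tau)=0$, or $\tau\neq w_0^{(k,i)}$ and $A(\tau)>0$'' and never discharges that hypothesis --- this is precisely the companion nonnegativity statement you describe, i.e.\ Conjecture \ref{path1} together with a characterization of equality. Your observation that the recursion can escape the family covered by the lemma (a child of a permutation with $\Des=\{j-1,j+1\}$ can acquire consecutive descents and land outside cases (1)--(5)) is exactly why the assumption is not innocuous. So your proposal reproduces the paper's argument faithfully, improves the bookkeeping in cases (2)--(4), and is more candid than the paper about the unproven induction hypothesis in case (1); you have not missed an idea that the paper actually supplies.
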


\begin{proof}
First, we note that point five above is exactly Theorem \ref{ReinerBraid}. Thus we focus on proving the other three parts of our lemma.

Let $\sigma \in \mathfrak{S}_n$ where $l(\sigma) = l$ and $\Des(\sigma) =   [i,i+k-1] \cup J$ for some $k,i\in \mathbb{Z}$ such that no string of consecutive elements in $J$ has length greater than $k$. 

Suppose that for all $\tau \in \mathfrak{S}_n$ where $l(\tau)<l$, exactly one of the following is true:
\begin{enumerate}
\item $\tau = w_0^{(k,i)}$, and $A(\tau) = 0$,
\item $\tau \neq w_0^{(k,i)}$, and $A(\tau)>0$.
\end{enumerate}
We want to show that $A(\sigma)>0$ as well.

\textbf{Case 1:} Let $k=1$. That is, suppose that $\Des(\sigma)$ does not contain any consecutive elements. Then we select any $j\in \Des(\sigma)$, and consider the word $aj\in \calR(\sigma)$ where $a\in \calR(\sigma s_j)$.

If $\Des(\sigma) = \{j\}$, we note that $|\calR(\sigma)|=|\calR(\sigma s_j)|$. Then $A(\sigma) = A(\sigma s_j)>0$ by our induction hypothesis.

If $|\Des(\sigma)|>1$, Proposition \ref{w0andchildren} notes that at most one $j\in \Des(\sigma)$ produces $\sigma s_j = w_0^{(k,i)}$. Using Corollary \ref{recDEG}, we have
\begin{eqnarray*}
\sum_{v\in G(\sigma)} d_\calB(v) &=& \left ( \sum_{x\in \Des(\sigma)}\sum_{u\in G(\sigma s_x)}d_\calB(u) \right )+ 2 \cdot \sum_{x,x+1\in \Des(\sigma)}|\calR(\sigma s_xs_{x+1}s_x)| \\
&~& \\
&=& \sum_{x\in \Des(\sigma)}\sum_{u\in G(\sigma s_x)}d_\calB(u) ,
\end{eqnarray*}
because there are no consecutive elements in the set $\Des(\sigma)$.

We assume that $A(\sigma s_x)=0$ for at most one descent $x$, and $A(\sigma s_y)>0$ for all other descents $y$. There are at least two elements in $\Des(\sigma)$, and therefore,
\begin{eqnarray*}
\sum_{v\in G(\sigma)} d_\calB(v) &=& \sum_{x\in \Des(\sigma)}\sum_{u\in G(\sigma s_x)}d_\calB(u) \\
&< &  \sum_{x\in \Des(\sigma)}\left ( |\calR(\sigma s_x)|\right )\\
&=& |\calR(\sigma)|,
\end{eqnarray*}
and therefore, $A(\sigma)>0$.


\textbf{Case 2:} Let $\ell(\alpha) = 1$. Suppose that $\alpha = s_j$, $\beta =  w_0^{(k,i)}$, so that $j v \in \calR(\sigma)$ for $v\in \calR(w_0^{(k,i)})$. Because $\Des(\sigma) = [i,i+k-1] \cup \{j\}$, we know that $j\notin \{i-1, i+k\}$. Then $j$ commutes with every letter in $u\in \calR(\beta)$, so we can split up $G(\sigma)$ into subgraphs $H_t$ as defined in Definition \ref{shufflesubgraph}, where $1\leq t\leq \ell(\beta)+1$.

Since $j$ commutes with everything in $u\in \calR(\beta)$, we know that $j$ cannot be used in a braid move, and that $|V(H_t)| = |\calR(\beta)|$. Thus each of the $H_t$'s is joined to another $H_{t-1}$ and $H_{t+1}$ by commutation edges.

In any three consecutive positions in $\beta$, there are $\sum_{m,m+1 \in \Des(\beta)}|\calR(\beta s_ms_{m-1}s_m)|$ braid moves, corresponding to braid edges. Since $\beta$ is a $w_0^{(k,i)}$, using notation from Definition \ref{def:com_and_br_graphs}, we also know that 
\[
|\calR(\beta)| = 2\cdot |E_\calB(\beta)| = \sum_{v\in G(\beta)}d_\calB(v),
\]
which means that
\[
|\calR(\beta)|=2\cdot \left ( (l(\beta) -2) \sum_{m,m+1 \in \Des(\beta)}|\calR(\beta s_ms_{m-1}s_m)|\right ) = \sum_{v\in G(\beta)} d_\calB(v).
\]

For $t=1, l(\beta)+1$, the subgraph $H_t$ has exactly $|\calR(\beta)|$ vertices, and $\sum_{v\in H_t}d_\calB(v) = |V(H_t)|$.

For $t=2, l(\beta)$, the subgraph $H_t$ has exactly $|\calR(\beta)|$ vertices, but 
\[
\sum_{v\in H_t} d_\calB(v) = 2\cdot \left ( (l(\beta) -3) \sum_{m,m+1 \in \Des(\beta)}|\calR(\beta s_ms_{m-1}s_m)|\right ) <|\calR(\beta)|.
\]

For all other $3\leq t \leq l(\beta)-1$, the subgraph $H_t$ has exactly $|\calR(\beta)|$ vertices, but 
\[
\sum_{v\in H_t} d_\calB(v) = 2\cdot \left ( (l(\beta) -4) \sum_{m,m+1 \in \Des(\beta)}|\calR(\beta s_ms_{m-1}s_m)|\right ) <|\calR(\beta)|.
\]

Thus 
\[
\sum_{v\in G(\sigma)} d_\calB(v) < |\calR(\sigma)|,
\]
so that $A(\sigma)>0$ as desired.


\textbf{Case 3 and 4:} Let $\ell(\alpha) = m$. Let $\alpha = s_{a_1} \ldots s_{a_m}$, $\beta =  w_0^{(k,i)}$, so that $a_1\ldots a_m v \in \calR(\sigma)$ for $v\in \calR(w_0^{(k,i)})$. Because $\Des(\sigma) = [i,i+k-1] \cup J$, we know that $a_j\notin \{i-1, i+k\}$ for any $1\leq j \leq m$. Then $a_j$ commutes with every letter in $u\in \calR(\beta)$, so we can split up $G(\sigma)$ into subgraphs $H_t$ as defined in Definition \ref{shufflesubgraph}, where $1\leq t\leq \binom{\ell (\alpha) +\ell (\beta )}{\ell(\alpha)}$.

Since each $a_j$ commutes with everything in $u\in \calR(\beta)$, and $\alpha$ is fully commutative, we know that each $a_j$ cannot be used in a braid move, and that $|V(H_t)| = |\calR(\alpha)|\cdot|\calR(\beta)|$. Thus each of the $H_t$'s is joined to another $H_{t-1}$ and $H_{t+1}$ by commutation edges. Additionally, each of the graphs $H_t$ are a subgraph of $G(\alpha)\times G(\beta)$, where $V(H_t) = V(G(\alpha)\times G(\beta))$, and $E(H_t)\subset E(G(\alpha)\times G(\beta))$. We also know from \ref{fullycombraid} that the only braid edges are in the copies of $G(\beta)$.

Using the same argument as Case 2, we have the following facts about our subgraphs $H_t$: 

For any subgraph $H_t$ where the letters of $\beta$ sit in consecutive positions, $H_t$ has $|\calR(\alpha)||\calR(\beta)|$ vertices, and $\sum_{v\in H_t}d_\calB(v) = |V(H_t)|$. 

For any subgraph $H_t$ where at least two letters of $\beta$ are separated by at least one letter of $\alpha$, $H_t$ still has exactly $|\calR(\alpha)||\calR(\beta)|$ vertices, but $ \sum_{v\in H_t} d_\calB(v)  <|\calR(\alpha)||\calR(\beta)|$
Thus 
\[
\sum_{v\in G(\sigma)} d_\calB(v) < |\calR(\sigma)|,
\]
so that $A(\sigma)>0$ as desired.
\end{proof}

Note that the cases we have proven in Lemma \ref{casesproved} relied heavily on the work we did in Sections \ref{sec:subgraphs_des} and \ref{sec:shuffle_sub}. This Lemma also covers all the cases we have been able to prove.


\begin{theorem}\label{mainresult}
Let $\sigma \in \mathfrak{S}_n$ be one of the permutations covered by Lemma \ref{casesproved}. Then 
\[|\calB(\sigma)| \geq \dfrac{1}{2}|\calR(\sigma)|-1 \quad \mbox{ and } \quad |\calC(\sigma)| \leq \dfrac{1}{2}|\calR(\sigma)| +2.
\]
\end{theorem}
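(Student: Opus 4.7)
The plan is to derive the braid-class lower bound by adapting the contradiction argument of Theorem \ref{weakerV}, and then obtain the commutation-class upper bound as an immediate corollary of Theorem \ref{bound1}. The crucial input is Lemma \ref{casesproved}: in every listed case we have $A(\sigma) \geq 0$, i.e.
\[
\sum_{v \in V(G(\sigma))} d_\calB(v) \;\leq\; |\calR(\sigma)|,
\]
with equality only when $\sigma = w_0^{(k,i)}$ (case 5). So the setup of Theorem \ref{weakerV} generalizes verbatim, only with $\leq$ in place of the Reiner equality.

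First I would bound $|\calB(\sigma)|$ from below. Assume toward a contradiction that $|\calB(\sigma)| < \tfrac{1}{2}|\calR(\sigma)| - 1$. Since $|\calB(\sigma)|$ is an integer, a short parity split shows that the number of connected components $m$ of $G'_\calB(\sigma)$ satisfies either $m \leq \tfrac{1}{2}|\calR(\sigma)| - 2$ (when $|\calR(\sigma)|$ is even) or $m \leq \tfrac{1}{2}|\calR(\sigma)| - \tfrac{3}{2}$ (when odd). Any graph on $V$ vertices with $m$ components contains at least $V-m$ edges, because each component has a spanning tree. Applied to $G'_\calB(\sigma)$, whose components are precisely the braid classes,
\[
\sum_{v \in V(G(\sigma))} d_\calB(v) \;=\; 2\,|E(G'_\calB(\sigma))| \;\geq\; 2(|\calR(\sigma)| - m) \;>\; |\calR(\sigma)|,
\]
contradicting the degree inequality supplied by Lemma \ref{casesproved}. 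Hence $|\calB(\sigma)| \geq \tfrac{1}{2}|\calR(\sigma)| - 1$.

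Then I would combine this with Theorem \ref{bound1}, which states $|\calB(\sigma)| + |\calC(\sigma)| - 1 \leq |\calR(\sigma)|$. Rearranging and substituting the braid bound just proved,
\[
|\calC(\sigma)| \;\leq\; |\calR(\sigma)| - |\calB(\sigma)| + 1 \;\leq\; |\calR(\sigma)| - \bigl(\tfrac{1}{2}|\calR(\sigma)| - 1\bigr) + 1 \;=\; \tfrac{1}{2}|\calR(\sigma)| + 2,
\]
which is exactly the claimed commutation-class upper bound.

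The main obstacle has essentially been absorbed into Lemma \ref{casesproved}: once the inequality $\sum d_\calB(v) \leq |\calR(\sigma)|$ is secured for the five families, the remainder is just the Theorem \ref{weakerV} contradiction template plus a one-line application of Theorem \ref{bound1}. The only subtlety to verify is the integer parity bookkeeping in the contradiction step, and to remember that the $w_0^{(k,i)}$ case (where equality holds in Lemma \ref{casesproved}) still produces $\sum d_\calB(v) = |\calR(\sigma)|$, which is exactly the hypothesis used in Theorem \ref{weakerV}, so no separate argument is needed for it.
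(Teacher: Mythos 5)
Your proposal is correct and follows essentially the same route as the paper: rerun the spanning-tree/degree-sum contradiction from Theorem \ref{weakerV} with the Reiner equality replaced by the inequality $\sum_{v} d_\calB(v) \leq |\calR(\sigma)|$ guaranteed by Lemma \ref{casesproved}, then feed the resulting braid bound into Theorem \ref{bound1}. If anything, you are slightly more careful than the paper in noting explicitly that the $w_0^{(k,i)}$ case, where $A(\sigma)=0$, still yields the needed contradiction because the derived inequality is strict.
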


\begin{proof}
We can follow all the steps of the proof of Lemma \ref{weakerV}, replacing $w_0$ with $\sigma$. The proof is dependent on arriving at 
\[
\sum_{v\in G(w_0)} d_\calB(v) > |\calR(w_0)|,
\]
which is a contradiction for $w_0$. If $\sigma$ is one of the permutations in Lemma \ref{casesproved}, then $A(\sigma)>0$, and this is also a contradiction for $\sigma$. 

Thus $|\calB(\sigma)| \geq \dfrac{1}{2} |\calR(\sigma)| -1$. By Theorem \ref{bound1}, 
\[
|\calB(\sigma)|+|\calC(\sigma)|-1 \leq |\calR(\sigma)|.
\]
We can now rewrite this as 
\[
\dfrac{1}{2}|\calR(\sigma)|-1+|\calC(\sigma)|-1 \leq |\calR(\sigma)|,
\]
which means that
\[
|\calC(\sigma)| \leq \dfrac{1}{2}|\calR(\sigma)| +2,
\]
as desired.
\end{proof}

While not complete for arbitrary permutations, this theorem represents all current knowledge of the relationship between $|\calB(\sigma)|$ and $|\calR(\sigma)|$. We used all of our new tools related to subgraphs to arrive at this result, and believe that additional work on our new families of subgraphs are required to prove any more cases.

\section{Future Work}\label{sec:future}
Finally, we detail some future directions of study for the work done in this paper.

\begin{enumerate}
\item There are two possible results that would prove Theorem \ref{mainresult} for all permutations:

\begin{conjecture}\label{path1}
    For any $\sigma \in \Sym_n$, \[
    A(\sigma)= |\calR(\sigma)|-\left (\sum_{v\in G(\sigma)}d_\calB(v) \right ) \geq 0.
    \]
\end{conjecture}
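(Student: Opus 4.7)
The plan is to prove Conjecture \ref{path1} by strong induction on $\ell(\sigma)$. Combining Corollary \ref{recDEG} with Theorem \ref{redrec} gives the clean recursion
\[
A(\sigma) \;=\; \sum_{i \in \Des(\sigma)} A(\sigma s_i) \;-\; 2 \sum_{i,\, i+1 \in \Des(\sigma)} |\calR(\sigma s_i s_{i+1} s_i)|,
\]
so the inductive step reduces to showing that the positive contributions $\sum A(\sigma s_i)$ dominate the penalty term coming from each consecutive descent pair. The base cases $\ell(\sigma) \leq 2$ are immediate, as such permutations admit no braid triples.

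For the inductive step, I would split on the descent set of $\sigma$. When $\Des(\sigma)$ contains no consecutive pair, the penalty term vanishes and the recursion gives $A(\sigma) = \sum_i A(\sigma s_i) \geq 0$ by the inductive hypothesis; this recovers case (1) of Lemma \ref{casesproved}. When consecutive descents are present, I would aim to establish the following strengthened local claim by a parallel induction: for every maximal block $B = [i, i+k-1] \subset \Des(\sigma)$ of length $k \geq 2$,
\[
\sum_{j \in B} A(\sigma s_j) \;\geq\; 2 \sum_{j,\, j+1 \in B} |\calR(\sigma s_j s_{j+1} s_j)|.
\]
Since descents outside any block contribute nonnegatively to $A(\sigma)$ and incur no penalty, summing the block inequality over all maximal blocks would yield $A(\sigma) \geq 0$. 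To prove the block inequality, I would invoke the shuffle decomposition of Theorem \ref{thm:spliting_perms} and Corollary \ref{cor:minimalperm} to factor $\sigma$ as a prefix times a product of $w_0^{(|B_\ell|+1,\, b_\ell)}$ pieces, then express both $|\calR(\sigma s_j)|$ and its braid-degree sum via the configuration subgraphs $H_I$ of Definition \ref{shufflesubgraph}, keeping careful track of which braid edges are inherited from the internal structure of each $w_0$-factor (where Reiner's Theorem \ref{ReinerBraid} is tight) and which are lost to interactions with the prefix or with the other $w_0$-factors.

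The main obstacle is handling the configurations $H_I$ that arise from type~3 shuffles (Lemma \ref{shufflelemma}): in those configurations $H_I$ is no longer a subgraph of a product $G(\alpha) \times G(\beta)$, so the clean braid-edge count of Lemma \ref{fullycombraid} breaks down, and one must argue directly that the ``lost'' edges leave enough slack in $A(\sigma s_j)$ to pay for the penalty. A secondary difficulty stems from Proposition \ref{w0andchildren}: at most one cover $\sigma s_j$ with $j \in B$ can itself be of the form $w_0^{(k', j')}$ (in which case $A(\sigma s_j) = 0$), but the block inequality must still hold in this ``Reiner-tight'' corner, forcing the required slack to come entirely from the other covers in $B$. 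I expect the hardest subcase to be when $B$ has length at least three and the Reiner-tight cover sits in the interior of $B$, so that the positive contribution has to be extracted entirely from the boundary covers; a potential-function argument tracking a quantity such as $|\calR(\sigma s_j)| - 2|\calR(\sigma s_j s_{j+1} s_j)|$ as $j$ sweeps across $B$ seems the most promising route, with the shuffle-subgraph machinery of Section \ref{subsec:shuffles} providing the combinatorial estimates on each factor.
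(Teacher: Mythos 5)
The first thing to say is that the paper contains no proof of this statement: it appears only as Conjecture \ref{path1} in the Future Work section, and the author explicitly records that the cases of Lemma \ref{casesproved} represent the full extent of progress toward it. So there is no paper argument to compare yours against, and your proposal must stand on its own. Its opening reduction is correct and worth keeping: combining Theorem \ref{redrec} with Corollary \ref{recDEG} really does give
\[
A(\sigma)=\sum_{i\in\Des(\sigma)}A(\sigma s_i)-2\sum_{i,\,i+1\in\Des(\sigma)}|\calR(\sigma s_is_{i+1}s_i)|,
\]
and since every consecutive pair of descents lies inside a single maximal block, the conjecture would follow from your block inequality together with $A(\sigma s_j)\geq 0$ on singleton blocks. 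This cleanly recovers case (1) of Lemma \ref{casesproved} and correctly isolates where the difficulty lives.

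The genuine gap is that the block inequality is never proved, and it is essentially the whole conjecture. The inductive hypothesis $A(\tau)\geq 0$ for shorter $\tau$ gives no lower bound on $\sum_{j\in B}A(\sigma s_j)$ beyond zero, while the penalty $2\sum_{j,\,j+1\in B}|\calR(\sigma s_js_{j+1}s_j)|$ is strictly positive whenever $|B|\geq 2$; you therefore need a quantitative strengthening of the induction hypothesis, and you never state what that strengthening is. Worse, the inequality must hold with equality when $\sigma=w_0^{(k,i)}$ (since $A(w_0^{(k,i)})=0$ by Theorem \ref{ReinerBraid}; one can check directly for $w_0\in\Sym_3$ and $\Sym_4$ that both sides coincide), so there is no slack available for crude estimates --- any proof of the block inequality must be exact in that corner. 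The two obstacles you name, type-3 shuffles destroying the product structure of the configuration subgraphs (Lemma \ref{shufflelemma}) and the unique Reiner-tight cover $\sigma s_j=w_0^{(k',j')}$ sitting in the interior of $B$, are precisely the places where the paper's machinery (Lemma \ref{fullycombraid}, Corollary \ref{cor:nice_shuffle_graphs}) stops applying, and you offer only a proposed route (a potential-function argument) rather than an argument. What you have is a sensible research plan that identifies the hard cases; the statement remains open after your write-up exactly as it does in the paper.
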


\begin{conjecture}\label{path2}
    For all $\sigma \in \Sym_n$, when $\sigma = w_0^{(k,i)}$ for some $k,i\in \N$, then $G(\sigma)$ has the highest proportion of braid edges to vertices in any graph of $\calR(\sigma)$ in $\Sym_n$.
\end{conjecture}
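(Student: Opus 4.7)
The plan is to prove Conjecture \ref{path2} by first reducing it to Conjecture \ref{path1} and then establishing the latter by strong induction on $\ell(\sigma)$. Since $\sum_{v} d_\calB(v) = 2|E_\calB(\sigma)|$, the ratio $|E_\calB(\sigma)|/|\calR(\sigma)|$ equals exactly $1/2$ for every $\sigma$ equivalent to a $w_0^{(k,i)}$, by Theorem \ref{ReinerBraid} applied to the embedded copy of $w_0$ in $\Sym_k$. Hence the conjecture reduces to showing $A(\sigma) \geq 0$ for all $\sigma$. The algebraic engine of the induction is the identity
\[
A(\sigma) \;=\; \sum_{i \in \Des(\sigma)} A(\sigma s_i) \;-\; 2 \sum_{i,\,i+1 \in \Des(\sigma)} |\calR(\sigma s_i s_{i+1} s_i)|,
\]
obtained by subtracting Corollary \ref{recDEG} from Theorem \ref{redrec}. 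The inductive hypothesis I would adopt is the strengthened statement that $A(\tau) \geq 0$, with equality iff $\tau$ is equivalent to some $w_0^{(k,j)}$.

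First I would dispose of the easy regime, in which $\Des(\sigma)$ has no two consecutive elements: the subtraction vanishes and induction closes the case, generalizing Case (1) of Lemma \ref{casesproved}. The substantive work is the hard regime, where $\Des(\sigma)$ has a maximal consecutive block of length at least $2$. Here I would apply Theorem \ref{thm:spliting_perms} to factor $\sigma = \alpha \beta_1 \cdots \beta_m$ with each $\beta_j$ a copy of $w_0^{(k_j, a_j)}$, and partition $V(G(\sigma))$ into configuration subgraphs $H_I$ from Definition \ref{shufflesubgraph}. For any $H_I$ reached by shuffles of type 1 and 2 only, Corollary \ref{cor:nice_shuffle_graphs} embeds $H_I$ into $G(\alpha) \times G(\beta_1) \times \cdots \times G(\beta_m)$. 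Iterating Lemma \ref{fullycombraid} and using $A(\beta_j) = 0$ together with the inductive bound $A(\alpha) \geq 0$ then gives the configuration-local inequality $\sum_{v \in H_I} d_\calB(v) \leq |V(H_I)|$, with strict slack whenever $\alpha$ itself is not equivalent to some $w_0^{(k,i)}$.

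The main obstacle is the treatment of type-3 shuffles, which occur whenever a generator $s_j$ in the support of $\alpha$ (or of one of the $\beta_l$) has index differing by $1$ from an index at the boundary of some $\beta_l$. Type-3 shuffles create braid moves mixing letters of different factors, producing extra braid edges not accounted for by the $G(\alpha) \times G(\beta_1) \times \cdots \times G(\beta_m)$ model. However, they also place vertices in configurations where fewer words of the associated $A_I$ are reduced (Lemma \ref{filteringlemma}). The heart of the argument is to construct an injection from each such extra cross-factor braid edge into a corresponding non-reduced would-be word in $A_I \setminus C_I$. I expect this injection to be built locally within each braid triangle $[\tau, \sigma]$ with $\tau^{-1}\sigma = w_0^{(3,x)}$, mirroring the tight arguments of Cases (2)--(4) of Lemma \ref{casesproved}. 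Making this matching globally consistent and disjoint across overlapping triangles---especially when $\Des(\sigma)$ has long consecutive blocks of length at least $3$, where the $w_0^{(3,x)}$ triangles chain together---is the real difficulty, and is likely where new combinatorial ideas will be needed.
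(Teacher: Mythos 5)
The statement you are trying to prove is Conjecture~\ref{path2}, which the paper leaves open: it appears in the Future Work section, and the author states explicitly that the cases in Lemma~\ref{casesproved} ``represent all of our progress,'' so there is no proof in the paper to compare against. Your reduction is sound as far as it goes: Reiner's Theorem~\ref{ReinerBraid} does give $2|E_\calB(w_0^{(k,i)})| = |\calR(w_0^{(k,i)})|$ (at least for $k\geq 3$; note that for $k=2$ the graph has one vertex and no braid edges, so your strengthened hypothesis ``$A(\tau)=0$ iff $\tau$ is equivalent to some $w_0^{(k,j)}$'' needs the caveat $k\geq 3$), the identity $A(\sigma) = \sum_{i\in\Des(\sigma)} A(\sigma s_i) - 2\sum_{i,i+1\in\Des(\sigma)}|\calR(\sigma s_i s_{i+1} s_i)|$ follows correctly from Theorem~\ref{redrec} and Corollary~\ref{recDEG}, and the no-consecutive-descents regime does close by induction exactly as in Case (1) of Lemma~\ref{casesproved}.

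The genuine gap is the one you name yourself: the treatment of type-3 shuffles. Everything you prove rigorously lives in the regime where every letter of $\alpha$ commutes with every letter of each $\beta_j$, which is precisely the hypothesis of Cases (2)--(4) of Lemma~\ref{casesproved}; there the configuration subgraphs $H_I$ contain all braid edges of $G(\sigma)$ and the product-graph bound via Lemma~\ref{fullycombraid} applies. Once a generator in the support of $\alpha$ is adjacent in index to the boundary of some $\beta_l$, braid edges appear \emph{between} configurations, so the configuration-local inequality $\sum_{v\in H_I} d_\calB(v)\leq |V(H_I)|$ (with $d_\calB$ computed in $H_I$) no longer controls $\sum_{v\in G(\sigma)} d_\calB(v)$, and the proposed injection from cross-factor braid edges into $A_I\setminus C_I$ is not constructed --- Lemma~\ref{filteringlemma} only bounds $|\calR(\sigma)|$ from above in the no-type-3 setting and gives no pairing between missing words and extra edges. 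Since the recursion also cannot be closed directly (the subtracted term $2\sum|\calR(\sigma s_i s_{i+1}s_i)|$ must be absorbed by the $A(\sigma s_i)$'s, and for $\sigma s_i$ equivalent to a $w_0^{(k,j)}$ with $k\geq 3$ the corresponding summand contributes zero slack), the central inequality $A(\sigma)\geq 0$ remains unproven, and with it Conjecture~\ref{path2}. What you have is a correct reduction of Conjecture~\ref{path2} to Conjecture~\ref{path1} plus a reproof of the known cases, not a proof of the statement.
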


 We have spent considerable time attempting to prove these conjectures, but as of the writing of this paper, the cases present in Lemma \ref{casesproved} represents all of our progress on Theorem \ref{mainresult}. 

 \item Theorem \ref{thm:spliting_perms} allows us to break permutations into more parts than we consider in Lemma \ref{casesproved}. That is, more parts that are equivalent to $w_0^{(k,i)}$'s. Can we add cases to Lemma \ref{casesproved} using a more general division of $\sigma$?

\item If Theorem \ref{mainresult} can be proven for all permutations, we still have Conjectures \ref{braidclassratio} and \ref{commclassratio}, which we believe are the true limits of these ratios. Are there other methods of proving these conjectures without our work on subgraphs?

\item Since these families of subgraphs have not been studied, we intend to continue to investigate what they can tell us about $\calR(\sigma)$ apart from congruence classes and number of edges in $G(\sigma)$.

\item As noted in Section \ref{subsec:shuffles}, 12 and 21 inflations have been well studied. Since shuffles are a more general method of looking at permutations, we continue to study what they can tell us about $\calR(\sigma)$ and $G(\sigma)$.

\end{enumerate}

\section*{Acknowledgements}

Most of this research was done as part of the author's PhD dissertation at Arizona State University. The author would like to thank her advisor, Susanna Fishel, for advice and support related to the writing of this document. The author would like to thank J. Carlos Mart\'inez Mori for the development of Figure \ref{fig:S4}. The author would also like to thank Samantha Dahlberg and Bridget Tenner for answering questions on related problems in research on sets of reduced words.

\bibliographystyle{plain}
\bibliography{bibliography}

\end{document}